\newtheorem{thm}{Theorem}
\newtheorem{lem}[thm]{Lemma}
\newtheorem{prop}[thm]{Proposition}
\theoremstyle{definition}
\newtheorem{defn}[thm]{Definition}
\newtheorem{prob}[thm]{Problem}
\newcommand{\CPb}{\overline{\mathbb{CP}}{}^{2}}
\newcommand{\CP}{{\mathbb{CP}}{}^{2}}
\newcommand{\PP}{{\mathbb{CP}}{}^{1}}
\newcommand{\R}{\mathbb{R}}
\newcommand{\Z}{\mathbb{Z}}
\begin{document}

\title[Unique fiber sum decomposability of genus 2 Lefschetz fibrations]
{Unique fiber sum decomposability\\ of genus 2 Lefschetz fibrations\\}

\author{Jun-Yong Park}

\address{School of Mathematics, University of Minnesota, Minneapolis, MN, 55455, USA}

\email{junepark@math.umn.edu}

\date{October 7th, 2015.}

\subjclass[2010]{Primary 57R55; Secondary 57R17}

\keywords{symplectic 4-manifold, Lefschetz fibration, mapping class group, lantern relation, rational blowdown, fiber sum decomposability}

\begin{abstract}

By applying the lantern relation substitutions to the positive relation of the genus two Lefschetz fibration over $\mathbb{S}^{2}$. We show that $K3\#2\CPb$ can be rationally blown down along seven disjoint copies of the configuration $C_2$. We compute the Seiberg-Witten invariant of the resulting symplectic 4-manifolds, and show that they are symplectically minimal. We also investigate how these exotic smooth 4-manifolds constructed via lantern relation substitution method are fiber sum decomposable. Furthermore by considering all the possible decompositions for each of our decomposable exotic examples, we will find out that there is a uniquely decomposing genus 2 Lefschetz fibration which is not a self sum of the same fibration up to diffeomorphism on the indecomposable summands.

\end{abstract}

\maketitle

\section{Introduction}

\par A nice interplay between the algebra and the topology in the Lefschetz fibration of a symplectic $4$-manifold is that the topological surgery operation that generates many interesting examples of an exotic smooth 4-manifold can be performed algebraically via monodromy substitution. One of the well understood mapping class group relation in this regard is the lantern relation which corresponds to the surgical operation of rational blowdown which gives us many interesting examples of exotic smooth 4-manifolds \cite{FS1, EG}.
In Endo-Gurtas' pioneering work, after constructing an exotic smooth 4-manifold $E$ homeomorphic but not diffeomorphic to an elliptic fibration on $E(1)= \CP\#9\CPb$ in Example 5.3 \cite{EG} via the lantern relation substitutions, they pose a problem about whether the exotic smooth 4-manifold $E$ constructed via monodromy substitution is fiber sum decomposable into a nontrivial fiber sum of other Lefschetz fibrations.

\begin{prob} \cite{EG}
Does $E$ decompose into a nontrivial fiber sum of other Lefschetz fibrations? Is $E$ isomorphic to a fiber sum of two copies of Matsumoto's fibration?
\end{prob}

\par As the manifold $E$ is homeomorphic but not diffeomorphic to $E(1)$, whereas an appropriately twisted fiber sum of two copies of Matsumoto's fibration is also homeomorphic but not diffeomorphic to $E(1)$, this is an interesting problem to investigate. While we cannot answer this problem fully we will remark at the end of our article how $E$ has unique genus 2 fiber sum decomposition up to diffeomorphism on the indecomposable summands if $E$ is fiber sum decomposable. (i.e. we will rule out any other possible genus 2 fiber sum decompositions.)

\par In this article, we will improve the construction of the Akhmedov-Park's exotic smooth 4-manifolds \cite{AP} where we found six lantern relations to finding seven lantern relations and also show how some of them are fiber sum decomposable. That is we will show how simply connected, minimal symplectic $4$-manifolds $X(n)$ for $2 \leq n \leq 7$ homeomorphic but not diffeomorphic to $3\CP\# (21-n)\CPb$ for $2 \leq n \leq 7$ with $b_2^+=3$ and symplectic Kodaira dimensions $\kappa^s = 1$ for $n=2$ and $\kappa^s = 2$ for $3 \leq n \leq 7$ acquired by starting from genus 2 Lefschetz fibration on $K3\#2\CPb$ and applying a sequence of seven rational blowdowns via lantern relation substitutions are all fiber sum decomposable for $2 \leq n \leq 6$ into nontrivial fiber sum of other genus 2 Lefschetz fibrations. 

\begin{thm}[Decomposability of $X(n)$ for $2 \leq n \leq 6$]
The genus 2 Lefschetz fibrations $X(n)$ for $2 \leq n \leq 6$ are all decomposable into nontrivial fiber sum of other genus 2 Lefschetz fibrations. Namely, $X(2)$ is isomorphic to an untwisted fiber sum of Matsumoto's fibration on $\mathbb{S}^2 \times \mathbb{T}^2 \# 4 \CPb$ with Lefschetz fibration on $Z(0) = \CP \# 13\CPb$. Additionally, $X(3), X(4), X(5),X(6)$ are isomorphic to an untwisted fiber sum of Matsumoto fibration on $\mathbb{S}^2 \times \mathbb{T}^2 \# 4 \CPb$ with $Z(1), Z(2), Z(3),Z(4)$ respectively.
\end{thm}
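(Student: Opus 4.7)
The plan is to translate the geometric decomposition into an algebraic splitting of the positive factorization. Recall that each $X(n)$ is obtained from the standard genus-$2$ fibration on $K3\#2\CPb$ by performing $n$ successive lantern substitutions, each of which replaces a subword $t_{a_1}t_{a_2}t_{a_3}t_{a_4}$ by a shorter positive word $t_{x}t_{y}t_{z}$ in the monodromy factorization in $\mathrm{Mod}(\Sigma_2)$. First I would write out the resulting word $W_n$ explicitly, keeping careful track of which Dehn twists appear after each substitution and at which position in the word they sit. Throughout I work up to Hurwitz moves $t_a t_b \leftrightarrow t_{t_a(b)}t_a$ and global conjugation, since these operations do not change the diffeomorphism type of the underlying Lefschetz fibration.

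The second step is to exhibit Matsumoto's monodromy as a contiguous subword. Matsumoto's genus-$2$ fibration on $\mathbb{S}^{2}\times\mathbb{T}^{2}\#4\CPb$ admits a well-known positive factorization of length $8$ (in appropriate generators $(t_{c_1}t_{c_2}t_{c_3}t_{c_4})^{2}$, with six non-separating and two separating twists). Using Hurwitz moves and cyclic permutation I would migrate the relevant twists in $W_n$ into adjacent positions, obtaining a factorization $W_n = M \cdot R_n$, where $M$ is precisely Matsumoto's factorization. Since $W_n = 1$ and $M = 1$ in $\mathrm{Mod}(\Sigma_2)$, we obtain $R_n = 1$ as well, so $R_n$ is itself the positive factorization of a genus-$2$ Lefschetz fibration over $\mathbb{S}^{2}$.

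Given such a splitting, the untwisted fiber sum interpretation is classical: a positive factorization concatenating as $W = M \cdot R$ yields a Lefschetz fibration that is the untwisted genus-$2$ fiber sum of the fibrations defined by $M$ and by $R$. I would then identify the fibration defined by $R_n$ with $Z(n-2)$ by matching its positive factorization against the monodromy definition of $Z(n-2)$, and by cross-checking basic topological invariants ($b_{2}^{\pm}$, signature, Euler characteristic, $\pi_1$) against those already computed for $Z(n-2)$.

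The main obstacle is the second step: finding the explicit sequence of Hurwitz moves that isolates a copy of Matsumoto's factorization inside the rather intricate word $W_n$. My strategy would be inductive. Start from $n=2$, where the bookkeeping is lightest and the Matsumoto block can be located directly. Then observe that each additional lantern substitution performed for $n\geq 3$ can be arranged to take place entirely within the $R_n$ piece, leaving the Matsumoto block $M$ untouched; what changes from $R_{n}$ to $R_{n+1}$ is exactly one further lantern substitution on $R_{n}$. This is precisely what produces the sequence of "remaining" summands $Z(0), Z(1), Z(2), Z(3), Z(4)$ matching $X(2),\ldots,X(6)$, and reduces the theorem to verifying the base case and the fact that the lantern moves in steps $n\geq 3$ may be confined to the non-Matsumoto part of the word.
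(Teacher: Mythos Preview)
Your proposal is correct and is essentially the paper's own argument: the monodromy of $X(n)$ is constructed from the outset (Lemmas~\ref{fourl} and~\ref{sixl}) so that it already reads as $(B_0B_1B_2\delta)^2\cdot Z(n-2)$, with the lantern substitutions for $n\ge 3$ performed entirely inside the $Z$-block and no global conjugation used there, which is exactly your inductive scheme. One small correction: Matsumoto's length-$8$ factorization is $(B_0B_1B_2\delta)^2$ in the curves of Figure~\ref{fig:matsumoto}, not $(t_{c_1}t_{c_2}t_{c_3}t_{c_4})^2$; with that fixed, no Hurwitz ``search'' is needed, since the paper's explicit factorizations already exhibit the splitting.
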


\par Here, $Z(m)$ for $1 \leq m \leq 4$ are examples similar to Endo-Gurtas' genus 2 examples in that they are acquired by starting from genus 2 Lefschetz fibration $Z(0) = \CP \# 13\CPb$ and applying a sequence of four rational blowdowns via lantern relation substitutions.

\par After showing decomposability, we will show that the one of the decomposable example $X(2)$ which is a minimal exotic symplectic 4-manifold with the homeomorphism type of $3\CP\#19\CPb$ with $b_2^+=3$ and symplectic Kodaira dimension $\kappa^s = 1$ has the unique genus 2 fiber sum decomposition up to diffeomorphism on the indecomposable summands.

\begin{thm}[Unique decomposition of $X(2)$]
The genus 2 Lefschetz fibration $X(2)$ which has $n$ irreducible singular fibers and $s$ reducible singular fibers pair $(n,s) = (26,2)$ must decompose under the genus 2 fiber sum having the indecomposable summands of Matsumoto's fibration on $\mathbb{S}^2 \times \mathbb{T}^2 \# 4\CPb$ and the genus 2 Lefschetz fibration on $Z(0)=\CP \# 13\CPb$. Each summands are determined up to diffeomorphism.
\end{thm}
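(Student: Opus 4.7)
The plan is to show that any nontrivial genus $2$ fiber sum decomposition $X(2) \cong X_1 \#_f X_2$ is forced to have $(n_1,s_1)=(20,0)$ and $(n_2,s_2)=(6,2)$ up to ordering, and then to identify each summand's diffeomorphism class. The pair $(n,s)$ of vanishing cycle counts is additive under the genus $2$ fiber sum, since the monodromy factorization of the sum is the concatenation of those of the summands, so combined with the hyperelliptic signature formula $5\sigma(X)=-(3n+s)$ any two summands must satisfy
\begin{equation*}
n_1+n_2=26,\qquad s_1+s_2=2,\qquad 3n_i+s_i\equiv 0\pmod 5.
\end{equation*}
These constraints, together with $(n_i,s_i)\ne(0,0)$ for nontriviality, leave only three possibilities for $(s_1,s_2)$: $(0,2)$, $(2,0)$, or $(1,1)$.

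For the symmetric cases $(0,2)$ and $(2,0)$, the $s=0$ summand has $n$ a positive multiple of $5$ with $n\le 26$. The Siebert--Tian classification of genus $2$ Lefschetz fibrations without separating vanishing cycles identifies every such fibration as a fiber sum of copies of the standard fibrations $f_0$ (with $n=20$ and total space $\CP\#13\CPb$) and $f_1$ (with $n=30$ and total space $K3\#2\CPb$). Hence the $s=0$ summand must be a single copy of $f_0$, with $(n,s)=(20,0)$ and total space diffeomorphic to $Z(0)=\CP\#13\CPb$; the complementary summand then has $(n,s)=(6,2)$. For the mixed case $(1,1)$, divisibility forces $n_i\equiv 3\pmod 5$ so $(n_1,n_2)\in\{(3,23),(8,18),(13,13),(18,8),(23,3)\}$, and I would eliminate each pair by combining Stipsicz--Ozbagci type lower bounds on the number of irreducible vanishing cycles in a relatively minimal genus $2$ fibration containing precisely one separating vanishing cycle with a direct enumeration of short positive factorizations in the genus $2$ mapping class group that include a single separating Dehn twist.

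It remains to identify the $(6,2)$-summand uniquely. Its Euler characteristic $4$ and signature $-4$ pin down the homeomorphism type of the total space as $\mathbb{S}^2\times\mathbb{T}^2\#4\CPb$. Combined with symplectic minimality of the summand, inherited from that of $X(2)$, and a Hurwitz-equivalence uniqueness result for genus $2$ monodromy factorizations realizing $(n,s)=(6,2)$, the total space is diffeomorphic to $\mathbb{S}^2\times\mathbb{T}^2\#4\CPb$ and the fibration is Matsumoto's. The main obstacle I expect is eliminating the mixed splitting $(s_1,s_2)=(1,1)$, since divisibility alone admits five candidate $(n_1,n_2)$ pairs and their exclusion requires a sharp enough lower bound on the number of nonseparating vanishing cycles in a genus $2$ Lefschetz fibration containing exactly one separating vanishing cycle.
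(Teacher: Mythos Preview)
Your overall strategy matches the paper's, but there are two genuine gaps. First, your divisibility constraint $3n_i+s_i\equiv 0\pmod 5$ is too weak: the abelianization $\Gamma_2^{\mathrm{ab}}\cong\Z/10\Z$ gives the sharper condition $n_i+2s_i\equiv 0\pmod{10}$, which the paper uses. With this, the $s=(1,1)$ case collapses to the single pair $(n_1,n_2)=(8,18)$, and the paper disposes of it by citing Sato's Remark~5.1, which shows that $(n,s)=(8,1)$ cannot occur for any genus~2 Lefschetz fibration. Your proposed elimination via ``Stipsicz--Ozbagci type lower bounds'' and ``direct enumeration of short positive factorizations'' is neither completed nor necessary once the correct modulus is used; and as stated it would have to handle spurious pairs like $(3,23)$ and $(13,13)$ that the mod~$10$ constraint already excludes.

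Second, your identification of the $(6,2)$ summand is flawed on two counts. The numbers $e=4$, $\sigma=-4$ do not by themselves pin down the homeomorphism type: you still need to determine $b_1$, and nothing you have said controls it. More seriously, the claim that the summand inherits symplectic minimality from $X(2)$ is false in general, and in fact Matsumoto's fibration on $\mathbb{S}^2\times\mathbb{T}^2\#4\CPb$ is \emph{not} minimal (it has a section of square $-1$), so minimality cannot be the mechanism here. The paper instead invokes Sato's Proposition~4.1, which directly shows that any genus~2 Lefschetz fibration with $(n,s)=(6,2)$ has total space diffeomorphic to $\mathbb{S}^2\times\mathbb{T}^2\#4\CPb$; similarly it uses its own Proposition~\ref{200} (rather than Siebert--Tian) to identify the $(20,0)$ summand with $\CP\#13\CPb$, and Sato's Remark~5.1 to rule out $(n,s)=(10,0)$ in the $s=(2,0)$ case. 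Your Siebert--Tian route for the $s=0$ summand is reasonable and reaches the same endpoint, but the $(6,2)$ step needs to be replaced by a genuine classification input of this kind.
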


\par Accordingly, we will narrow down all the possible genus 2 fiber sum decompositions of $X(n)= Y(1) \# Y(2)$ for $3 \leq n \leq 6$ examples with $\kappa^s=2$ by the consideration on the possible $n$ irreducible singular fibers and $s$ reducible singular fibers pair $(n,s)$ for both $Y(1), Y(2)$ where both summands $Y(1), Y(2)$ are relatively minimal genus 2 Lefschetz fibrations.

\section{Preliminaries}

For the convenience of the reader we repeat the preliminary definitions and results from \cite{AP, GS} mostly without proofs, thus making our exposition self-contained.
The list of topics that need to be recalled are the mapping class groups, the Lefschetz fibrations over $\mathbb{S}^{2}$ with details on the Matsumoto's genus two fibration on $\mathbb{S}^2 \times \mathbb{T}^2 \# 4\CPb$, lantern relation substitution and its relationship with the rational blowdown operation, the symplectic Kodaira dimension and the symplectic minimality.

\subsection{Mapping Class Groups}\label{mapping}

\par Let $\Sigma_{g}$ denote a $2$-dimensional, closed, oriented, and connected Riemann surface of genus $g>0$. 

\begin{defn}
Let $Diff^{+}\left( \Sigma_{g}\right)$ denote the group of all orientation-preserving diffeomorphisms $\Sigma_{g}\rightarrow \Sigma_{g},$ and $ Diff_{0}^{+}\left(
\Sigma_{g}\right) $ be the subgroup of $Diff^{+}\left(\Sigma_{g}\right) $ consisting of all orientation-preserving diffeomorphisms $\Sigma_{g}\rightarrow \Sigma_{g}$ that are isotopic to the identity. \emph{The mapping class group} $\Gamma_g$ of $\Sigma_{g}$ is defined to be the group of isotopy classes of orientation-preserving diffeomorphisms of $\Sigma_{g}$, i.e.,
\[\Gamma_{g}=Diff^{+}\left( \Sigma_{g}\right) /Diff_{0}^{+}\left(\Sigma_{g}\right) .\]
\end{defn}

\begin{defn}
\par Let $\alpha$ be a simple closed curve on $\Sigma_{g}$. A \emph{right handed Dehn twist} $t_\alpha$ about $\alpha$ is the isotopy class of a self-diffeomophism of $\Sigma_{g}$ obtained by cutting the surface $\Sigma_{g}$ along $\alpha$ and gluing the ends back after rotating one of the ends $2\pi$ to the right. 
\end{defn}

\begin{figure}[ht]
\begin{center}
\includegraphics[scale=.73]{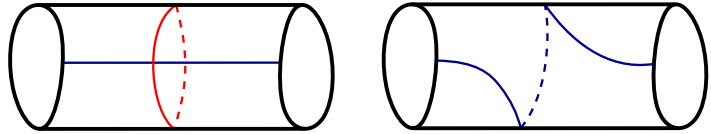}
\caption{A positive Dehn twist to a cylinder about the red curve}
\label{fig:Dehn_Twist}
\end{center}
\end{figure}

\noindent The mapping class group $\Gamma_{g}$ is finitely generated by $3g-1$ Dehn twists which was proven by the work of Dehn and Lickorish (cf. \cite{FM}). It follows that the conjugate of a Dehn twist is again a Dehn twist. That is, if $f: \Sigma_{g}\rightarrow \Sigma_{g}$ is an orientation-preserving diffeomorphism, then it is easy to check that $f \circ t_\alpha \circ f^{-1} = t_{f(\alpha)}$.

\par We will now provide a presentation for the mapping class group of the genus 2 surface $\Gamma_2$. As we will be working mostly with the genus 2 Lefscehtz fibrations restricting our attention to $\Gamma_2$ will not interfere with the construction we will illustrate.

Let $t_i$ $(i=1,\ldots, 5)$ be positive Dehn twists along the loops $c_i$ $(i=1,\ldots, 5)$ illustrated in Figure~\ref{fig:Sigma2}.  
The mapping class group $\Gamma_2$ of a genus-$2$ Riemann surface is generated by $t_1, \ldots, t_5$, and the following relations are defining relations (cf. \cite{Birman}).
{\allowdisplaybreaks 
\begin{eqnarray}
&& t_i t_j = t_j t_i     \quad \mbox{    if $|i-j|\geq 2$, } \label{eq:01} \\
&& t_i t_{i+1} t_i = t_{i+1} t_i t_{i+1}    \quad \mbox{    for $i=1, \ldots, 4$, } \label{eq:02} \\
&& \tau^2 =1    \quad \mbox{  where $\tau= t_1 t_2 t_3 t_4 t_{5}^2 t_4 t_3 t_2 t_1$,}\label{eq:03} \\
&&(t_1 t_2 t_3 t_4 t_5)^6=1, \label{eq:04} \\
&&\tau  \,  t_i = t_i \,  \tau \quad \mbox{    for $i=1, \dots, 5$.} \label{eq:05} 
\end{eqnarray}}
Let $t_\delta$ be a positive Dehn twist along the loop $\delta$ illustrated in Figure~\ref{fig:Sigma2}.  \\
Then $t_\delta= (t_1 t_2)^6$, this relation is called a chain relation.

\begin{figure}[ht]
\begin{center}
\resizebox{!}{3.5cm}{\includegraphics{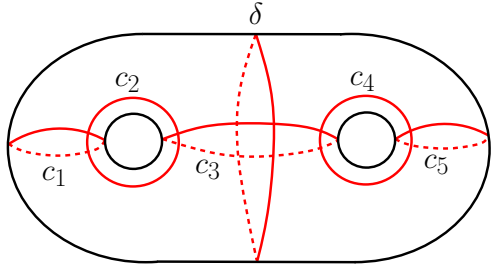}}
\caption{Curves $c_1$, $c_2$, $c_3$, $c_4$, and $c_5$}
\label{fig:Sigma2}
\end{center}
\end{figure}

\subsection{Lantern Relation}\label{lantern}

Let us recall the definition of the lantern relation which will be used extensively in our construction of exotic 4-manifolds. 

Let $\Sigma_{0,4}$ be a sphere with 4 boundary components.

\begin{lem} 
If $\delta_1, \delta_2, \delta_3, \delta_4$ are the boundary curves of $\Sigma_{0,4}$ and $\alpha$, $\beta$, $\gamma$ are 
the simple closed curves as shown in Figure~\ref{fig:lan},
then we have \[t_{\gamma }t_{\beta }t_{\alpha}=t_{\delta_{1}}t_{\delta_{2}}t_{\delta_{3}}t_{\delta_{4}},\] where $t_{\delta_{i}},$ $1\leq i\leq 4,$ denote the Dehn twists about $\delta_{i}.$
\end{lem}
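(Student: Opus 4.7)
The plan is to verify the identity via the Alexander method: it suffices to exhibit a collection of properly embedded arcs on $\Sigma_{0,4}$ whose complement is a disjoint union of disks, and check that both sides of the claimed equation send each arc to the same isotopy class rel $\partial \Sigma_{0,4}$. Any orientation-preserving self-diffeomorphism of a compact surface with boundary that fixes the boundary pointwise and fixes such a filling arc system up to isotopy is itself isotopic to the identity rel boundary, so agreement on arcs forces agreement in $\Gamma(\Sigma_{0,4})$.

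First I would choose a convenient arc system, for instance two disjoint properly embedded arcs $a_1, a_2$ joining $\delta_1$ to $\delta_2$ and $\delta_3$ to $\delta_4$ respectively, drawn so that together with the four boundary circles they decompose $\Sigma_{0,4}$ into two hexagonal disks. I would choose these arcs so that each meets $\alpha, \beta, \gamma$ transversely in the minimal number of points; from the symmetric lantern picture one verifies that $|a_i \cap \alpha|, |a_i \cap \beta|, |a_i \cap \gamma|$ are each $0$ or $2$.

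Next I would compute both sides on each arc. The right-hand side is transparent: each $t_{\delta_j}$ is supported in a collar of $\delta_j$, so $t_{\delta_1} t_{\delta_2} t_{\delta_3} t_{\delta_4}$ simply inserts one boundary-parallel twist near each endpoint and leaves the complement of the boundary collars unchanged. For the left-hand side, I would draw $a_1$ and $a_2$ and successively apply $t_\alpha$, then $t_\beta$, then $t_\gamma$, using the standard local model of a Dehn twist acting on a transverse arc. After simplifying the resulting tangles by isotopies rel $\partial \Sigma_{0,4}$, the image should coincide with the image under the right-hand side, establishing the lantern identity.

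The main obstacle is the combinatorial bookkeeping on the left-hand side: because $\alpha, \beta, \gamma$ pairwise meet in two transverse points, the image $t_\alpha(a_i)$ meets $\beta$ and $\gamma$ in strictly more points than $a_i$ did, and one has to track how the subsequent twists $t_\beta$ and $t_\gamma$ cancel all of this interior winding, leaving behind only the single boundary-parallel twist near each $\delta_j$. Carrying out this cancellation cleanly is essentially the content of the lantern identity, and the most efficient approach is to work from the standard symmetric lantern picture (as in Figure~\ref{fig:lan}) and track each of the two chosen arcs through the three successive Dehn twists frame by frame.
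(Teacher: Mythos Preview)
The paper does not actually prove this lemma: it simply writes ``For a proof see (\cite{FM, Johnson})'' and moves on. Your Alexander-method approach is precisely the standard argument in those references, so in spirit you are doing exactly what the paper defers to.

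There is, however, a genuine gap in your arc system. Two disjoint arcs $a_1,a_2$ joining $\delta_1$ to $\delta_2$ and $\delta_3$ to $\delta_4$ do \emph{not} cut $\Sigma_{0,4}$ into disks: an Euler-characteristic count gives $\chi(\Sigma_{0,4})+2=-2+2=0$, and indeed cutting along $a_1$ yields a pair of pants and then cutting along $a_2$ yields an annulus, not ``two hexagonal disks.'' On an annulus the Alexander method fails, since the core Dehn twist fixes both boundary circles pointwise yet is nontrivial. Concretely, agreement of your two sides on $a_1$ and $a_2$ alone would only pin down $t_\gamma t_\beta t_\alpha (t_{\delta_1}t_{\delta_2}t_{\delta_3}t_{\delta_4})^{-1}$ up to a power of a twist about the core of that residual annulus.

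The fix is easy: add a third arc (for instance one joining $\delta_1$ to $\delta_3$, disjoint from $a_1,a_2$) so that the complement becomes a single disk, and then carry out the frame-by-frame computation you describe on all three arcs. With that correction your outline is complete and is essentially the proof in \cite{FM}.
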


For a proof see (\cite{FM, Johnson}).

\begin{figure}[ht]
\begin{center}
\includegraphics[scale=.43]{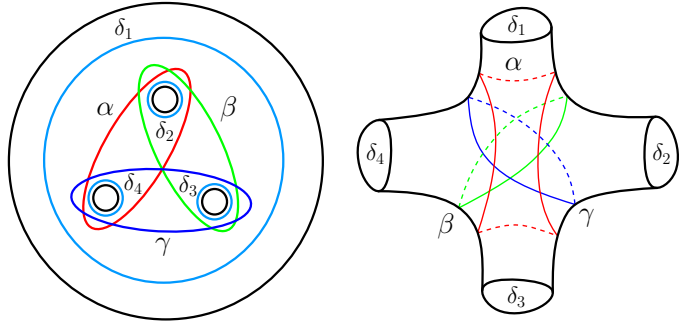}
\caption{Curves defining lantern relation drawn two ways}
\label{fig:lan}
\end{center}
\end{figure}

The lantern relation in genus 2 surface implies $t_{\gamma}t_{\beta}t_{\alpha} = t_{\beta}t_{\alpha}t_{\gamma} = t_{\alpha}t_{\gamma}t_{\beta}$. This relation follows easily from the lantern relation plus the relation that each $\delta_{i}$ for $1\leq i\leq 4$ commutes with each of $t_{\gamma}$, $t_{\beta}$, $t_{\alpha}$. Note that $t_{\gamma}t_{\beta}t_{\alpha}$ is not equal to $t_{\beta}t_{\gamma}t_{\alpha}$. We refer readers to the book of B. Farb and D. Margalit \cite{FM} for more details on mapping class group \& lantern relation.

\subsection{Lefschetz fibrations}\label{Lefschetz}

\par In this section we recall the definition of Lefschetz fibrations over $\mathbb{S}^2$ and introduce three basic examples of complex genus two fibrations with no reducible fibers. We will also introduce Matsumoto's genus two Lefschetz fibrations over $\mathbb{S}^{2}$ with $8$ singular fibers which are six irreducible fibers and two reducible fibers. They will later appear as the summands of the decomposable examples $X(n)$ for $2 \leq n \leq 6$. 

\begin{defn}\label{LF}\rm
Let $X$ be a closed, oriented smooth $4$-manifold. Lefschetz fibration of a smooth 4--manifold $X$ comprises a smooth surjective map $f : X \rightarrow \mathbb{S}^2$, which is a submersion on the complement of finitely many points $p_i$ in distinct fibers, at which there are local complex coordinates (compatible with fixed global orientations on $X$ and $\mathbb{S}^2$) with respect to which the map takes the form $(z_1, z_2) \mapsto z_1 ^2 + z_2 ^2$.  We always assume that the fibers contain no $(-1)$--spheres (``relative minimality'') so in particular the fiber genus is always strictly positive.
\end{defn}

\par By the hypotheses of good local complex models, each singular fiber of the Lefschetz fibration is a nodal curve with a unique nodal singularity, and it is obtained by shrinking a simple closed curve (the \textit{vanishing cycle}) in the regular fiber to the nodal point of the singular fiber. They fall into two classes: irreducible fibers, where we collapse a non-separating cycle in the Riemann surface, and reducible fibers, where we collapse a separating cycle which gives the one-point union of smooth Riemann surfaces of smaller genera. 

\par The existence of a Lefschetz fibration structure guarantees that $X$ is a symplectic 4--manifold with an intrinsic symplectic form which takes the shape $\omega = \tau+ N f^* \omega_{\mathbb{S}}$ where $\tau$ is a closed form which is symplectic on the smooth fibres, and $\omega_{\mathbb{S}}$ is symplectic on the base $\mathbb{S}^2 \cong \PP$. The form is symplectic for sufficiently large $N$ by the work of R. Gompf (cf. \cite{GS}). Topology of $X$ is determined by a monodromy homomorphism $\psi_X : \pi_1 (\mathbb{S}^2 \backslash \{f(p_i) \}) \ \rightarrow \ \Gamma_2$.  The map $\psi_X$ maps the generators of the fundamental group which encircle a single critical point once in an anticlockwise fashion to positive Dehn twists in the mapping class group. These Dehn twists are along the corresponding \emph{vanishing cycles}. Thus the topology of $X$ is completely encoded in an algebraic monodromy which is a word equal to the identity in the mapping class group, called a \emph{positive relation}.

\par Let $c_{1}$, $c_{2}$, $c_{3}$, $c_{4}$, and $c_{5}$ be the simple closed curves as in Figure~\ref{fig:Sigma2}. For convenience we shall denote the right handed Dehn twists $t_{c_i}$ along the curve $c_i$ by $c_{i}$. On the mapping class group $\Gamma_2$, it is well known that the following positive relations hold,

\begin{equation}
\begin{array}{l}
(c_1c_2c_3c_4{c_5}^2c_4c_3c_2c_1)^2 = 1,  \\
(c_1c_2c_3c_4c_5)^6 = 1,  \\ 
(c_1c_2c_3c_4)^{10} = 1. 
\end{array}
\end{equation}

\par For each of the positive relations above, it follows that there exists the corresponding genus 2 K\"ahler Lefschetz fibrations over $\mathbb{S}^2$ with the total spaces $\CP\#13\CPb$, $K3\#2\CPb$ and the Horikawa surface $H$ respectively. (cf. \cite{Chakiris, Smith1}).

\subsection{Matsumoto's genus two fibration}\label{m}

\par Matsumoto showed that $\mathbb{S}^2 \times \mathbb{T}^2 \# 4 \CPb$ has a genus 2 Lefschetz fibration with 6 irreducible singular fibers and 2 reducible singular fibers with a section of self-intersection -1 (cf. \cite{Matsumoto, Korkmaz}). The positive relation of the fibration is $(B_0B_1B_2\delta)^2=1$, where $B_0$, $B_1$, $B_2$, $\delta$ are the curves indicated on Figure~\ref{fig:matsumoto}.

\begin{figure}[ht]
\begin{center}
\resizebox{!}{3.5cm}{\includegraphics{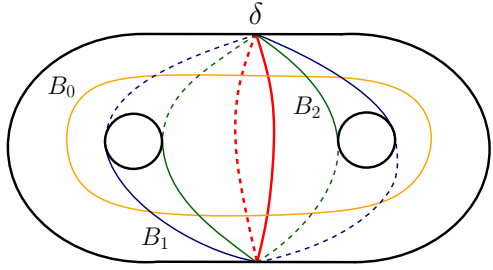}}
\caption{Curves for Matsumoto's genus 2 fibration}
\label{fig:matsumoto}
\end{center}
\end{figure}

\par By using the classfication of simple closed curves (cf. \cite{FM}), we know that there is only one nonseparating simple closed curve in surface $S$. 

\begin{prop}
If $\alpha$ and $\beta$ are any two nonseparating simple closed curves in a surface $S$, then there is a homeomorphism $\lambda : S \rightarrow S$ with $\lambda(\alpha) = \beta$.
\end{prop}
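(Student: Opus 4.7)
The plan is to invoke the classification of compact orientable surfaces applied to the two surfaces obtained by cutting $S$ along $\alpha$ and $\beta$ respectively. This is the standard ``change of coordinates'' argument for surfaces, and the nonseparating hypothesis is exactly what is needed to make the combinatorial invariants of the two cut surfaces agree.

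First, I would cut $S$ along $\alpha$ to obtain a surface $S_\alpha$, and along $\beta$ to obtain $S_\beta$. Because $\alpha$ and $\beta$ are nonseparating, both $S_\alpha$ and $S_\beta$ are connected, and each carries exactly two boundary components, namely the two copies of the cut curve. Cutting along a simple closed curve preserves Euler characteristic (since a circle has $\chi=0$), so $\chi(S_\alpha)=\chi(S_\beta)=\chi(S)$. Hence $S_\alpha$ and $S_\beta$ are compact, orientable, connected surfaces with the same Euler characteristic and the same number of boundary components, which by the classification of compact orientable surfaces with boundary forces them to be homeomorphic; both are genus $g(S)-1$ surfaces with two boundary circles. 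Pick any homeomorphism $\phi : S_\alpha \to S_\beta$.

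Next, I would modify $\phi$ so it is compatible with the regluing that recovers $S$. The two boundary components of $S_\alpha$ are identified in pairs to reproduce $\alpha\subset S$, and likewise for $S_\beta$ and $\beta$. By post-composing $\phi$ with a self-homeomorphism of $S_\beta$ that swaps its two boundary circles (such a homeomorphism exists precisely because both boundaries arose from cutting the single curve $\beta$) and, if needed, with a boundary-parallel Dehn twist to align the parametrizations, the adjusted map $\phi$ respects the pairing and the orientation-reversing identification used on both sides. The modified $\phi$ then descends under the natural quotient maps $S_\alpha\to S$ and $S_\beta\to S$ to a well-defined homeomorphism $\lambda : S\to S$ with $\lambda(\alpha)=\beta$.

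The main obstacle is this final matching step: making $\phi$ compatible with the gluing data, not just with the underlying topology of the cut surfaces. The point is that one has to verify that the mapping class group of a genus $g(S)-1$ surface with two boundary components is rich enough to contain both a boundary-swapping element and boundary-parallel Dehn twists, so that every potential mismatch between the boundary identifications in $S_\alpha$ and $S_\beta$ can be corrected. Once this is established, the descent of $\phi$ to a homeomorphism of $S$ is automatic and the proposition follows.
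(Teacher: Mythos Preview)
Your argument is correct and is exactly the standard ``change of coordinates'' proof; the paper does not give its own argument but simply cites Farb--Margalit \cite{FM}, where this cut-classify-reglue approach is the one presented. So your proposal coincides with the proof the paper defers to.
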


For a proof see \cite{FM}.

\par This leads to the following useful proposition proven by the work of Akhmedov and Monden \cite{AM} which is by conjugating the global monodromy for Matsumoto's genus 2 fibration by the well chosen mapping class of the $\Gamma_2$ which sends $B_0$ to $c_1$ (both are nonseparating simple closed curves) we get a positive relation that contains $(c_{1})^{2}$ which will later aid us in the construction of $X(7)$.

\begin{prop}\label{m2}
The Matsumoto's genus two Lefschetz fibration with the total space $\mathbb{S}^2 \times \mathbb{T}^2 \# 4 \CPb$ can be given by a positive relation
\begin{align}
(c_{1})^{2}(Y_{1} Y_{2} Y_{c})^{2} &= 1
\end{align}
which is acquired by conjugating global monodromy for Matsumoto's genus 2 fibration by the $\lambda = \iota \phi$ where $\phi = c_{4}^{-1} c_{3}^{-1} c_{2}^{-1} c_{1}^{-1}$ and $\iota$ is the vertical involution of the genus two surface with two fixed points. 
\end{prop}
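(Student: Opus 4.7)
The strategy is to exploit the standard principle that positive relations in $\Gamma_2$ are preserved under simultaneous conjugation: if $t_{\alpha_1}\cdots t_{\alpha_N}=1$ and $\lambda\in\Gamma_2$, then using $\lambda t_\alpha\lambda^{-1}=t_{\lambda(\alpha)}$ (recalled in Section \ref{mapping}) one obtains a new positive relation $t_{\lambda(\alpha_1)}\cdots t_{\lambda(\alpha_N)}=1$. Geometrically, this amounts to reparametrizing the reference fiber of the Lefschetz fibration by $\lambda$, and so produces an isomorphic Lefschetz fibration over $\mathbb{S}^{2}$ with the same total space. Applied to Matsumoto's relation $(B_{0}B_{1}B_{2}\delta)^{2}=1$, any such conjugation yields another monodromy factorization for Matsumoto's fibration on $\mathbb{S}^{2}\times\mathbb{T}^{2}\#4\CPb$.

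The plan is three steps. First, I would verify the key geometric identity $\lambda(B_{0})=c_{1}$ for $\lambda=\iota\phi$ with $\phi=c_{4}^{-1}c_{3}^{-1}c_{2}^{-1}c_{1}^{-1}$. The existence of \emph{some} mapping class sending $B_{0}$ to $c_{1}$ is guaranteed by the preceding proposition on nonseparating simple closed curves; the assertion here is that the explicit composition $\iota\phi$ does the job. This is checked by tracking the action of each factor in $\phi$ on $B_{0}$ from Figure \ref{fig:matsumoto}, in sequence, using the standard picture of a Dehn twist, and then applying the hyperelliptic involution $\iota$ to the resulting curve. Second, I would set $Y_{1}:=\lambda(B_{1})$, $Y_{2}:=\lambda(B_{2})$, $Y_{c}:=\lambda(\delta)$ by pushing the remaining Matsumoto curves through the same diffeomorphism and reading off explicit representatives on $\Sigma_{2}$.

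With these identifications in hand, conjugating Matsumoto's relation by $\lambda$ immediately gives the positive relation $(c_{1}\,Y_{1}Y_{2}Y_{c})^{2}=1$ in $\Gamma_{2}$. To bring this into the stated form $(c_{1})^{2}(Y_{1}Y_{2}Y_{c})^{2}=1$, I would apply a short sequence of Hurwitz moves sliding the second occurrence of $c_{1}$ leftward past the block $Y_{1}Y_{2}Y_{c}$, relabeling the displaced vanishing cycles by the same symbols; Hurwitz-equivalent positive factorizations represent isomorphic Lefschetz fibrations, so this rearrangement does not change the underlying fibration. The main obstacle, and essentially the only substantive work, lies in the geometric bookkeeping of the first two steps: tracking the image of each Matsumoto curve through four Dehn twists and the involution $\iota$ on $\Sigma_{2}$ requires careful diagrammatic verification. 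Once those images are pinned down, the proposition follows formally from the conjugation principle together with Hurwitz equivalence.
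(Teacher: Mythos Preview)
Your overall strategy---simultaneously conjugating Matsumoto's positive relation by $\lambda$ and reading off the images of the vanishing cycles---is exactly the approach the paper (deferring to \cite{AM}) has in mind, and your first two steps are sound.

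The gap is in your third step. A Hurwitz slide of the second $c_{1}$ leftward past the block $Y_{1}Y_{2}Y_{c}$ replaces those three factors by their $t_{c_{1}}^{-1}$-conjugates, producing $c_{1}^{2}\cdot Y_{1}'Y_{2}'Y_{c}'\cdot Y_{1}Y_{2}Y_{c}$ with $Y_{i}'=t_{c_{1}}^{-1}(Y_{i})$. ``Relabeling the displaced vanishing cycles by the same symbols'' does not yield the stated factorization: the proposition asserts a relation in which the \emph{same} three curves of Figure~\ref{fig:SpecialY} occur twice, and after your Hurwitz move the two blocks are genuinely different. Hurwitz equivalence guarantees the fibration is unchanged, but it does not by itself deliver the specific word $(c_{1})^{2}(Y_{1}Y_{2}Y_{c})^{2}$.

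The missing ingredient is that $B_{0}B_{1}B_{2}\delta$ equals the hyperelliptic involution $\tau$ of \eqref{eq:03}, which is central in $\Gamma_{2}$ by \eqref{eq:05}. From $B_{1}B_{2}\delta=B_{0}^{-1}\tau$ and $\tau^{2}=1$ one computes $(B_{1}B_{2}\delta)^{2}=B_{0}^{-2}$, so Matsumoto's relation can already be rewritten as $(B_{0})^{2}(B_{1}B_{2}\delta)^{2}=1$ \emph{before} any conjugation. Conjugating this form by $\lambda$ (using $\lambda(B_{0})=c_{1}$ and your definitions $Y_{i}:=\lambda(B_{i})$, $Y_{c}:=\lambda(\delta)$) gives $(c_{1})^{2}(Y_{1}Y_{2}Y_{c})^{2}=1$ directly, with no Hurwitz moves required. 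Equivalently, since $c_{1}Y_{1}Y_{2}Y_{c}=\lambda\tau\lambda^{-1}=\tau$ is central, $c_{1}$ commutes with the product $Y_{1}Y_{2}Y_{c}$; this is precisely the fact that legitimizes your rearrangement without altering any of the curves.
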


\begin{figure}[ht]
\begin{center}
\includegraphics[scale=.48]{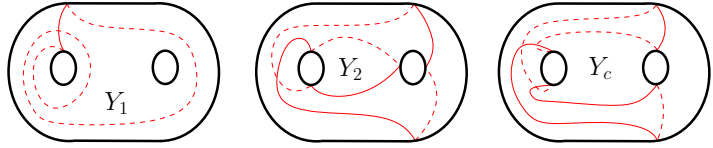}
\caption{Special curves $Y_1$, $Y_2$, $Y_c$}
\label{fig:SpecialY}
\end{center}
\end{figure}

For a detailed proof see \cite{AM}.

\subsection{Rational blowdown and Lantern relation substitution}

Surgical procedure of rational blowdown was introduced by Fintushel and Stern in 1993 \cite{FS1}  and generalized to its present form by Jongil Park in 1997 \cite{P1}
 which allowed constructions of many important examples of exotic 4-manifolds due to its explicit interplay with the Seiberg-Witten invariants. Namely, if a closed smooth $4$-manifold $X$ contains a certain configuration $C_p$ of transversally intersecting $2$-spheres whose boundary is the lens space $L(p^2, 1 - p)$ then one can construct a new smooth 4-manifold $X_p$ from $X$ by replacing the interior of $C_p$ with a rational ball $B_p$ (as $L(p^2, 1 - p)$ bounds a rational ball $B_p$ by Casson and Harer \cite{CH}) to construct a new manifold $X_p$. We say that $X_p$ is obtained by rationally blowing down $X$ along $C_p$. If one knows the Seiberg-Witten invariants of the original manifold $X$, then one can determine the Seiberg-Witten invariants of $X_p$.

\par Below we do the lightning review of the rational blowdown and refer the reader to \cite{FS1} for detailed investigation.

\par Let $p \geq  2$ and $C_p$ be the simply connected smooth $4$-manifold obtained by plumbing the $(p-1)$ disk bundles over the $2$-sphere according to the following linear diagram:

 \begin{picture}(100,60)(-90,-25)
 \put(-12,3){\makebox(200,20)[bl]{$-(p+2)$ \hspace{6pt}
                                  $-2$ \hspace{96pt} $-2$}}
 \put(4,-25){\makebox(200,20)[tl]{$u_{p-1}$ \hspace{25pt}
                                  $u_{p-2}$ \hspace{86pt} $u_{1}$}}
  \multiput(10,0)(40,0){2}{\line(1,0){40}}
  \multiput(10,0)(40,0){2}{\circle*{3}}
  \multiput(100,0)(5,0){4}{\makebox(0,0){$\cdots$}}
  \put(125,0){\line(1,0){40}}
  \put(165,0){\circle*{3}}
\end{picture}

\noindent where each node $u_{i}$ of the linear diagram represents a disk bundle over $2$-sphere with the given Euler number. 

By the work of Casson and Harer \cite{CH}, the boundary of $C_p$ is the lens space $L(p^2, 1 - p)$ which also bounds a rational ball $B_p$ with $\pi_1(B_p) = \mathbb{Z}_p$ and $\pi_1(\partial B_p) \rightarrow  \pi_1(B_p)$ surjective. If $C_p$ is embedded in a $4$-manifold $X$ then the rational blowdown manifold $X_p$ is obtained by replacing $C_p$ with $B_p$, i.e., $X_p = (X \setminus C_p) \cup B_p$. If $X$ and $X \setminus C_p$ are simply connected, then so is $X_p$. 

Note that $b_{2}^{+}(X_p) = {b_2}^{+}(X)$ so that rationally blowing down increases the signature while keeping ${b_2}^{+}$. The following is easy to check.

\begin{lem}\label{thm:rb} $b_{2}^{+}(X_p) = {b_2}^{+}(X)$, $\sigma(X_p) = \sigma(X) + (p-1)$, ${c_1}^{2}(X_p) = {c_1}^2(X) + (p-1)$, and $\chi_{h}(X_p) = \chi_{h}(X)$.
\end{lem}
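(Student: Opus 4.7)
The plan is to establish the four identities in order: compute $\chi(X_p)$ via additivity of Euler characteristic, compute $b_2^{\pm}(X_p)$ via Mayer-Vietoris (which then gives $\sigma(X_p)$), and finally derive $c_1^2(X_p)$ and $\chi_h(X_p)$ from the universal identities $c_1^2 = 2\chi + 3\sigma$ and $\chi_h = \tfrac{1}{4}(\chi + \sigma)$ available on any closed oriented smooth $4$-manifold.

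For the Euler characteristic, the zero section of $C_p$ is a chain of $p-1$ transversally intersecting $2$-spheres with $p-2$ nodes, so $\chi(C_p) = 2(p-1) - (p-2) = p$, while $B_p$ being a rational ball with $\pi_1(B_p) = \mathbb{Z}_p$ forces $H_{\ast}(B_p;\mathbb{Q}) = H_{\ast}(\mathrm{pt};\mathbb{Q})$ and hence $\chi(B_p) = 1$. Additivity across the closed $3$-manifold $L(p^2, 1-p)$ (whose Euler characteristic vanishes) then gives
\[
\chi(X_p) = \chi(X) - \chi(C_p) + \chi(B_p) = \chi(X) - (p-1).
\]

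For $b_2^{\pm}$, I would apply rational Mayer-Vietoris to the decompositions $X = (X \setminus \mathrm{int}(C_p)) \cup C_p$ and $X_p = (X \setminus \mathrm{int}(C_p)) \cup B_p$, both glued along $L(p^2, 1-p)$. Since $H_1(L(p^2, 1-p);\mathbb{Q}) = H_2(L(p^2, 1-p);\mathbb{Q}) = 0$ and $H_2(B_p;\mathbb{Q}) = 0$, the sequences collapse to
\[
H_2(X;\mathbb{Q}) \cong H_2(X \setminus \mathrm{int}(C_p);\mathbb{Q}) \oplus H_2(C_p;\mathbb{Q}), \qquad H_2(X_p;\mathbb{Q}) \cong H_2(X \setminus \mathrm{int}(C_p);\mathbb{Q}),
\]
and the inclusions respect intersection forms. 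The plumbing matrix on $C_p$, tridiagonal with diagonal $(-(p+2), -2, \ldots, -2)$ and off-diagonal entries $1$, is negative definite of rank $p-1$, so the $\mathbb{Q}^{p-1}$ summand contributed by $C_p$ sits entirely in the negative part of the intersection form of $X$. This yields $b_2^+(X_p) = b_2^+(X)$, $b_2^-(X_p) = b_2^-(X) - (p-1)$, and therefore $\sigma(X_p) = \sigma(X) + (p-1)$.

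Substituting these into the universal formulas produces
\[
c_1^2(X_p) = 2\bigl(\chi(X) - (p-1)\bigr) + 3\bigl(\sigma(X) + (p-1)\bigr) = c_1^2(X) + (p-1),
\]
\[
\chi_h(X_p) = \tfrac{1}{4}\bigl(\chi(X) - (p-1) + \sigma(X) + (p-1)\bigr) = \chi_h(X),
\]
completing all four claims. The only nontrivial step is verifying that the plumbing intersection form on $C_p$ is negative definite, which reduces to a standard check that the leading principal minors of the tridiagonal matrix alternate in sign; everything else is additivity of Euler characteristic and routine Mayer-Vietoris bookkeeping, so I anticipate no substantive obstacle.
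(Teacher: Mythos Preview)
Your argument is correct and follows the same approach as the paper: use that $C_p$ has negative definite intersection form of rank $p-1$ and that $B_p$ is a rational ball to obtain $b_2^+(X_p)=b_2^+(X)$, $b_2^-(X_p)=b_2^-(X)-(p-1)$ and $e(X_p)=e(X)-(p-1)$, then substitute into $c_1^2=2e+3\sigma$ and $\chi_h=(e+\sigma)/4$. The paper's proof simply asserts the $b_2^{\pm}$ identities from negative definiteness without writing out the Mayer--Vietoris and Euler characteristic details you supply, but the content is identical.
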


\begin{proof} 

Notice that $C_{p}$ is 4-manifold with negative definite intersection form, thus we have ${b_{2}}^{+}(X_{p}) = {b_{2}}^{+}(X)$ and ${b_{2}}^{-}(X_{p}) = {b_{2}}^{-}(X) - (p-1)$. Thus,  $\sigma(X_p) = \sigma(X) + (p-1)$. Using the formulas ${c_1}^{2} = 3\sigma +2e$ and $\chi_{h} = (\sigma +e)/4$, we have ${c_{1}}^{2}(X_{p}) = 3\sigma(X_{p}) + 2e(X_{p}) = 3(\sigma(X)+(p-1)) + 2(e(X)-(p-1)) = {c_{1}}^{2}(X) + (p-1)$ and $\chi_{h}(X_p) = (\sigma(X)+(p-1) + e(X)-(p-1))/4 = \chi_{h}(X)$.  

\end{proof}

The following two theorems determines the effect of a rational blowdown on the Seiberg-Witten invariants.

\begin{thm}\label{SW1} \cite{FS1, P1}. Suppose $X$ is a smooth 4-manifold with $b_{2}^{+}(X) > 1$ which contains a configuration $C_{p}$. If $L$ is a SW basic class of $X$ satisfying $L\cdot u_{i} = 0$ for any i with $1 \leq i \leq p-2$  and $L\cdot u_{p-1} = \pm p$, then $L$ induces a SW basic class $\bar L$ of $X_{p}$ such that $SW_{X_{p}}(\bar L) = SW_{X}(L)$.  

\end{thm}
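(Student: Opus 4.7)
The plan is to decompose $X$ along the lens space boundary $\partial C_p = L(p^2, 1-p)$, setting $N = X \setminus \mathrm{int}(C_p)$, and then compare Seiberg--Witten moduli spaces on $X = N \cup C_p$ and $X_p = N \cup B_p$ via a neck-stretching argument. The crucial analytic fact is that $L(p^2, 1-p)$ is a spherical space form and therefore carries a metric of positive scalar curvature; under a long-neck limit, every Seiberg--Witten solution on $X$ or $X_p$ is forced to be reducible along the cylindrical neck $L(p^2,1-p)\times \R$, and the full moduli space localizes on the two compact pieces $N$ and either $C_p$ or $B_p$. Since $b_2^+(X)>1$, no wall-crossing issues arise, so the relevant basic classes can be counted unambiguously.

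First I would analyze which $\mathrm{Spin}^c$ structures on $N$ extend over $B_p$ versus over $C_p$. The set of $\mathrm{Spin}^c$ structures on the lens space is an affine $\Z/p^2$-set, and characteristic covectors on $C_p$ realize all of them; however, $H^2(B_p;\Z)=\Z/p$, so only an index-$p$ subgroup of these extends over the rational ball. A direct computation using the plumbing intersection matrix shows that a characteristic covector $K$ of $C_p$ restricts to an extendable $\mathrm{Spin}^c$ structure on $\partial C_p$ precisely when $K\cdot u_i = 0$ for $1\le i\le p-2$ and $K\cdot u_{p-1}\equiv 0\pmod p$; the sharpened hypothesis $L\cdot u_{p-1}=\pm p$ picks out the minimal-norm representative, ensuring that the induced extension $\bar L$ on $X_p$ is characteristic and has the correct formal dimension to be a basic class. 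I would then invoke the gluing formula for Seiberg--Witten invariants across a positive-scalar-curvature boundary: the relative contribution from $C_p$ (which has $b_2^+=0$ and admits a PSC collar) is a sign $\pm 1$ coming from the unique reducible solution, and the analogous contribution from $B_p$ is also $\pm 1$; after fixing compatible homology orientations on $X$ and $X_p$ that restrict to the same orientation on $N$, these signs agree, yielding $SW_{X_p}(\bar L)=SW_X(L)$.

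The main obstacle is the careful $\mathrm{Spin}^c$-theoretic and sign-theoretic bookkeeping across the rational-ball replacement. Because $\pi_1(B_p)=\Z/p$ is nontrivial, a given extendable boundary $\mathrm{Spin}^c$ structure may have several extensions over $B_p$, and one must verify that the formal-dimension count on $X_p$ matches the basic-class count on $X$ predicted by the hypothesis on $L$. Equally delicate is the sign comparison: one must fix homology orientations on $X$ and $X_p$ compatibly with a chosen orientation of $N$ so that the $\pm 1$ reducible contributions from $C_p$ and $B_p$ do not introduce a spurious sign into the final equality. These two points --- extension theory of $\mathrm{Spin}^c$ structures over a non-simply-connected rational ball, and the sign conventions in the PSC gluing formula --- are where the technical heart of the theorem lies.
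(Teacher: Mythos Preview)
The paper does not supply a proof of this theorem; it is quoted from \cite{FS1, P1} as a black box and then applied. There is therefore nothing in the paper to compare your proposal against.

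That said, your outline is essentially the strategy of the original Fintushel--Stern argument: cut along the lens space $L(p^2,1-p)$, use its positive-scalar-curvature metric to force reducibility on the neck in a long-cylinder limit, and then match the relative Seiberg--Witten contributions from $C_p$ and from the rational ball $B_p$. Your identification of the intersection conditions $L\cdot u_i=0$ and $L\cdot u_{p-1}=\pm p$ with the extendability of the boundary $\mathrm{Spin}^c$ structure over $B_p$, together with the dimension/sign bookkeeping, is exactly where the work lies in \cite{FS1} (and in the generalized version \cite{P1}). So your proposal is on the right track as a reconstruction of the cited proof, but be aware that you are reproducing the literature rather than anything the present paper does.
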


\begin{thm}\label{SW2} \cite{FS1, P1} If a simply connected smooth $4$-manifold $X$ contains a configuration $C_{p}$, then the SW-invariants of $X_{p}$ are completely determined by those of $X$. That is, for any characteristic line bundle $\bar{L}$ on $X_{p}$ with $SW_{X_{p}}(\bar{L}) \ne 0$, there exists a characteristic line bundle $L$ on $X$ such that $SW_{X}(L) = SW_{X_{p}}(\bar{L})$.

\end{thm}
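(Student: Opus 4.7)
The plan is to mirror the strategy of Fintushel--Stern and J.~Park by writing $X$ and $X_p$ as two fillings of the common piece $W = X \setminus \mathrm{int}(C_p) = X_p \setminus \mathrm{int}(B_p)$ whose boundary is the lens space $L(p^2, 1-p)$, and then comparing Spin$^c$ structures and their SW moduli spaces across this boundary. The theorem asks, given $\bar L$ on $X_p$ with $SW_{X_p}(\bar L)\neq 0$, for a characteristic lift $L$ on $X$ realizing the same SW value; so my approach splits into (a) producing $L$, and (b) matching the invariants.

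For step (a), I would restrict $\bar L$ to $W$. Because $B_p$ is a rational homology ball with $\pi_1(B_p) = \mathbb{Z}_p$, the group $H^2(B_p;\mathbb{Z})$ is torsion of order $p$, and the Spin$^c$ structure $\bar L|_{B_p}$ is determined by its image on $\partial B_p = L(p^2,1-p)$. I then need to extend $\bar L|_W$ across the plumbing $C_p$ to a characteristic class on $X$ whose boundary restriction to $\partial C_p$ agrees with the same lens space Spin$^c$ structure. The plumbing $C_p$ has an explicit basis of sphere classes $u_1,\dots,u_{p-1}$ and intersection form that is diagonalizable over $\mathbb{Q}$, and the map $H^2(C_p;\mathbb{Z}) \to H^2(\partial C_p;\mathbb{Z}) = \mathbb{Z}_{p^2}$ surjects onto the relevant subgroup; choosing integer coefficients on the $u_i$ with the parity dictated by the adjunction requirement $L \cdot u_i \equiv u_i \cdot u_i \pmod{2}$ and the fixed boundary residue produces $L$.

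For step (b), I would invoke the gluing formula for Seiberg--Witten invariants across $L(p^2, 1-p)$, which for precisely this configuration is the content of Fintushel--Stern \cite{FS1} and Park \cite{P1}. The reason the gluing collapses cleanly is that $b_2^+(B_p) = 0$ and $\pi_1(B_p) = \mathbb{Z}_p$ is finite, so for each Spin$^c$ structure on $B_p$ the moduli space is either empty or a single reducible solution; correspondingly only those $L$'s whose restriction to $B_p$ is one of the finitely many torsion Spin$^c$ structures can contribute. Matching Spin$^c$ structures across the lens space via the surjection $H^2(\partial C_p) \to H^2(B_p)$ ensures that the only $L$ extending $\bar L|_W$ with a nontrivial gluing partner is the $L$ constructed in step (a), and the gluing formula then equates $SW_X(L)$ with $SW_{X_p}(\bar L)$ (see Theorem~\ref{SW1} for the forward direction that this matching is an honest bijection on basic classes).

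The main obstacle is step (b): carefully running the SW gluing theorem over the torus-sum Seifert-fibered neck $L(p^2,1-p)$, controlling the reducibles on the $B_p$ side, and verifying that no new basic classes on $X_p$ appear that are not accounted for by basic classes of $X$. This is the technical heart of the Fintushel--Stern/Park argument; the exposition here is a sketch, and the full analytic details of the gluing on the lens space neck are given in \cite{FS1, P1}.
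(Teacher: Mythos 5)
The paper does not prove this statement at all: Theorem~\ref{SW2} is quoted verbatim from Fintushel--Stern and J.~Park as a preliminary, with the proof deferred entirely to \cite{FS1, P1}. So there is no in-paper argument to compare yours against; what you have written is an outline of the original argument from those references, and your own closing paragraph concedes that the analytic heart is still being cited rather than proved. As a reconstruction of the intended proof strategy it is broadly faithful: the decomposition $X = W \cup C_p$, $X_p = W \cup B_p$ along $L(p^2,1-p)$, the extension of $\mathrm{Spin}^c$ structures across the two fillings, and the neck-stretching comparison of moduli spaces are indeed the skeleton of the Fintushel--Stern/Park proof.

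Two points in your sketch deserve correction if you intend to flesh it out. First, the mechanism that makes the gluing ``collapse cleanly'' is not that $\pi_1(B_p)$ is finite, but that $L(p^2,1-p)$ admits a metric of positive scalar curvature: this forces every finite-energy solution on the cylindrical end to limit to a reducible (flat) solution, and together with $b_2^+(C_p)=b_2^+(B_p)=0$ this is what reduces the comparison to a count of boundary-compatible $\mathrm{Spin}^c$ structures. (Also, calling the neck a ``torus-sum Seifert-fibered neck'' is misleading; nothing about torus sums enters here.) Second, your claim that the only extension $L$ of $\bar L|_W$ over $C_p$ with a nontrivial gluing partner is the one you construct is too strong as stated: the restriction $H^2(C_p;\mathbb{Z})\to H^2(\partial C_p;\mathbb{Z})$ is surjective with nontrivial kernel coming from $H^2(C_p,\partial C_p;\mathbb{Z})$, so $\bar L|_{\partial}$ has many characteristic extensions over $C_p$. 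The Fintushel--Stern argument singles out the contributing one by a dimension count (only the extension with $L\cdot u_i=0$ for $i\le p-2$ and $L\cdot u_{p-1}=\pm p$ has the right formal dimension, matching the hypotheses of Theorem~\ref{SW1}), not by $\mathrm{Spin}^c$ matching alone. Without that dimension argument your step (b) does not rule out spurious contributions.
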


In our construction we will only use the rational blowdown surgery along configuration $C_{2}$, i.e. the rational blowdowns along the $-4$ sphere. 

The following theorem of H. Endo and Y. Gurtas in 2010 \cite{EG} connects the lantern relation substitution to the rational blowdown surgical operation defined above. Namely, one can perform the topological surgery of rational blowdown via algebraic monodromy substitution in the context of Lefscehtz fibrations.

\begin{thm}\label{bd} Let $\varrho,\varrho'$ be positive relators of $\mathcal{M}_g$ 
and $M_{\varrho},M_{\varrho'}$ the corresponding Lefschetz fibrations over $S^2$, respectively.
If $\varrho'$ is obtained by applying a lantern substitution to $\varrho$, then the $4$-manifold $M_{\varrho'}$ is a rational blowdown of $M_{\varrho}$ along a configuration $C_2\subset M_{\varrho}$. 
\end{thm}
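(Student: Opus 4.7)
\smallskip
\noindent\textbf{Proof proposal.} The plan is to localize the surgery implementing the lantern substitution to a pair of 4-manifolds-with-boundary whose topology can be analyzed directly, then compare these local pieces to the standard model of a rational blowdown along $C_2$. Concretely, write $\varrho=\varrho_1\cdot t_{\delta_1}t_{\delta_2}t_{\delta_3}t_{\delta_4}\cdot\varrho_2$ and $\varrho'=\varrho_1\cdot t_{\gamma}t_{\beta}t_{\alpha}\cdot\varrho_2$, where $\delta_1,\dots,\delta_4$ bound an embedded $\Sigma_{0,4}\subset\Sigma_g$ and $\alpha,\beta,\gamma$ are the other three lantern curves inside this $\Sigma_{0,4}$. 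Decompose the base $S^2$ of each fibration into two disks, so that $M_{\varrho}=L\cup_\partial W$ and $M_{\varrho'}=L'\cup_\partial W$, where $W$ is the common Lefschetz fibration over a disk with monodromy word $\varrho_2\varrho_1$, and $L,L'$ are the local Lefschetz fibrations over disks with monodromies $t_{\delta_1}t_{\delta_2}t_{\delta_3}t_{\delta_4}$ and $t_{\gamma}t_{\beta}t_{\alpha}$ respectively. The lantern relation guarantees $\partial L=\partial L'$ as $\Sigma_g$-bundles over $S^1$, so it suffices to prove that $L'$ is obtained from $L$ by rational blowdown along an embedded $C_2\subset L$.

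\smallskip
\noindent Next I would exhibit the $-4$-sphere inside $L$. Since $\delta_1,\delta_2,\delta_3,\delta_4$ are disjoint and cobound an embedded $\Sigma_{0,4}$ in a regular fiber $\Sigma_g\times\{p\}\subset L$, we may cap off each boundary circle $\delta_i$ by its Lefschetz thimble (the 2-disk swept out as the vanishing cycle collapses onto the nodal critical point of the $i$-th singular fiber). The resulting closed surface has Euler characteristic $\chi(\Sigma_{0,4})+4=2$, so it is a smoothly embedded sphere $S\subset L$. A standard framing calculation for capping a surface by Lefschetz thimbles (each thimble shifts the surface framing by $-1$ relative to the fiber framing) shows $[S]\cdot[S]=-4$. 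A regular neighborhood $\nu(S)$ is then the disk bundle of Euler number $-4$ over $S^2$, i.e.\ precisely the configuration $C_2$, with $\partial\nu(S)=L(4,1)$.

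\smallskip
\noindent The heart of the argument is then to show $(L\setminus\operatorname{int}\,C_2)\cup_{L(4,1)}B_2\cong L'$ rel boundary, so that gluing $W$ on both sides yields $(M_\varrho)_2\cong M_{\varrho'}$. I would carry this out in Kirby calculus: draw $L$ as $\Sigma_g\times D^2$ with four $(-1)$-framed (relative to the fiber framing) $2$-handles attached along $\delta_1,\dots,\delta_4$, and $L'$ as $\Sigma_g\times D^2$ with three $2$-handles along $\alpha,\beta,\gamma$. Only the 4-dimensional neighborhood of the $\Sigma_{0,4}$ piece participates, so the problem reduces to a handlebody identity in the local model $\Sigma_{0,4}\times D^2$ plus the respective Lefschetz handles. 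Excising $\nu(S)$ from $L$ and gluing in the standard rational ball $B_2$ then produces, after a sequence of handle slides and cancellations, the handle diagram of $L'$; alternatively one identifies a rational ball $B_2\subset L'$ (supported away from the three thimbles) such that $L'\setminus B_2\cong L\setminus C_2$ rel boundary, giving the same conclusion.

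\smallskip
\noindent \textbf{Main obstacle.} The difficulty lies in the Kirby-calculus step: explicitly tracking the framings of the Lefschetz $2$-handles inside a nontrivial genus fiber, isolating the $-4$-sphere neighborhood and the prospective rational ball, and exhibiting the explicit handle moves realizing $(L\setminus C_2)\cup B_2\cong L'$ in a way that is the identity on the common boundary, so that the global gluing with $W$ makes sense and yields $M_{\varrho'}$.
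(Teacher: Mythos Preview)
The paper does not prove this theorem; it is quoted as a result of Endo and Gurtas \cite{EG} and used as a black box. There is therefore no ``paper's own proof'' to compare against.

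That said, your outline is essentially the strategy of the original Endo--Gurtas argument: localize the substitution over a subdisk of the base, build the $-4$-sphere in $L$ by capping the embedded $\Sigma_{0,4}$ in a regular fiber with the four Lefschetz thimbles, and then verify the rational blowdown identity $(L\setminus C_2)\cup_{L(4,1)} B_2\cong L'$ via a handlebody argument. Your identification of the main obstacle is accurate: the content of the theorem is precisely the explicit Kirby-calculus comparison of the two local models, carried out relative to the common boundary so that the ambient piece $W$ glues back unchanged. Endo and Gurtas do exactly this by drawing the two local Lefschetz handlebodies and exhibiting the handle moves; your sketch stops just short of that computation, but what you have written is a correct and complete plan.
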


\medskip

Let us consider the following three cases of lantern substitution in $\Gamma_2$.\\

\begin{itemize}

\item Making the lantern substitution $c_{5} c_{1}^2 c_{5}$ for $\ c_{3} \delta x $.

 $ {c_{5}}c_{4}c_{3}c_{2}c_{1}c_{5}c_{4}c_{3}c_{2}{c_{1}}$ \\
$ \sim \ {}_{c_{5}}(c_{4}) \cdot c_{3}c_{2}c_{5}c_{1}{c_{5}c_{1}}c_{4}c_{3}\cdot{}_{c_{1}^{-1}}(c_{2})$ 
\\
$ \sim \ {}_{c_{5}}(c_{4}) \cdot c_{3}c_{2}\cdot {c_{5}^{2}c_{1}^{2}} \cdot c_{4}c_{3}\cdot{}_{c_{1}
^{-1}}(c_{2})$ \\
$ \overset{L} {\rightarrow} \ {}_{c_{5}}(c_{4}) \cdot c_{3}c_{2} \cdot c_{3} \delta x \cdot c_{4}c_{3}
\cdot{}_{c_{1}^{-1}}(c_{2})$ \\

\item Making the lantern substitution $c_{1} c_{3} c_{1} c_{3}$ for $ \bar{k} \bar{h} c_{5}$.

 $c_{5}c_{4}c_{3}c_{2}c_{1}{c_{5}c_{4}}c_{3}c_{2}c_{1}$ \\
$ \sim \ c_{5} c_{4} c_{5} {c_{3}} c_{2} c_{4} c_{1} c_{3} c_{2} {c_{1}}$ \\
$ \sim \ c_{5} c_{4} c_{5} \cdot {}_{c_{3}} (c_{2} c_{4}) \cdot {c_{1}^2 c_{3}^2}  \cdot {}_{c_{1}^{-1}}(c_{2}) $ \\
$ \overset{L} {\rightarrow} \ c_{5} c_{4} c_{5} \cdot {}_{c_{3}} (c_{2} c_{4}) \cdot \bar{k} \bar{h} c_{5} \cdot  {}_{c_{1}^{-1}}(c_{2}) $ \\

\item Making the lantern substitution $c_{3} c_{5}^2 c_{3}$ for $c_{1}kh$.

 ${c_{5}}c_{4}c_{3}c_{2}{c_{1}}c_{5}c_{4}{c_{3}}c_{2}c_{1}$ \\
$ \sim \ {}_{c_{5}} (c_{4}) \cdot {c_{5} c_{3}} c_{2} c_{5} c_{3} \cdot (c_{4})_{c_3} \cdot  c_{1} c_{2} c_{1}$ \\
$ \sim \ {}_{c_{5}} (c_{4}) \cdot {}_{c_{3}} (c_{2}) \cdot {c_{3}^2 c_{5}^2} \cdot {}_{c_{3}^{-1}}(c_{4}) \cdot  c_{1} c_{2} c_{1}$ \\
$ \overset{L} {\rightarrow} \ {}_{c_{5}} (c_{4}) \cdot {}_{c_{3}} (c_{2}) \cdot c_{1}  k h \cdot {}_{c_{3}^{-1}}(c_{4}) \cdot c_{1} c_{2} c_{1}$ \\
\end{itemize}

\begin{figure}[ht]
\begin{center}
\resizebox{!}{3cm}{\includegraphics{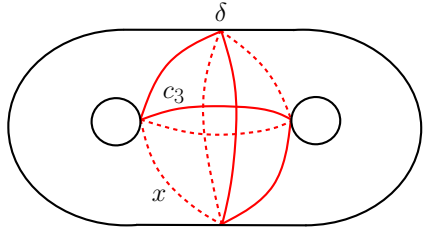}}
\caption{Special curves $x$, $\delta$}
\label{fig:Special3}
\end{center}
\end{figure}

\begin{figure}[ht]
\begin{center}
\includegraphics[scale=.41]{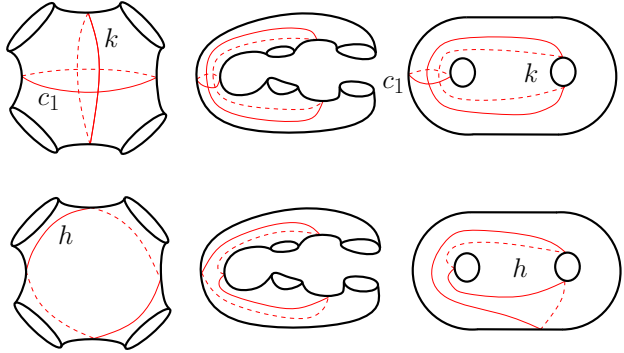}
\caption{Special curves $k$, $h$}
\label{fig:Special1}
\end{center}
\end{figure}

\begin{figure}[ht]
\begin{center}
\includegraphics[scale=.41]{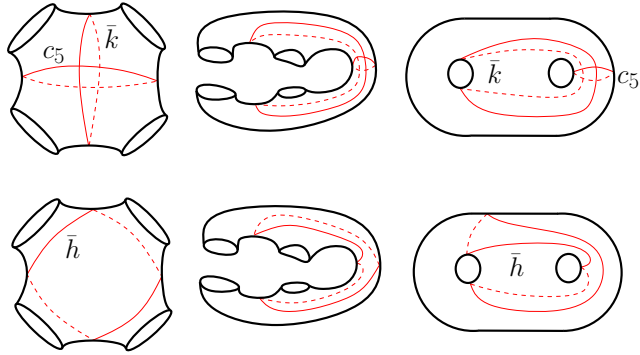}
\caption{Special curves $\bar{k}$, $\bar{h}$}
\label{fig:Special2}
\end{center}
\end{figure}

We will also need the following lemmas, which are due to R. Gompf, to analyze the symplectic $4$-manifolds constructed in Section 3. For the proof we refer the reader to \cite{Gompf, Dorf1}.

\begin{lem}\label{n=1}
Let $(X,V_X)$ be a relatively minimal smooth pair with $V_X$ an embedded $-4$ sphere.  If $X$ contains a smoothly embedded exceptional sphere transversely intersecting the hypersurface $V_X$ in a single positive point, then the manifold obtained under $-4$ blow-down of $V_X$ is diffeomorphic to the blow-down of $X$ along this sphere.
\end{lem}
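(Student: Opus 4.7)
The plan is to localize the problem to a regular neighborhood $N$ of the nodal surface $V_X \cup E$. Both operations --- the $-4$ blowdown of $V_X$ and the ordinary blowdown of the exceptional sphere $E$ --- are surgical modifications supported in $N$: the former replaces $C_2 = \mathrm{nbhd}(V_X)$ with the rational ball $B_2$, while the latter replaces $\mathrm{nbhd}(E)$ with a $4$-ball. It thus suffices to exhibit a diffeomorphism between the two resulting modifications of $N$ that restricts to the identity on $\partial N$, since we can then paste across the unchanged complement $X\setminus N$ to obtain the desired global diffeomorphism.

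First I would give $N$ an explicit Kirby description: because $V_X$ is a $-4$-sphere and $E$ is a $-1$-sphere meeting $V_X$ transversely in a single positive point, $N$ is the plumbing of the disk bundles $D_{-4}$ and $D_{-1}$ over $S^2$ at a single plumbing point. Equivalently, $N$ is built from one $0$-handle together with two $2$-handles attached along a positive Hopf link with framings $-4$ and $-1$. Ordinarily blowing down $E$ in $N$ is the standard Kirby move: removing the $-1$-framed unknot increases the framing of the other $2$-handle by $+1$ (since the two attaching circles link once) and unknots it, leaving a single $0$-handle with a $2$-handle attached to an unknot with framing $-3$, i.e.\ the disk bundle $D_{-3}$ over $S^2$ of Euler number $-3$. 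In this description the proper transform of $V_X$ appears as the resulting $-3$-sphere zero-section.

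The key technical step is then to show that the $-4$ rational blowdown of $V_X$ in $N$ also yields $D_{-3}$, with the same boundary identification. The rational blowdown replaces $C_2$ with the Casson--Harer rational ball $B_2$, whose standard handle decomposition consists of one $1$-handle (a dotted circle) and one $2$-handle passing twice through that $1$-handle, so that $\partial B_2 = L(4,1) = \partial C_2$. After the replacement, the $-1$-framed $E$-handle is re-attached along the image of the meridian of $V_X$ in $\partial B_2 = L(4,1)$. A sequence of handle slides brings this newly re-attached $2$-handle into cancelling position with the $1$-handle of $B_2$; performing the cancellation and tracking framings shows that the unique surviving $2$-handle is attached to an unknot with framing $-3$. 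Hence $(N\setminus C_2)\cup B_2 \cong D_{-3}$, matching the ordinary-blowdown side, and the diffeomorphism is the identity on $\partial N$.

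The main obstacle I expect is precisely this last Kirby-calculus identification: correctly expressing the attaching curve of the re-attached $E$-handle as a curve in $L(4,1)$, selecting the right slides to bring $B_2$'s $1$-handle into cancelling position with it, and verifying that the framing bookkeeping delivers exactly $-3$. This is essentially the $p=2$ case of the monodromy-substitution / rational-blowdown correspondence invoked in Theorem~\ref{bd}, and once it is in hand, gluing the local diffeomorphism $(N\setminus C_2)\cup B_2 \cong D_{-3}$ across $X\setminus N$ produces the required global diffeomorphism between the $-4$ blowdown of $V_X$ and the ordinary blowdown of $E$.
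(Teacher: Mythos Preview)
The paper does not supply its own proof of this lemma: immediately before stating it, the author writes that the result is due to Gompf and refers the reader to \cite{Gompf, Dorf1} for the argument. So there is no in-paper proof to compare your proposal against.

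That said, your outline is sound and is essentially the standard way this fact is established in the literature. Localizing to the plumbing $N$ of $D_{-4}$ and $D_{-1}$, verifying that the ordinary blow-down of the $-1$-component yields $D_{-3}$, and then checking via a Kirby picture for $B_2$ that the rational blow-down of the $-4$-component produces the same $D_{-3}$ rel $\partial N$ is exactly the argument one finds (in varying levels of detail) in Gompf's and Dorfmeister's treatments. The one place to be careful is the step you flag yourself: when you excise the $-4$-handle and glue in $B_2$, the curve along which the surviving $-1$-framed $2$-handle is reattached is a meridian of the old $-4$-circle, and you must track it correctly into the standard $B_2$ diagram (dotted circle plus a $2$-handle running twice through it) before sliding and cancelling. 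Once that bookkeeping is done the framing indeed comes out to $-3$, and the rel-boundary diffeomorphism glues over $X\setminus N$ as you describe. The relative-minimality hypothesis plays no role in the local argument; it is a standing assumption in the surrounding discussion rather than an ingredient of the proof.
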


\begin{lem}\label{n=2}
Let $(X,V_X)$ be a relatively minimal smooth pair with $V_X$ an embedded $-4$ sphere. If $X$ contain two disjoint smoothly embedded exceptional spheres each transversely intersecting the hypersurface $V_X$ in a single positive point, then the manifold obtained under $-4$ blow-down of $V_X$ is diffeomorphic to the blow-down of $X$ along one of these spheres.  
  
\end{lem}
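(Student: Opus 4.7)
Proof plan:

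The plan is to reduce the statement to the single-exceptional-sphere case established in Lemma \ref{n=1}. Label the two disjoint embedded exceptional spheres $E_1, E_2 \subset X$; by hypothesis, each $E_i$ is a $-1$ sphere meeting $V_X$ transversely in exactly one positive point, and $E_1 \cap E_2 = \emptyset$. First I would verify that each triple $(X, V_X, E_i)$ satisfies the full hypothesis of Lemma \ref{n=1}: the pair $(X, V_X)$ is relatively minimal by assumption, $V_X$ is an embedded $-4$ sphere, and $E_i$ is an exceptional sphere transversely intersecting $V_X$ in a single positive point. Relative minimality is preserved by the presence of the other exceptional sphere $E_j$, since $E_j$ meets $V_X$ nontrivially and so is not an exceptional sphere ``disjoint from $V_X$''.

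Next I would apply Lemma \ref{n=1} with the exceptional sphere $E = E_1$ to conclude that the $-4$ blow-down of $V_X$ is diffeomorphic to the ordinary blow-down of $X$ along $E_1$. Applying Lemma \ref{n=1} a second time with $E = E_2$ yields the analogous conclusion with $E_2$ in place of $E_1$. Taken together, these two applications show that the manifold obtained by $-4$ blowing down $V_X$ is diffeomorphic to the ordinary blow-down of $X$ along either one of the two exceptional spheres, which is exactly the desired conclusion.

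The main (and only) obstacle is confirming that the construction used in the proof of Lemma \ref{n=1} — a sequence of local handle slides / Kirby moves inside a tubular neighborhood of $V_X \cup E_i$, which identifies that neighborhood with the boundary connected sum of the appropriate disk bundles and thereby turns the rational blow-down of $V_X$ into an ordinary blow-down of $E_i$ — is not disturbed by the presence of the second exceptional sphere $E_j$. This is immediate, since $V_X \cap E_i$ and $V_X \cap E_j$ are distinct points of $V_X$: after arranging everything to be transverse, the local moves involved in applying Lemma \ref{n=1} to $(X, V_X, E_i)$ can be carried out in a neighborhood of $V_X \cup E_i$ disjoint from $E_j$, so that $E_j$ is carried along unchanged by the resulting diffeomorphism and the hypothesis of Lemma \ref{n=1} continues to hold throughout.
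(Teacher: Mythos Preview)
The paper does not actually prove this lemma; it states both Lemma~\ref{n=1} and Lemma~\ref{n=2} as results due to Gompf and simply refers the reader to \cite{Gompf, Dorf1} for proofs. So there is no in-paper argument to compare your approach against.

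Your reduction to Lemma~\ref{n=1} is correct: the hypotheses of Lemma~\ref{n=1} impose no uniqueness requirement on the exceptional sphere, so you may apply it once with $E=E_1$ and once with $E=E_2$, and each application separately yields the conclusion of Lemma~\ref{n=2}. Your third paragraph is therefore unnecessary, and in fact contains a small slip --- a neighborhood of $V_X\cup E_i$ cannot be made disjoint from $E_j$, since $E_j$ meets $V_X$ --- but this does no damage, because nothing in the argument requires tracking $E_j$ through the diffeomorphism. The first two paragraphs already constitute a complete proof.
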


\subsection{Symplectic Minimality and Symplectic Kodaira Dimension}\label{kd}

The notion of symplectic Kodaira dimension was introduced by D. McDuff and D. Salamon in 1996 \cite{MS} and discussed in detail by T.J. Li in \cite{Li}. We shall recall the definition of Kodaira dimension. In order to do so we first need to recall that a symplectic 4-manifold is called symplectically minimal if it does not contain any embedded symplectic spheres of square $-1$. For a given symplectic 4-manifold $(X,\omega)$, one can acquire its minimal model $(X',\omega')$ by blowing down a maximal disjoint collection of symplectic $(-1)$-spheres in $X$.  

\begin{defn} \label{sym Kod}
Let $(X,\omega)$ be a symplectic 4-manifold with minimal model $(X',\omega')$ and let $K_{X'} \in H^2(X';\mathbb{Z})$ denote the canonical class of $(X',\omega')$. Then the Kodaira dimension $\kappa^s(X,\omega)$ is 

$$
\kappa^s(X,\omega)=\begin{cases} \begin{array}{lll}
-\infty & \hbox{ if $K_{X'}\cdot [\omega']<0$ or} & K_{X'}\cdot K_{X'}<0,\\
0& \hbox{ if $K_{X'}\cdot [\omega']=0$ and} & K_{X'}\cdot K_{X'}=0,\\
1& \hbox{ if $K_{X'}\cdot [\omega']> 0$ and} & K_{X'}\cdot K_{X'}=0,\\
2& \hbox{ if $K_{X'}\cdot [\omega']>0$ and} & K_{X'}\cdot K_{X'}>0.\\
\end{array}
\end{cases}
$$

\end{defn}

It was shown in \cite{DZ} that the symplectic Kodaira dimension coincides with the complex Kodaira dimension when both are defined.

\section{Analysis of genus 2 Lefschetz fibrations}

We now discuss the three different ways to describe the genus 2 Lefschetz fibration on $K3\#2\CPb$ over $S^2$. The first way is to obtain the $K3\#2\CPb$ fibration as a double covering of $\mathbb{F}_1 = \CP\#\CPb$ branched along a smooth algebraic curve in the linear system $|6L|$, where $L$ is a line in $\CP$ avoiding the blown-up point. This way of thinking about the $K3\#2\CPb$ fibration is discussed in detail in Lemma 6 of Akhmedov-Park \cite{AP}. Another way is to obtain $K3\#2\CPb$ fibration by holomorphically blowing up (the ordinary blow ups) twice the genus 2 pencil where the pencil itself is in turn acquired by the identity fiber summation of two copies of elliptic fibration on $E(1)= \CP\#9\CPb$ along a regular torus fiber. Geometrically inclined readers will enjoy reading Proposition 7 of Akhmedov-Park \cite{AP} where this way of construction is given in detail. Finally, we portray here yet another way of thinking about $K3\#2\CPb$ over $\mathbb{S}^2$. This new way is to obtain $K3\#2\CPb$ fibration by rationally blowing up twice the genus 2 Lefschetz fibration where the genus 2 Lefschetz fibration itself is in turn acquired by the identity fiber summation of genus 2 Matsumoto's fibration on $\mathbb{S}^2 \times \mathbb{T}^2 \# 4 \CPb$ with genus 2 Lefschetz fibration on $\CP \# 13\CPb$.

\prop\label{twol}
The genus two Lefschetz fibration on $K3\#2\CPb$ over $\mathbb{S}^2$ can be acquired through performing two rational blowups on an untwisted fiber sum of genus two Matsumoto's fibration on $\mathbb{S}^2 \times \mathbb{T}^2 \# 4 \CPb$ with the rational genus two Lefschetz fibration on $\CP \# 13\CPb$.

\begin{proof}

Consider untwisted fiber sum (fiber sum with the identity map for the gluing diffeomorphism) of the Matsumoto's genus 2 Lefschetz fibration on $\mathbb{S}^2 \times \mathbb{T}^2 \# 4 \CPb$ given by the positive relation $(B_0B_1B_2\delta)^2 = 1$ in Section with the genus 2 rational Lefschetz fibration on $\CP \# 13\CPb$ given by the positive relation $(c_1c_2c_3c_4{c_5}^2c_4c_3c_2c_1)^2 = 1$ in Section along a generic fiber $\Sigma_2$. As the concatenation in the mapping class group corresponds to the symplectic fiber summing, the monodromy factorization of the resulting the Lefschetz fibration would be,

\begin{align}
(B_0B_1B_2\delta)^2 \cdot (c_1c_2c_3c_4{c_5}^2c_4c_3c_2c_1)^2 = 1 
\end{align}

Now, we will perform two rational blowups via lantern relation substitutions where we find $c_3 \delta x = \delta x c_3 = x c_3 \delta$ through elementary moves on the monodromy and substitute it with $c_1^2c_5^2$.\\

\par Here and subsequently, when we perform monodromy computations, we denote the lantern relation substitution by $ \overset{L} {\rightarrow} \ $, the braid relation substitution by $ \overset{B} {\rightarrow} \ $, the conjugation by $ \overset{C} {\rightarrow} \ $, and the arrangement using the commutativity by $\sim$ respectively.\\

$ (B_0 B_1 B_2 \delta)^2 \cdot (c_1c_2c_3c_4{c_5}^2c_4c_3c_2c_1)^2 = 1 $\\

$ \sim B_0 B_1 B_2 \delta B_0 B_1 B_2 \delta \cdot c_1c_2c_3c_4{c_5}^2c_4c_3c_2c_1 \cdot c_1c_2c_3c_4{c_5}^2c_4c_3c_2c_1 $\\

$ \sim B_0 B_1 \cdot {}_{c_{3}^{-1}}(x) \cdot \delta B_0 B_1 \cdot {}_{c_{3}^{-1}}(x) \cdot \delta \cdot c_1c_2c_3c_4{c_5}^2c_4c_3c_2c_1 \cdot c_1c_2c_3c_4{c_5}^2c_4c_3c_2c_1 \\ \{B_2 := {}_{c_{3}^{-1}}(x)\}$\\

$ \sim B_0 B_1 \cdot (c_1c_2c_3c_4{c_5}^2c_4c_3c_2c_1) \cdot {}_{c_{3}^{-1}}(x) \cdot \delta B_0 B_1 \cdot (c_1c_2c_3c_4{c_5}^2c_4c_3c_2c_1) \cdot {}_{c_{3}^{-1}}(x) \cdot \delta $\\ \{$(c_1c_2c_3c_4{c_5}^2c_4c_3c_2c_1)$ is central\}\\

$ \sim B_0 B_1 \cdot c_1c_2c_3c_4{c_5}^2 c_4 \cdot {}_{c_{3}}(c_2) \cdot c_1 \cdot c_3 \cdot {}_{c_{3}^{-1}}(x) \cdot \delta B_0 B_1 \cdot c_1c_2c_3c_4{c_5}^2 c_4 \cdot {}_{c_{3}}(c_2) \cdot c_1 \cdot c_3 \cdot {}_{c_{3}^{-1}}(x) \cdot \delta  $ \\

$ \sim B_0 B_1 \cdot c_1c_2c_3c_4{c_5}^2 c_4 \cdot {}_{c_{3}}(c_2) \cdot c_1 \cdot x c_3 \delta \cdot B_0 B_1 \cdot c_1c_2c_3c_4{c_5}^2 c_4 \cdot {}_{c_{3}}(c_2) \cdot c_1 \cdot x c_3 \delta $ \\

$ \overset{L} {\rightarrow} \ B_0 B_1 \cdot c_1c_2c_3c_4{c_5}^2 c_4 \cdot {}_{c_{3}}(c_2) \cdot c_1 \cdot c_1^2 c_5^2 \cdot B_0 B_1 \cdot c_1c_2c_3c_4{c_5}^2 c_4 \cdot {}_{c_{3}}(c_2) \cdot c_1 \cdot x c_3 \delta  $ \\

$ \overset{L} {\rightarrow} \ B_0 B_1 \cdot c_1c_2c_3c_4{c_5}^2 c_4 \cdot {}_{c_{3}}(c_2) \cdot c_1 \cdot c_1^2 c_5^2 \cdot B_0 B_1 \cdot c_1c_2c_3c_4{c_5}^2 c_4 \cdot {}_{c_{3}}(c_2) \cdot c_1 \cdot c_1^2 c_5^2 $ \\

$ \sim (B_0 B_1 \cdot c_1c_2c_3c_4{c_5}^2 c_4 \cdot {}_{c_{3}}(c_2) \cdot c_1 \cdot c_1^2 c_5^2)^2 = \textbf{X(0)}$ \\

Topologically, the lantern relation substitution in the direction of finding $c_3 \delta x = \delta x c_3 = x c_3 \delta$ through elementary moves on the monodromy and substituting it with $c_1^2c_5^2$ has the effect of rational blowup where one replaces the rational homology 4-ball with the tubular neighborhood of a (-4)-sphere. \cite{EG}

After two rational blowups, one arrives at the genus 2 Lefschetz fibration with the above monodromy $X(0)$ for the positive relation having 30 non-separating vanishing cycles which is transitive and has no separating vanishing cycles. (All singular fibers of genus 2 Lefschetz fibration $X(0)$ are irreducible)

By the Siebert and Tian's Theorem A in \cite{ST} on sufficient condition for holomorphicity of genus 2 Lefscetz fibrations over the $\mathbb{S}^2$. We know that this genus 2 Lefschetz fibration is isomorphic to a holomorphic genus 2 Lefschetz fibration.

As Chakiris \cite{Chakiris} assertion says every holomorphic fibrations of genus 2 without virtual reducible singular fibers is a fiber sum of three typical fibration (in our case either multiple of 20 or 30 irreducible singular fibers). This genus 2 holomorphic Lefschetz fibration with 30 irreducible singular fibers is clearly isomorphic to the fibration of $K3\#2\CPb$ with the above monodromy factorization.

\end{proof}

\rmk By using the Theorem 3.5 in Auroux's \cite{Ar1} reformulation of the holomorphicity result obtained by Siebert and Tian in terms of the mapping class group factorizations indicates $X(0)$ is Hurwitz equivalent to a factorization of the form $(c_5c_4c_3c_2c_1)^6=1$ and thus the fibration is isomorphic to the one given in Akhmedov-Park's Lemma 6 and proposition 7 \cite{AP}. \\

\par Now, we will provide propositions for the characterization of genus 2 Lefschetz fibrations with 20 irreducible singular fibers $(n,s)=(20,0)$ and 18 irreducible singular fibers and 1 reducible singular fiber $(n,s)=(18,1)$. Such characterizations of the genus 2 Lefschetz fibrations up to diffeomorphism will aid us in section 4 where we will consider all the possible decompositions of our decomposable exotic 4-manifolds examples. The proofs are adapted from Y. Sato's strategy which was effective in showing characterizations of seven and eight singular fibers genus 2 Lefschetz fibrations. (cf. \cite{Sato2})

\par Suppose that a genus 2 Lefschetz fibration $f:X \to \mathbb{S}^2$ has $n$ irreducible singular fibers and $s$ reducible singular fibers. Since the abelianization $\Gamma_2^{ab}$ of the mapping class group $\Gamma_2$ is isomorphic to $\Z/10\Z$ (cf. \cite{Matsumoto}), we have $n+2s \equiv 0 \pmod{10}$. As every singular fiber contributes 1 to the Euler characteristics $e(X)$, we have $e(X)= \# {\emph{singular fibers}} + e(\mathbb{S}^{2}) e(\Sigma_{2}) = n+s-4$. Moreover, for the signature $\sigma(X)$, we have $\sigma(X)=-3n/5-s/5$ by the Matsumoto's local signature formula \cite{Matsumoto}. 

\begin{prop}[Characterization of genus 2 Lefschetz fibration with 20 irreducible singular fibers]\label{200}
Let $f:X \to \mathbb{S}^2$ has 20 irreducible singular fibers, then X is diffeomorphic to $\CP \# 13\CPb$.
\end{prop}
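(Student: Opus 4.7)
The plan is to identify the proposition as a direct combination of two results that the paper has already imported: Siebert-Tian's holomorphicity criterion for genus two Lefschetz fibrations, and Chakiris's classification of holomorphic genus two fibrations without reducible fibers. First I would record the topological invariants determined by $(n,s)=(20,0)$. From the formulas assembled just before the statement, one gets $e(X) = n+s-4 = 16$ and $\sigma(X) = -3n/5 - s/5 = -12$, which agree with $e(\CP\#13\CPb) = 16$ and $\sigma(\CP\#13\CPb)=-12$; so the target is at least plausible on the level of characteristic numbers, and this also fixes $b_2^+(X) = 1$.

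Next I would observe that $(n,s)=(20,0)$ forces every vanishing cycle of $f$ to be non-separating. By the classification of simple closed curves on $\Sigma_2$ (the change-of-coordinates principle, stated in \cite{FM} and already quoted in the paper), any two non-separating simple closed curves are related by an element of $\Gamma_2$, so the vanishing cycles form a single mapping class orbit. This is the transitivity hypothesis needed for Siebert-Tian's Theorem A \cite{ST}, from which I conclude that $f$ is isomorphic, as a Lefschetz fibration, to a holomorphic genus two Lefschetz fibration over $\PP$; this is exactly the invocation already used in the proof of Proposition \ref{twol}.

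Now I would appeal to Chakiris's theorem \cite{Chakiris}: every holomorphic genus two fibration over $\PP$ with no reducible singular fibers is a fiber sum of the three typical holomorphic fibrations whose positive relations are displayed in Section 2.3, and whose total spaces are $\CP\#13\CPb$, $K3\#2\CPb$, and the Horikawa surface $H$, with $20$, $30$, and $40$ irreducible singular fibers respectively. Since fiber sum is additive on the number of singular fibers, a total count of exactly $20$ admits only the decomposition consisting of a single copy of the $20$-singular-fiber fibration, which is the one on $\CP\#13\CPb$. Isomorphism of Lefschetz fibrations implies in particular diffeomorphism of total spaces, so $X$ is diffeomorphic to $\CP\#13\CPb$.

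The main point that needs to be handled with care is the invocation of Siebert-Tian: I must make sure the hypothesis is genuinely met, i.e., that transitivity of the vanishing cycles is the right condition and that no separating cycle is secretly required or forbidden. Once that step is justified, the remainder is purely a bookkeeping argument on the singular fiber count, so I expect no further obstacle. The same template will then carry over almost verbatim to the companion proposition for $(n,s) = (18,1)$, except that there one must enumerate the fiber sum decompositions on $20a + 30b + 40c + (\text{contributions from reducible fiber components})$ compatible with exactly one separating vanishing cycle.
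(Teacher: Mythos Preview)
Your approach via Siebert--Tian and Chakiris is genuinely different from the paper's, which instead argues through Seiberg--Witten theory and Sato's geography of non-minimal genus~2 Lefschetz fibrations \cite{Sato1} to pin down $(b_2^+,b_2^-,b_1)=(1,13,0)$ and then force $X$ to be a rational surface. Your route is exactly the template the paper uses inside the proof of Proposition~\ref{twol}, but there the fibration is a specific one whose monodromy has been written down explicitly, whereas here $f$ is an \emph{arbitrary} fibration with $(n,s)=(20,0)$.

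The gap is in your verification of transitivity. ``Transitive monodromy'' in Siebert--Tian's Theorem~A is a condition on the monodromy \emph{group} of the particular fibration: the subgroup of $\Gamma_2$ generated by the Dehn twists about the vanishing cycles must surject onto $S_6$ under the natural map $\Gamma_2\to Sp(4,\Z/2)\cong S_6$. Your argument---that any two nonseparating simple closed curves lie in a single $\Gamma_2$-orbit---is a statement about the ambient mapping class group and holds trivially for \emph{every} genus~2 Lefschetz fibration with only irreducible fibers; it says nothing about the monodromy group of this particular $f$ and so does not establish the Siebert--Tian hypothesis. Closing this gap for an arbitrary $(20,0)$ fibration is not obviously easier than the paper's direct argument. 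Your side remark that $e$ and $\sigma$ alone ``fix $b_2^+(X)=1$'' has the same underlying issue: those invariants only give $b_2^+=1+b_1$, and the paper spends real effort ruling out $b_1>0$.

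Finally, your proposed extension to $(n,s)=(18,1)$ cannot go through Siebert--Tian at all, since that theorem explicitly requires all singular fibers to be irreducible, and Chakiris's classification likewise treats only fibrations without reducible fibers. For Proposition~\ref{181} the paper again uses the Seiberg--Witten/geography method.
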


\begin{proof}
Let $f:X \to \mathbb{S}^2$ be a genus 2 Lefschetz fibration with $n$ irreducible singular fibers and $s$ reducible singular fibers.
As the fibration we are interested in has $(20,0)$ for $(n,s)$ pair, its Euler characteristic and signature numbers are equal to $e(X)=16$ and $\sigma(X)=-12$ with $c_1^2(X)= 2e(X)+3\sigma(X)=-4$. Next, we will determine $(b_2^+,b_2^-,b_1)$ for $X$. Since $2-2b_1+2b_2^+=e+\sigma=4$ we obtain $b_2^+=b_1+1$. Let $H$ be the subspace of $H_1(\Sigma_2;\R)$ generated by the vanishing cycles of X. Here, $\Sigma_2$ denotes the reference fiber of genus 2. Since a Lefschetz fibration over $\mathbb{S}^2$ must have a nonseparating vanishing cycle \cite{Stipsicz1}, we have $dim \ H\geq 1$. And since $H_1(X;\R)=H_1(\Sigma_2;\R)/H$, we acquire that $b_1(X)=4-dim \ H\leq 3$. Thus, we have that $1\leq b_2^+=b_1+1\leq 4$, and therefore gives four possible triple for $(b_2^+,b_2^-,b_1)=(1,13,0),(2,14,1),(3,15,2)$ or $ (4,16,3)$. Suppose that $b_2^+> 1$. We will show this is impossible as $K_X^2=c_1^2=3\sigma+2e=-4$. Hence it follows from Theorem 0.2 in \cite{Taubes2} that $X$ is not minimal, that is, $f:X \to \mathbb{S}^2$ is a non-minimal genus 2 Lefschetz fibration with $(n,s)=(20,0)$. However, by the Table 1, of the geography of non-minimal genus 2 Lefschetz fibrations over $\mathbb{S}^2$ \cite{Sato1}, there is not any $b_2^+> 1$ non-minimal genus 2 Lefschetz fibration over $\mathbb{S}^2$ with  $(n,s)=(20,0)$. Therefore, a genus 2 Lefschetz fibration $f:X \to \mathbb{S}^2$ with  $(n,s)=(20,0)$ satisfies $(b_2^+,b_2^-,b_1)=(1,13,0)$.

Next we will show $X$ is a rational surface. Suppose that $X$ is not a rational surface. Let $\tilde{X}$ be the minimal model of $X$. Since  $b_2^+(\tilde{X})=1$ and  $b_1(\tilde{X})=0$, we have that $c_1^2(\tilde{X}) = 3\sigma(\tilde{X}) + 2e(\tilde{X}) = 5b_2^+(\tilde{X})-b_2^-(\tilde{X}) - 4b_1(\tilde{X}) + 4  = 9-b_2^-(\tilde{X})$. Moreover, since $\tilde{X}$ is a minimal symplectic 4-manifold with $b_2^+=1$ and $\tilde{X}$ is not rational nor ruled, it follows from \cite{Liu1} that $\tilde{X}$ satisfies $c_1^2(\tilde{X}) \geq 0 $. Hence, we have $b_2^-(\tilde{X}) \leq 9$. Since $X$ is not rational nor ruled and $X$ admits a genus 2 Lefchetz fibration over $\mathbb{S}^2$, it follows from Theorem 3.1 \cite{Sato1} that $X$ contains at most two 2-spheres with self-intersection number -1 essentially. Therefore, we have that $b_2^-(X) \leq 11$. This is in contradiction with $b_2^-(X)=13$. Thus, $X$ is a rational surface, and $X$ is diffeomorphic to $\CP\#13\CPb$. 
\end{proof}

\begin{prop}[Characterization of genus 2 Lefschetz fibration with 18 irreducible singular fibers and 1 reducible singular fiber]\label{181}
Let $f:X \to \mathbb{S}^2$ has 18 irreducible singular fibers and one reducible singular fiber, then X is diffeomorphic to $\CP \# 12\CPb$.
\end{prop}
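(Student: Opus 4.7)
The plan is to run the exact same template that proved Proposition \ref{200}, but with the new numerical inputs coming from $(n,s)=(18,1)$. First I would compute the basic characteristic numbers: $e(X)=n+s-4=15$, and by Matsumoto's local signature formula $\sigma(X)=-3n/5-s/5=-11$, hence $c_1^2(X)=2e(X)+3\sigma(X)=-3$. Then from $2-2b_1+2b_2^+=e+\sigma=4$ I would derive $b_2^+=b_1+1$, and from the fact that every Lefschetz fibration over $\mathbb{S}^2$ carries at least one nonseparating vanishing cycle (so $\dim H\geq 1$, giving $b_1(X)\leq 3$), I would conclude that $(b_2^+,b_2^-,b_1)$ lies in the finite list $(1,12,0),(2,13,1),(3,14,2),(4,15,3)$.

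Next I would eliminate the cases with $b_2^+>1$. Since $c_1^2=-3<0$, Taubes' theorem (Theorem 0.2 of \cite{Taubes2}) forces $X$ to be non-minimal whenever $b_2^+>1$. Then I would appeal to Table 1 of \cite{Sato1}, which lists the geography of non-minimal genus 2 Lefschetz fibrations over $\mathbb{S}^2$, and observe that no such fibration with $(n,s)=(18,1)$ and $b_2^+>1$ appears there. This leaves only $(b_2^+,b_2^-,b_1)=(1,12,0)$.

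The rest of the argument would mirror the rationality step in Proposition \ref{200}. Suppose for contradiction that $X$ is neither rational nor ruled, and let $\tilde{X}$ be its minimal model. Since $b_2^+(\tilde X)=1$ and $b_1(\tilde X)=0$, we have $c_1^2(\tilde X)=9-b_2^-(\tilde X)$. Liu's inequality \cite{Liu1} gives $c_1^2(\tilde X)\geq 0$, so $b_2^-(\tilde X)\leq 9$. Applying Theorem 3.1 of \cite{Sato1}, which bounds the number of essential $(-1)$-spheres in such an $X$ by two, I would conclude $b_2^-(X)\leq 11$, contradicting $b_2^-(X)=12$. Hence $X$ is a rational surface, and with $(b_2^+,b_2^-,b_1)=(1,12,0)$ the only possibility is $X\cong \CP\#12\CPb$.

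The main obstacle I expect is the same one present in the previous proposition: verifying that the geography table in \cite{Sato1} really excludes all $b_2^+>1$ candidates for the specific pair $(n,s)=(18,1)$. If any non-minimal entry in that table matches these invariants, an additional blow-down argument (perhaps using Lemmas \ref{n=1} and \ref{n=2}) would be required to reduce to the minimal case before invoking Liu's inequality. Otherwise, the argument is a direct transcription of the $(20,0)$ case with the updated arithmetic.
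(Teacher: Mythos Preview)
Your proposal is correct and follows essentially the same approach as the paper's own proof: identical computation of $e$, $\sigma$, $c_1^2$, the same reduction of $(b_2^+,b_2^-,b_1)$ to four candidates via $b_2^+=b_1+1$ and $b_1\leq 3$, elimination of $b_2^+>1$ via Taubes plus Sato's Table~1, and the same rationality argument through Liu's inequality and Theorem~3.1 of \cite{Sato1} to reach the contradiction $b_2^-(X)\leq 11<12$. The only cosmetic difference is that the paper inserts a one-line remark that $X$ is non-spin because $s=1$ (citing \cite{Stipsicz2}), but this observation is not actually used elsewhere in the argument.
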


\begin{proof}
Let $f:X \to \mathbb{S}^2$ be a genus 2 Lefschetz fibration with $n$ irreducible singular fibers and $s$ reducible singular fibers.
As the fibration we are interested in has $(18,1)$ for $(n,s)$ pair, its Euler characteristic and signature numbers are equal to $e(X)=15$ and $\sigma(X)=-11$ with $c_1^2(X)= 2e(X)+3\sigma(X)=-3$. We note that $X$ is non-spin as there is a reducible fiber (i.e. $s=1$) \cite{Stipsicz2}. Next, we will determine $(b_2^+,b_2^-,b_1)$ for $X$. Since $2-2b_1+2b_2^+=e+\sigma=4$ we obtain $b_2^+=b_1+1$. Let $H$ be the subspace of $H_1(\Sigma_2;\R)$ generated by the vanishing cycles of X. Here, $\Sigma_2$ denotes the reference fiber of genus 2. Since a Lefschetz fibration over $\mathbb{S}^2$ must have a nonseparating vanishing cycle \cite{Stipsicz1}, we have $dim \ H\geq 1$. And since $H_1(X;\R)=H_1(\Sigma_2;\R)/H$, we acquire that $b_1(X)=4-dim \ H\leq 3$. Thus, we have that $1\leq b_2^+=b_1+1\leq 4$, and therefore gives four possible triple for $(b_2^+,b_2^-,b_1)=(1,12,0),(2,13,1),(3,14,2)$ or $ (4,15,3)$. Suppose that $b_2^+> 1$. We will show this is impossible as $K_X^2=c_1^2=3\sigma+2e=-3$. Hence it follows from Theorem 0.2 in \cite{Taubes2} that $X$ is not minimal, that is, $f:X \to \mathbb{S}^2$ is a non-minimal genus 2 Lefschetz fibration with $(n,s)=(18,1)$. However, by the Table 1, of the geography of non-minimal genus 2 Lefschetz fibrations over $\mathbb{S}^2$ \cite{Sato1}, there is not any $b_2^+> 1$ non-minimal genus 2 Lefschetz fibration over $\mathbb{S}^2$ with  $(n,s)=(18,1)$. Therefore, a genus 2 Lefschetz fibration $f:X \to \mathbb{S}^2$ with  $(n,s)=(18,1)$ satisfies $(b_2^+,b_2^-,b_1)=(1,12,0)$.

Next we will show $X$ is a rational surface. Suppose that $X$ is not a rational surface. Let $\tilde{X}$ be the minimal model of $X$. Since  $b_2^+(\tilde{X})=1$ and  $b_1(\tilde{X})=0$, we have that $c_1^2(\tilde{X}) = 3\sigma(\tilde{X}) + 2e(\tilde{X}) = 5b_2^+(\tilde{X})-b_2^-(\tilde{X}) - 4b_1(\tilde{X}) + 4  = 9-b_2^-(\tilde{X})$. Moreover, since $\tilde{X}$ is a minimal symplectic 4-manifold with $b_2^+=1$ and $\tilde{X}$ is not rational nor ruled, it follows from \cite{Liu1} that $\tilde{X}$ satisfies $c_1^2(\tilde{X}) \geq 0 $. Hence, we have $b_2^-(\tilde{X}) \leq 9$. Since $X$ is not rational nor ruled and $X$ admits a genus 2 Lefchetz fibration over $\mathbb{S}^2$, it follows from Theorem 3.1 \cite{Sato1} that $X$ contains at most two 2-spheres with self-intersection number -1 essentially. Therefore, we have that $b_2^-(X) \leq 11$. This is in contradiction with $b_2^-(X)=12$. Thus, $X$ is a rational surface, and $X$ is diffeomorphic to $\CP\#12\CPb$. 
\end{proof}

\section{Construction of Decomposable Exotic 4-manifolds}

In this section, we construct simply-connected, minimal symplectic $4$-manifolds $X(n)$ for $2 \leq n \leq 6$ homeomorphic but not diffeomorphic to $3\CP\# (21-n)\CPb$ by starting from $K3\#2\CPb$ and applying a sequence of six rational blowdowns via lantern relation substitutions. These $X(n)$ are constructed in terms of methodology similar to Akhmedov-Park examples in \cite{AP} and thus shares geometric properties but we will use different monodromy and Hurwitz moves which will help us to show the decomposability of $X(n)$.

\begin{thm}[Construction of $X(n)$ for $2 \leq n \leq 6$]\label{theorem1} 
Let $X(0)$ denote the total space of the genus two Lefschetz fibration on $K3\#2\CPb$ over $S^{2}$ given by the positive relation $(B_0 B_1 \cdot c_1c_2c_3c_4{c_5}^2 c_4 \cdot {}_{c_{3}}(c_2) \cdot c_1 \cdot c_1^2 c_5^2)^2=1$ in $M_2$. There exist irreducible, simply-connected, symplectic $4$-manifolds $X(n)$ for $2 \leq n \leq 6$ which are homeomorphic but not diffeomorphic to $3\CP\# (21-n)\CPb$ for $2 \leq n \leq 6$ that can be obtained by applying six lantern substitutions to the global monodromy relation of $X(0)$. Moreover, $X(n)$ for $2 \leq n \leq 6$ are minimal symplectic $4$-manifolds with the symplectic Kodaira dimension $\kappa^s(X(2)) = 1$ and $\kappa^s(X(n)) = 2$ for $3 \leq n \leq 6$.

\end{thm}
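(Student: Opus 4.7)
The plan is to construct each $X(n)$ as an iterated rational blowdown of $X(0) = K3\#2\CPb$ along $n$ disjoint copies of the configuration $C_2$, realized at the level of monodromy by applying $n$ lantern substitutions to the given positive relator (Theorem~\ref{bd}). First I would use Hurwitz equivalence (braid relations, commutativity of disjoint Dehn twists, and conjugation) to bring the monodromy of $X(0)$ into a form exhibiting six disjoint subwords of the types $c_1^2c_5^2$, $c_1^2c_3^2$, $c_3^2c_5^2$. Each of these six subwords is then replaced by the corresponding three-twist product using the three lantern substitutions listed in Section~2.5, producing the positive relator of $X(n)$ after $n$ substitutions. I would arrange the rewriting so that after the full sequence the word naturally splits, modulo Hurwitz moves, into a Matsumoto-type factor and a residual factor; this specific alignment is what will later drive Theorem~2.

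Next, the topological invariants follow from Lemma~\ref{thm:rb}. Starting from $(e,\sigma,c_1^2,\chi_h)(X(0)) = (26,-18,-2,2)$, each lantern substitution raises $\sigma$ and $c_1^2$ by one, lowers $e$ by one, and preserves $\chi_h$. Simple connectivity of $X(n)$ follows by induction via the standard rational-blowdown $\pi_1$ argument: $X(0)$ is simply connected, the generator of $\pi_1(B_2) \cong \Z/2\Z$ is killed in $X(n)$ because the meridian of the $C_2$-configuration is nullhomotopic in the complement (the Lefschetz fibration carries a section inherited from $X(0)$), and van Kampen closes the argument. Combined with $b_2^+(X(n)) = 3$ and $b_2^-(X(n)) = 21-n$, Freedman's classification identifies the homeomorphism type as $3\CP\#(21-n)\CPb$, since the intersection form is odd (reducible fibers appear after the first lantern substitution, so $X(n)$ is non-spin).

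Each $X(n)$ is symplectic by Gompf--Thurston, so the remaining assertions reduce to Seiberg--Witten computations. For non-diffeomorphism to the rational manifold $3\CP\#(21-n)\CPb$, I would apply Theorem~\ref{SW1} inductively: the basic classes of $K3\#2\CPb$ are transferred through the $n$ rational blowdowns to yield a nontrivial basic class on $X(n)$, whereas rational surfaces have trivial Seiberg--Witten invariants. For symplectic minimality, I would argue that the descended canonical class $K$ satisfies $K\cdot[\omega]>0$ and that the presence of a symplectic $(-1)$-sphere $E$ would force $K\cdot E = -1$, contradicting the Seiberg--Witten description of $K$; alternatively, I would appeal to Usher's theorem on minimality of fiber sums together with the non-vanishing of the SW invariant. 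Finally, the Kodaira dimension follows from Definition~\ref{sym Kod}: $c_1^2(X(n)) = n-2$, so $c_1^2=0$ with $K\cdot[\omega]>0$ gives $\kappa^s(X(2)) = 1$, and $c_1^2>0$ with $K\cdot[\omega]>0$ gives $\kappa^s(X(n)) = 2$ for $3\leq n\leq 6$.

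The principal obstacle will be the explicit Hurwitz-move bookkeeping: one must exhibit six disjoint lantern-substitution subwords in the thirty-letter monodromy of $X(0)$ \emph{while simultaneously} arranging the residual factor to match the monodromy of a Matsumoto-type fibration on $\mathbb{S}^2\times\mathbb{T}^2\#4\CPb$. This is strictly more delicate than the parallel construction in \cite{AP}, since Theorem~2 requires not just the existence of the substitutions but a specific compatible alignment of the resulting factorization. Everything else (the $\pi_1$-calculation, the Freedman step, the SW transfer, and the Kodaira dimension computation) is then essentially forced.
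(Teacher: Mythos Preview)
Your proposal is correct and follows essentially the same route as the paper: explicit Hurwitz moves to exhibit six lantern subwords (the paper organizes this via two preparatory lemmas, one producing the $Z(m)$ factorizations and one producing the $X(n)$ factorizations so that the Matsumoto block $(B_0B_1B_2\delta)^2$ sits cleanly on the left), followed by the Freedman, Seiberg--Witten, and Kodaira-dimension steps exactly as you describe. The only caveat is that your Usher-theorem alternative for minimality would be circular here, since it presupposes the fiber-sum decomposition established only in Theorem~2; the paper instead deduces minimality directly from the fact that $\pm K_{X(n)}$ are the only basic classes and $(K_{X(n)}-(-K_{X(n)}))^2 = 4(n-2) \neq -4$.
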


To prove this theorem, we need to prove the following two lemmas first.

\begin{lem}[Monodromy of $Z(m)$ for $1 \leq m \leq 4$] \label{fourl} The global monodromy of genus 2 Lefschetz fibration on $\CP \# 13\CPb$ over $S^{2}$ given by the relation $Z(0) = (c_1c_2c_3c_4{c_5}^2c_4c_3c_2c_1)^2 = 1$ can be braid substituted to contain four lantern relations.
\end{lem}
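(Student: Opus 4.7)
The proof is computational and proceeds by explicit Hurwitz manipulations on $Z(0) = (c_1c_2c_3c_4c_5^2c_4c_3c_2c_1)^2$, closely following the pattern of the three model calculations displayed at the end of Section 2.5. My plan is to perform four lantern substitutions in sequence, producing the monodromies of $Z(1), Z(2), Z(3), Z(4)$ in turn.

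The starting word has a convenient structure: each of its two factors contains $c_5^2$ in the middle, and the junction between the two factors together with the cyclic closure contributes two further $c_1^2$ sub-words. A small amount of braid manipulation as in the first model substitution of Section 2.5 brings a $c_5^2$ adjacent to a $c_1^2$ in each factor, after which the lantern substitution $c_5^2 c_1^2 \leftrightarrow c_3 \delta x$ can be applied twice, yielding $Z(1)$ and then $Z(2)$. For the remaining two substitutions I would use the new $c_3$'s introduced by the first two lantern moves together with Hurwitz-rearranged copies of the surviving Dehn twists to expose $c_1^2 c_3^2$ and $c_3^2 c_5^2$ sub-patterns, to which the type-2 substitution $c_1^2 c_3^2 \leftrightarrow \bar{k} \bar{h} c_5$ and the type-3 substitution $c_3^2 c_5^2 \leftrightarrow c_1 k h$ can be applied in turn, producing $Z(3)$ and $Z(4)$.

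The main obstacle is the intricate Hurwitz bookkeeping required between successive substitutions, in particular the verification that the desired $c_i^2 c_j^2$ pattern remains exposable in the already-modified word. The key simplification is that the new curves $\delta, x, k, h, \bar{k}, \bar{h}$ introduced by the lantern substitutions bound sphere-with-four-holes sub-surfaces of $\Sigma_2$ whose interiors are disjoint from the support of the surviving $c_i$ twists; the new twists therefore commute with the residual $c_i$'s by (\ref{eq:01}), and the latter can be freely reshuffled via the braid relations (\ref{eq:02}) to expose the next required $c_i^2 c_j^2$ sub-word. Once this freedom is secured, scheduling the four substitutions explicitly — two of type 1, then one of type 2 and one of type 3 — establishes the claim.
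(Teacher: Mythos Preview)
Your plan has two concrete problems, and the claimed ``key simplification'' is precisely where it breaks.

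First, the resource count. Your proposed scheduling begins with \emph{two} substitutions of type~1, each consuming a $c_5^2$ and a $c_1^2$. The word $Z(0)$ contains exactly four $c_1$'s and four $c_5$'s, so after your first two moves none remain. You then propose to expose a $c_1^2 c_3^2$ pattern (for the type-2 move) and a $c_3^2 c_5^2$ pattern (for the type-3 move), but there are no $c_1$'s or $c_5$'s left among the surviving twists $c_2, c_3, c_4, \delta, x$ to do this. Hurwitz moves only replace letters by conjugates of letters already present; they cannot manufacture a genuine $c_1$ or $c_5$ out of $\{c_2,c_3,c_4,\delta,x\}$. The paper avoids this by using a different order --- type~2, then type~1, then type~3, then type~2 --- so that each step both consumes and produces letters needed downstream (for instance the third substitution $c_3^2 c_5^2 \to k h c_1$ supplies a $c_1$ used in the fourth).

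Second, your commutation claim is false. For each of the three lanterns in Section~2.5 the four boundary curves are two parallel copies of $c_i$ and two of $c_j$ with $\{i,j\}\subset\{1,3,5\}$; cutting $\Sigma_2$ along $c_i\cup c_j$ yields \emph{all} of $\Sigma_2\setminus(c_i\cup c_j)$ as the four-holed sphere. The remaining generators $c_2,c_4$ cross the boundary of this subsurface and become arcs inside it, so they generically intersect the interior lantern curves $x,\delta,k,h,\bar k,\bar h$. Hence $t_x, t_\delta$, etc.\ do \emph{not} commute with $t_{c_2}, t_{c_4}$, and the ``free reshuffling'' you invoke is unavailable. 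This is why the paper's computation is littered with conjugated twists such as ${}_{c_3}(c_2)$, ${}_{c_3 c_4}(\delta x)$, ${}_{c_1^{-1}c_2^{-1}}(kh)$: the Hurwitz bookkeeping is genuinely intricate, not a formality that collapses under a disjoint-support argument.
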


\begin{proof}

We start with the identity word: $(c_1c_2c_3c_4{c_5}^2c_4c_3c_2c_1)^2 = 1$ \\

$Z(0) = (c_1c_2c_3c_4{c_5}^2c_4c_3c_2c_1)^2 = 1 $    \\

$\sim  c_1c_2c_3c_4c_5c_5c_4c_3c_2c_1c_1c_2c_3c_4c_5c_5c_4c_3c_2c_1 $    \\

$\sim  c_1c_2c_3c_4c_5c_5c_4 \cdot {}_{c_{3}}(c_2) \cdot c_1^2c_3^2 \cdot {}_{c_{3}^{-1}}(c_2) \cdot c_4c_5c_5c_4c_3c_2c_1 $   \\

$ \overset{L} {\rightarrow} \  c_1c_2c_3c_4c_5c_5c_4 \cdot {}_{c_{3}}(c_2) \cdot c_5 \bar{k} \bar{h} \cdot {}_{c_{3}^{-1}}(c_2) \cdot c_4c_5c_5c_4c_3c_2c_1=\textbf{Z(1)} $   \\

$\sim c_1c_2c_3c_4c_5c_5  c_4 \cdot c_1 \cdot {}_{c_{3}c_{1}^{-1}}(c_2) \cdot c_5 \cdot {}_{c_{1}^{-1}}(\bar{k} \bar{h}) \cdot {}_{c_{3}^{-1}c_{1}^{-1}}(c_2) \cdot c_4c_5c_5c_4c_3 \cdot {}_{c_{1}^{-1}}(c_2) $     \\

$\sim  {}_{c_{1}}(c_2) \cdot c_3c_4 \cdot c_5^2c_1^2 \cdot c_4 \cdot {}_{c_{3}c_{1}^{-1}}(c_2) \cdot c_5 \cdot {}_{c_{1}^{-1}}(\bar{k} \bar{h}) \cdot {}_{c_{3}^{-1}c_{1}^{-1}}(c_2) \cdot c_4c_5c_5c_4c_3 \cdot {}_{c_{1}^{-1}}(c_2) $     \\

$ \overset{L} {\rightarrow} \  {}_{c_{1}}(c_2) \cdot c_3c_4 \cdot \delta x c_3 \cdot c_4 \cdot {}_{c_{3}c_{1}^{-1}}(c_2) \cdot c_5 \cdot {}_{c_{1}^{-1}}(\bar{k} \bar{h}) \cdot {}_{c_{3}^{-1}c_{1}^{-1}}(c_2) \cdot c_4c_5c_5c_4c_3 \cdot {}_{c_{1}^{-1}}(c_2)=\textbf{Z(2)} $      \\

$ \sim  {}_{c_{1}}(c_2) \cdot c_3 c_4 \cdot \delta x \cdot {}_{c_{3}}(c_4) \cdot {}_{c_{3}^2 c_{1}^{-1}}(c_2) \cdot c_5 \cdot {}_{c_{1}^{-1}c_3}(\bar{k} \bar{h}) \cdot c_3 \cdot {}_{c_{3}^{-1}c_{1}^{-1}}(c_2) \cdot c_4c_5c_5c_4c_3 \cdot {}_{c_{1}^{-1}}(c_2) $       \\

$ \sim  {}_{c_{1}}(c_2) \cdot c_3 c_4 \cdot \delta x \cdot {}_{c_{3}}(c_4) \cdot {}_{c_{3}^2 c_{1}^{-1}}(c_2) \cdot c_5 \cdot {}_{c_{1}^{-1}c_3}(\bar{k} \bar{h}) \cdot {}_{c_{1}^{-1}}(c_2) \cdot c_3 \cdot c_4c_5c_5c_4c_3 \cdot {}_{c_{1}^{-1}}(c_2) $       \\

$ \sim  {}_{c_{1}}(c_2) \cdot c_3 c_4 \cdot \delta x \cdot {}_{c_{3}}(c_4) \cdot {}_{c_{3}^2 c_{1}^{-1}}(c_2) \cdot c_5 \cdot {}_{c_{1}^{-1}c_3}(\bar{k} \bar{h}) \cdot {}_{c_{1}^{-1}}(c_2) \cdot {}_{c_{3}}(c_4) \cdot c_3^2c_5^2 \cdot {}_{c_{3}^{-1}}(c_4) \cdot {}_{c_{1}^{-1}}(c_2) $      \\

$  \overset{L} {\rightarrow} \  {}_{c_{1}}(c_2) \cdot c_3 c_4 \cdot \delta x \cdot {}_{c_{3}}(c_4) \cdot {}_{c_{3}^2 c_{1}^{-1}}(c_2) \cdot c_5 \cdot {}_{c_{1}^{-1}c_3}(\bar{k} \bar{h}) \cdot {}_{c_{1}^{-1}}(c_2) \cdot {}_{c_{3}}(c_4) \cdot k h c_1 \cdot {}_{c_{3}^{-1}}(c_4) \cdot {}_{c_{1}^{-1}}(c_2)= \textbf{Z(3)} $    \\

$  \sim  {}_{c_{1}}(c_2) \cdot c_3 c_4 \cdot \delta x \cdot {}_{c_{3}}(c_4) \cdot {}_{c_{3}^2 c_{1}^{-1}}(c_2) \cdot c_5 \cdot {}_{c_{1}^{-1}c_3}(\bar{k} \bar{h}) \cdot {}_{c_{1}^{-1}}(c_2) \cdot {}_{c_{3}}(c_4) \cdot k h \cdot {}_{c_{3}^{-1}}(c_4) \cdot c_2 c_1 $    \\

$  \sim  {}_{c_{1}}(c_2) \cdot c_3 c_4 \cdot \delta x \cdot {}_{c_{3}}(c_4) \cdot {}_{c_{3}^2 c_{1}^{-1}}(c_2) \cdot c_5 \cdot {}_{c_{1}^{-1}c_3}(\bar{k} \bar{h}) \cdot {}_{c_{1}^{-1}}(c_2) \cdot c_2 \cdot {}_{c_{2}^{-1}c_{3}}(c_4) \cdot {}_{c_{2}^{-1}}(k h) \cdot {}_{c_{2}^{-1}c_{3}^{-1}}(c_4) \cdot c_1 $    \\

$  \overset{B} {\rightarrow} \  {}_{c_{1}}(c_2) \cdot c_3 c_4 \cdot \delta x \cdot {}_{c_{3}}(c_4) \cdot {}_{c_{3}^2 c_{1}^{-1}}(c_2) \cdot c_5 \cdot {}_{c_{1}^{-1}c_3}(\bar{k} \bar{h}) \cdot c_2 c_1 \cdot {}_{c_{2}^{-1}c_{3}}(c_4) \cdot {}_{c_{2}^{-1}}(k h) \cdot {}_{c_{2}^{-1}c_{3}^{-1}}(c_4) \cdot c_1 $    \\

$  \sim  {}_{c_{1}}(c_2) \cdot c_3 \cdot {}_{c_{4}}(\delta x) \cdot c_4 \cdot {}_{c_{3}}(c_4) \cdot {}_{c_{3}^2 c_{1}^{-1}}(c_2) \cdot c_5 \cdot {}_{c_{1}^{-1}c_3}(\bar{k} \bar{h}) \cdot c_2 c_1 \cdot {}_{c_{2}^{-1}c_{3}}(c_4) \cdot {}_{c_{2}^{-1}}(k h) \cdot {}_{c_{2}^{-1}c_{3}^{-1}}(c_4) \cdot c_1 $    \\

$  \overset{B} {\rightarrow} \  {}_{c_{1}}(c_2) \cdot c_3 \cdot {}_{c_{4}}(\delta x) \cdot c_3 c_4 \cdot {}_{c_{3}^2 c_{1}^{-1}}(c_2) \cdot c_5 \cdot {}_{c_{1}^{-1}c_3}(\bar{k} \bar{h}) \cdot c_2 c_1 \cdot {}_{c_{2}^{-1}c_{3}}(c_4) \cdot {}_{c_{2}^{-1}}(k h) \cdot {}_{c_{2}^{-1}c_{3}^{-1}}(c_4) \cdot c_1 $    \\

$  \sim  {}_{c_{1}}(c_2) \cdot c_3 \cdot {}_{c_{4}}(\delta x) \cdot c_3 c_4 \cdot {}_{c_{3}^2 c_{1}^{-1}}(c_2) \cdot c_5 \cdot {}_{c_{1}^{-1}c_3}(\bar{k} \bar{h}) \cdot c_2 \cdot c_1^2 \cdot {}_{c_{1}^{-1}c_{2}^{-1}c_{3}}(c_4) \cdot {}_{c_{1}^{-1}c_{2}^{-1}}(k h) \cdot {}_{c_{1}^{-1}c_{2}^{-1}c_{3}^{-1}}(c_4) $    \\

$  \sim  {}_{c_{1}}(c_2)  \cdot {}_{c_3 c_{4}}(\delta x) \cdot c_3^2 \cdot c_4 \cdot {}_{c_{3}^2 c_{1}^{-1}}(c_2) \cdot c_5 \cdot {}_{c_{1}^{-1}c_3}(\bar{k} \bar{h}) \cdot c_2 \cdot c_1^2 \cdot {}_{c_{1}^{-1}c_{2}^{-1}c_{3}}(c_4) \cdot {}_{c_{1}^{-1}c_{2}^{-1}}(k h) \cdot {}_{c_{1}^{-1}c_{2}^{-1}c_{3}^{-1}}(c_4) $    \\

$  \sim   {}_{c_{1}}(c_2)  \cdot {}_{c_3 c_{4}}(\delta x) \cdot {}_{c_3^2}(c_4) \cdot {}_{c_{3}^4 c_{1}^{-1}}(c_2) \cdot c_3^2 \cdot c_5 \cdot {}_{c_{1}^{-1}c_3}(\bar{k} \bar{h}) \cdot c_2 \cdot c_1^2 \cdot {}_{c_{1}^{-1}c_{2}^{-1}c_{3}}(c_4) \cdot {}_{c_{1}^{-1}c_{2}^{-1}}(k h) \cdot {}_{c_{1}^{-1}c_{2}^{-1}c_{3}^{-1}}(c_4) $    \\

$  \sim   {}_{c_{1}}(c_2)  \cdot {}_{c_3 c_{4}}(\delta x) \cdot {}_{c_3^2}(c_4) \cdot {}_{c_{3}^4 c_{1}^{-1}}(c_2) \cdot c_5 \cdot {}_{c_3^2 c_{1}^{-1} c_3}(\bar{k} \bar{h}) \cdot {}_{c_{3}^2}(c_2) \cdot c_1^2 c_3^2 \cdot {}_{c_{1}^{-1}c_{2}^{-1}c_{3}}(c_4) \cdot {}_{c_{1}^{-1}c_{2}^{-1}}(k h) \cdot {}_{c_{1}^{-1}c_{2}^{-1}c_{3}^{-1}}(c_4) $    \\

$  \overset{L} {\rightarrow} \ {}_{c_{1}}(c_2)  \cdot {}_{c_3 c_{4}}(\delta x) \cdot {}_{c_3^2}(c_4) \cdot {}_{c_{3}^4 c_{1}^{-1}}(c_2) \cdot c_5 \cdot {}_{c_3^2 c_{1}^{-1} c_3}(\bar{k} \bar{h}) \cdot {}_{c_{3}^2}(c_2) \cdot c_5 \bar{k} \bar{h} \cdot {}_{c_{1}^{-1}c_{2}^{-1}c_{3}}(c_4) \cdot {}_{c_{1}^{-1}c_{2}^{-1}}(k h) \cdot {}_{c_{1}^{-1}c_{2}^{-1}c_{3}^{-1}}(c_4) = \textbf{Z(4)}$    \\

\end{proof}

We collect positive relations of $Z(m)$ for $0 \leq m \leq 4$ below,  \\

\begin{itemize}

  \item $(c_1c_2c_3c_4{c_5}^2c_4c_3c_2c_1)^2 = \textbf{Z(0)} $    \\

  \item $c_1c_2c_3c_4c_5c_5c_4 \cdot {}_{c_{3}}(c_2) \cdot c_5 \bar{k} \bar{h} \cdot {}_{c_{3}^{-1}}(c_2) \cdot c_4c_5c_5c_4c_3c_2c_1=\textbf{Z(1)} $    \\
  
  \item ${}_{c_{1}}(c_2) \cdot c_3c_4 \cdot \delta x c_3 \cdot c_4 \cdot {}_{c_{3}c_{1}^{-1}}(c_2) \cdot c_5 \cdot {}_{c_{1}^{-1}}(\bar{k} \bar{h}) \cdot {}_{c_{3}^{-1}c_{1}^{-1}}(c_2) \cdot c_4c_5c_5c_4c_3 \cdot {}_{c_{1}^{-1}}(c_2)=\textbf{Z(2)} $    \\
  
  \item ${}_{c_{1}}(c_2) \cdot c_3 c_4 \cdot \delta x \cdot {}_{c_{3}}(c_4) \cdot {}_{c_{3}^2 c_{1}^{-1}}(c_2) \cdot c_5 \cdot {}_{c_{1}^{-1}c_3}(\bar{k} \bar{h}) \cdot {}_{c_{1}^{-1}}(c_2) \cdot {}_{c_{3}}(c_4) \cdot k h c_1 \cdot {}_{c_{3}^{-1}}(c_4) \cdot {}_{c_{1}^{-1}}(c_2)= \textbf{Z(3)} $    \\
  
  \item $  {}_{c_{1}}(c_2)  \cdot {}_{c_3 c_{4}}(\delta x) \cdot {}_{c_3^2}(c_4) \cdot {}_{c_{3}^4 c_{1}^{-1}}(c_2) \cdot c_5 \cdot {}_{c_3^2 c_{1}^{-1} c_3}(\bar{k} \bar{h}) \cdot {}_{c_{3}^2}(c_2) \cdot c_5 \bar{k} \bar{h} \cdot {}_{c_{1}^{-1}c_{2}^{-1}c_{3}}(c_4) \cdot {}_{c_{1}^{-1}c_{2}^{-1}}(k h) \cdot {}_{c_{1}^{-1}c_{2}^{-1}c_{3}^{-1}}(c_4) = \textbf{Z(4)}$    \\
  
\end{itemize}

\par We see that $Z(m)$ for $1 \leq m \leq 4$ are rational blowdown copies of $\CP \# 13\CPb$ by the Theorem 3.1 in~\cite{EG} constructed in the same methodology as in the Endo-Gurtas and thus homeomorphic to $\CP\# (13-m)\CPb$ for $0 \leq m \leq 4$ (cf. \cite{EG}) except that we have avoided in using the conjugation move $ \overset{C} {\rightarrow}$ which will facilitate fiber sum splitting of $X(n)$ for $2 \leq n \leq 6$ constructed below. It is also worth noting that $Z(1)$ is diffeomorphic to $\CP \# 12\CPb$ by Proposition~\ref{181}. After this we can characterize the $Z(2), Z(3), Z(4)$ only up to homeomorphism types of $\CP \# (12-m)\CPb$ for $2 \leq m \leq 4$.

\par Combining the above monodromy computations for $X(0)$ and $Z(m)$. We can now find six lantern relations on $K3\#2\CPb$ which allows the fiber sum decomposability.

\begin{lem}[Monodromy of $X(n)$ for $2 \leq n \leq 6$] \label{sixl}
The global monodromy of genus 2 Lefschetz fibration on $K3\#2\CPb$ over $S^{2}$ given by the relation $X(0)$ in Proposition~\ref{twol} can be conjugated and braid substituted to contain six lantern relations.
\end{lem}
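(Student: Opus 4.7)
The plan is to exhibit the six lantern substitutions sequentially, in two stages of two and four respectively. Two of them come for free from the very form of the monodromy. Indeed, the positive factorization
\[
X(0) \;=\; \bigl(B_0 B_1 \cdot c_1 c_2 c_3 c_4 c_5^{2} c_4 \cdot {}_{c_3}(c_2) \cdot c_1 \cdot c_1^{2} c_5^{2}\bigr)^{2}
\]
given in Proposition~\ref{twol} contains two subwords of the shape $c_1^{2} c_5^{2}$. Because $c_1$ and $c_5$ commute (relation \eqref{eq:01} with $|1-5|\geq 2$), each subword equals $c_5 c_1^{2} c_5$, which is the left-hand side of the first lantern substitution
\[
c_5 c_1^{2} c_5 \;\longleftrightarrow\; c_3 \delta x
\]
recorded in Section~\ref{lantern}. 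These are precisely the two lantern moves that were used, in reverse, in the proof of Proposition~\ref{twol} to build $X(0)$ from the untwisted fiber sum, and so applying them here produces two rational blowdowns of $X(0)$.

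After performing these two substitutions, the monodromy becomes, up to commutation,
\[
B_0 B_1 \cdot c_1 c_2 c_3 c_4 c_5^{2} c_4 \cdot {}_{c_3}(c_2) \cdot c_1 \cdot x c_3 \delta \cdot B_0 B_1 \cdot c_1 c_2 c_3 c_4 c_5^{2} c_4 \cdot {}_{c_3}(c_2) \cdot c_1 \cdot x c_3 \delta.
\]
Using the identity $B_2 = {}_{c_3^{-1}}(x)$ and the centrality of the chain twist $c_1 c_2 c_3 c_4 c_5^{2} c_4 c_3 c_2 c_1$ (relation \eqref{eq:05}), one can reverse the rearrangement carried out in the proof of Proposition~\ref{twol} to bring the word into the untwisted fiber-sum form
\[
(B_0 B_1 B_2 \delta)^{2} \cdot (c_1 c_2 c_3 c_4 c_5^{2} c_4 c_3 c_2 c_1)^{2} \;=\; (B_0 B_1 B_2 \delta)^{2} \cdot Z(0),
\]
in which the Matsumoto factor and the genus~$2$ rational factor $Z(0)$ have been completely decoupled.

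The remaining four lantern substitutions are then supplied directly by Lemma~\ref{fourl}: the monodromy factorization of $Z(0)$ admits, after commutations and braid substitutions, four successive lantern substitutions producing $Z(1), Z(2), Z(3), Z(4)$. Since these moves act entirely within the $Z(0)$-portion of the fiber-sum word above, they can be transplanted into the monodromy of $X(0)$ without disturbing the Matsumoto factor $(B_0 B_1 B_2 \delta)^{2}$. Combined with the first two lantern substitutions, this yields the required six. The main obstacle is purely combinatorial bookkeeping: one must carry out the rearrangement from $X(0)$'s given factorization to the clean fiber-sum form precisely enough that the four lantern subwords isolated in the proof of Lemma~\ref{fourl} sit inside the $Z(0)$ factor verbatim. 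This amounts to running the monodromy calculation of Proposition~\ref{twol} backward, then copying the computation of Lemma~\ref{fourl}, and the delicacy is only in keeping track of the conjugating elements and the order of commutations.
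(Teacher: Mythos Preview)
Your approach is correct and matches the paper's proof essentially step for step: the paper also performs the two lantern substitutions $c_1^{2}c_5^{2}\to x c_3\delta$ to pass from $X(0)$ to $X(2)$, then reverses the rearrangement of Proposition~\ref{twol} to reach the fiber-sum form $(B_0B_1B_2\delta)^2\cdot Z(0)$, and finally transplants the four lantern substitutions of Lemma~\ref{fourl} verbatim into the $Z(0)$ factor to obtain $X(3),\dots,X(6)$. The only difference is that the paper writes out the full chain of elementary moves explicitly, whereas you give the structural outline; filling in that bookkeeping is exactly what you flagged as the remaining work.
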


\begin{proof}

We begin with $\textbf{X(0)} = (B_0 B_1 \cdot c_1c_2c_3c_4{c_5}^2 c_4 \cdot {}_{c_{3}}(c_2) \cdot c_1 \cdot c_1^2 c_5^2)^2 = 1 $ \\

$ \sim B_0 B_1 \cdot c_1c_2c_3c_4{c_5}^2 c_4 \cdot {}_{c_{3}}(c_2) \cdot c_1 \cdot c_1^2 c_5^2 \cdot B_0 B_1 \cdot c_1c_2c_3c_4{c_5}^2 c_4 \cdot {}_{c_{3}}(c_2) \cdot c_1 \cdot c_1^2 c_5^2 $ \\

$ \overset{L} {\rightarrow} \ B_0 B_1 \cdot c_1c_2c_3c_4{c_5}^2 c_4 \cdot {}_{c_{3}}(c_2) \cdot c_1 \cdot c_1^2 c_5^2 \cdot B_0 B_1 \cdot c_1c_2c_3c_4{c_5}^2 c_4 \cdot {}_{c_{3}}(c_2) \cdot c_1 \cdot x c_3 \delta  = \textbf{X(1)}$ \\

$ \overset{L} {\rightarrow} \ B_0 B_1 \cdot c_1c_2c_3c_4{c_5}^2 c_4 \cdot {}_{c_{3}}(c_2) \cdot c_1 \cdot x c_3 \delta \cdot B_0 B_1 \cdot c_1c_2c_3c_4{c_5}^2 c_4 \cdot {}_{c_{3}}(c_2) \cdot c_1 \cdot x c_3 \delta = \textbf{X(2)}$ \\

$ \sim B_0 B_1 \cdot c_1c_2c_3c_4{c_5}^2 c_4 \cdot {}_{c_{3}}(c_2) \cdot c_1 \cdot c_3 \cdot {}_{c_{3}^{-1}}(x) \cdot \delta B_0 B_1 \cdot c_1c_2c_3c_4{c_5}^2 c_4 \cdot {}_{c_{3}}(c_2) \cdot c_1 \cdot c_3 \cdot {}_{c_{3}^{-1}}(x) \cdot \delta  $ \\

$ \sim B_0 B_1 \cdot (c_1c_2c_3c_4{c_5}^2c_4c_3c_2c_1) \cdot {}_{c_{3}^{-1}}(x) \cdot \delta B_0 B_1 \cdot (c_1c_2c_3c_4{c_5}^2c_4c_3c_2c_1) \cdot {}_{c_{3}^{-1}}(x) \cdot \delta $\\ 

$ \sim B_0 B_1 \cdot {}_{c_{3}^{-1}}(x) \cdot \delta B_0 B_1 \cdot {}_{c_{3}^{-1}}(x) \cdot \delta \cdot c_1c_2c_3c_4{c_5}^2c_4c_3c_2c_1 \cdot c_1c_2c_3c_4{c_5}^2c_4c_3c_2c_1 \\ $
\{$(c_1c_2c_3c_4{c_5}^2c_4c_3c_2c_1)$ is central\}\\

$ \sim B_0 B_1 B_2 \delta B_0 B_1 B_2 \delta \cdot c_1c_2c_3c_4{c_5}^2c_4c_3c_2c_1 \cdot c_1c_2c_3c_4{c_5}^2c_4c_3c_2c_1 $\\ 
\{$B_2 := {}_{c_{3}^{-1}}(x)\}$\\

$ \sim (B_0 B_1 B_2 \delta)^2 \cdot (c_1c_2c_3c_4{c_5}^2c_4c_3c_2c_1)^2 = \textbf{X(2)}$  \\


$ \sim (B_0 B_1 B_2 \delta)^2 \cdot c_1c_2c_3c_4c_5c_5c_4c_3c_2c_1c_1c_2c_3c_4c_5c_5c_4c_3c_2c_1 $\\

$ \sim (B_0 B_1 B_2 \delta)^2 \cdot c_1c_2c_3c_4c_5c_5c_4 \cdot {}_{c_{3}}(c_2) \cdot c_1^2c_3^2 \cdot {}_{c_{3}^{-1}}(c_2) \cdot c_4c_5c_5c_4c_3c_2c_1 $\\

$ \overset{L} {\rightarrow} \  (B_0 B_1 B_2 \delta)^2 \cdot c_1c_2c_3c_4c_5c_5c_4 \cdot {}_{c_{3}}(c_2) \cdot c_5 \bar{k} \bar{h} \cdot {}_{c_{3}^{-1}}(c_2) \cdot c_4c_5c_5c_4c_3c_2c_1 = \textbf{X(3)}$\\

$\sim  (B_0 B_1 B_2 \delta)^2 \cdot c_1c_2c_3c_4c_5c_5  c_4 \cdot c_1 \cdot {}_{c_{3}c_{1}^{-1}}(c_2) \cdot c_5 \cdot {}_{c_{1}^{-1}}(\bar{k} \bar{h}) \cdot {}_{c_{3}^{-1}c_{1}^{-1}}(c_2) \cdot c_4c_5c_5c_4c_3 \cdot {}_{c_{1}^{-1}}(c_2) $   \\

$\sim  (B_0 B_1 B_2 \delta)^2 \cdot {}_{c_{1}}(c_2) \cdot c_3c_4 \cdot c_5^2c_1^2 \cdot c_4 \cdot {}_{c_{3}c_{1}^{-1}}(c_2) \cdot c_5 \cdot {}_{c_{1}^{-1}}(\bar{k} \bar{h}) \cdot {}_{c_{3}^{-1}c_{1}^{-1}}(c_2) \cdot c_4c_5c_5c_4c_3 \cdot {}_{c_{1}^{-1}}(c_2) $   \\

$ \overset{L} {\rightarrow} \  (B_0 B_1 B_2 \delta)^2 \cdot {}_{c_{1}}(c_2) \cdot c_3c_4 \cdot \delta x c_3 \cdot c_4 \cdot {}_{c_{3}c_{1}^{-1}}(c_2) \cdot c_5 \cdot {}_{c_{1}^{-1}}(\bar{k} \bar{h}) \cdot {}_{c_{3}^{-1}c_{1}^{-1}}(c_2) \cdot c_4c_5c_5c_4c_3 \cdot {}_{c_{1}^{-1}}(c_2) = \textbf{X(4)}$   \\

$ \sim  (B_0 B_1 B_2 \delta)^2 \cdot {}_{c_{1}}(c_2) \cdot c_3 c_4 \cdot \delta x \cdot {}_{c_{3}}(c_4) \cdot {}_{c_{3}^2 c_{1}^{-1}}(c_2) \cdot c_5 \cdot {}_{c_{1}^{-1}c_3}(\bar{k} \bar{h}) \cdot c_3 \cdot {}_{c_{3}^{-1}c_{1}^{-1}}(c_2) \cdot c_4c_5c_5c_4c_3 \cdot {}_{c_{1}^{-1}}(c_2) $    \\

$ \sim  (B_0 B_1 B_2 \delta)^2 \cdot {}_{c_{1}}(c_2) \cdot c_3 c_4 \cdot \delta x \cdot {}_{c_{3}}(c_4) \cdot {}_{c_{3}^2 c_{1}^{-1}}(c_2) \cdot c_5 \cdot {}_{c_{1}^{-1}c_3}(\bar{k} \bar{h}) \cdot {}_{c_{1}^{-1}}(c_2) \cdot c_3 \cdot c_4c_5c_5c_4c_3 \cdot {}_{c_{1}^{-1}}(c_2) $    \\

$ \sim  (B_0 B_1 B_2 \delta)^2 \cdot {}_{c_{1}}(c_2) \cdot c_3 c_4 \cdot \delta x \cdot {}_{c_{3}}(c_4) \cdot {}_{c_{3}^2 c_{1}^{-1}}(c_2) \cdot c_5 \cdot {}_{c_{1}^{-1}c_3}(\bar{k} \bar{h}) \cdot {}_{c_{1}^{-1}}(c_2) \cdot {}_{c_{3}}(c_4) \cdot c_3^2c_5^2 \cdot {}_{c_{3}^{-1}}(c_4) \cdot {}_{c_{1}^{-1}}(c_2) $    \\

$  \overset{L} {\rightarrow} \  (B_0 B_1 B_2 \delta)^2 \cdot {}_{c_{1}}(c_2) \cdot c_3 c_4 \cdot \delta x \cdot {}_{c_{3}}(c_4) \cdot {}_{c_{3}^2 c_{1}^{-1}}(c_2) \cdot c_5 \cdot {}_{c_{1}^{-1}c_3}(\bar{k} \bar{h}) \cdot {}_{c_{1}^{-1}}(c_2) \cdot {}_{c_{3}}(c_4) \cdot k h c_1 \cdot {}_{c_{3}^{-1}}(c_4) \cdot {}_{c_{1}^{-1}}(c_2) = \textbf{X(5)}$    \\

$  \sim  (B_0 B_1 B_2 \delta)^2 \cdot {}_{c_{1}}(c_2) \cdot c_3 c_4 \cdot \delta x \cdot {}_{c_{3}}(c_4) \cdot {}_{c_{3}^2 c_{1}^{-1}}(c_2) \cdot c_5 \cdot {}_{c_{1}^{-1}c_3}(\bar{k} \bar{h}) \cdot {}_{c_{1}^{-1}}(c_2) \cdot {}_{c_{3}}(c_4) \cdot k h \cdot {}_{c_{3}^{-1}}(c_4) \cdot c_2 c_1 $    \\

$  \sim  (B_0 B_1 B_2 \delta)^2 \cdot {}_{c_{1}}(c_2) \cdot c_3 c_4 \cdot \delta x \cdot {}_{c_{3}}(c_4) \cdot {}_{c_{3}^2 c_{1}^{-1}}(c_2) \cdot c_5 \cdot {}_{c_{1}^{-1}c_3}(\bar{k} \bar{h}) \cdot {}_{c_{1}^{-1}}(c_2) \cdot c_2 \cdot {}_{c_{2}^{-1}c_{3}}(c_4) \cdot {}_{c_{2}^{-1}}(k h) \cdot {}_{c_{2}^{-1}c_{3}^{-1}}(c_4) \cdot c_1 $    \\

$  \overset{B} {\rightarrow} \  (B_0 B_1 B_2 \delta)^2 \cdot {}_{c_{1}}(c_2) \cdot c_3 c_4 \cdot \delta x \cdot {}_{c_{3}}(c_4) \cdot {}_{c_{3}^2 c_{1}^{-1}}(c_2) \cdot c_5 \cdot {}_{c_{1}^{-1}c_3}(\bar{k} \bar{h}) \cdot c_2 c_1 \cdot {}_{c_{2}^{-1}c_{3}}(c_4) \cdot {}_{c_{2}^{-1}}(k h) \cdot {}_{c_{2}^{-1}c_{3}^{-1}}(c_4) \cdot c_1 $    \\

$  \sim  (B_0 B_1 B_2 \delta)^2 \cdot {}_{c_{1}}(c_2) \cdot c_3 \cdot {}_{c_{4}}(\delta x) \cdot c_4 \cdot {}_{c_{3}}(c_4) \cdot {}_{c_{3}^2 c_{1}^{-1}}(c_2) \cdot c_5 \cdot {}_{c_{1}^{-1}c_3}(\bar{k} \bar{h}) \cdot c_2 c_1 \cdot {}_{c_{2}^{-1}c_{3}}(c_4) \cdot {}_{c_{2}^{-1}}(k h) \cdot {}_{c_{2}^{-1}c_{3}^{-1}}(c_4) \cdot c_1 $    \\

$  \overset{B} {\rightarrow} \  (B_0 B_1 B_2 \delta)^2 \cdot {}_{c_{1}}(c_2) \cdot c_3 \cdot {}_{c_{4}}(\delta x) \cdot c_3 c_4 \cdot {}_{c_{3}^2 c_{1}^{-1}}(c_2) \cdot c_5 \cdot {}_{c_{1}^{-1}c_3}(\bar{k} \bar{h}) \cdot c_2 c_1 \cdot {}_{c_{2}^{-1}c_{3}}(c_4) \cdot {}_{c_{2}^{-1}}(k h) \cdot {}_{c_{2}^{-1}c_{3}^{-1}}(c_4) \cdot c_1 $    \\

$  \sim  (B_0 B_1 B_2 \delta)^2 \cdot {}_{c_{1}}(c_2) \cdot c_3 \cdot {}_{c_{4}}(\delta x) \cdot c_3 c_4 \cdot {}_{c_{3}^2 c_{1}^{-1}}(c_2) \cdot c_5 \cdot {}_{c_{1}^{-1}c_3}(\bar{k} \bar{h}) \cdot c_2 \cdot c_1^2 \cdot {}_{c_{1}^{-1}c_{2}^{-1}c_{3}}(c_4) \cdot {}_{c_{1}^{-1}c_{2}^{-1}}(k h) \cdot {}_{c_{1}^{-1}c_{2}^{-1}c_{3}^{-1}}(c_4) $    \\

$  \sim  (B_0 B_1 B_2 \delta)^2 \cdot {}_{c_{1}}(c_2)  \cdot {}_{c_3 c_{4}}(\delta x) \cdot c_3^2 \cdot c_4 \cdot {}_{c_{3}^2 c_{1}^{-1}}(c_2) \cdot c_5 \cdot {}_{c_{1}^{-1}c_3}(\bar{k} \bar{h}) \cdot c_2 \cdot c_1^2 \cdot {}_{c_{1}^{-1}c_{2}^{-1}c_{3}}(c_4) \cdot {}_{c_{1}^{-1}c_{2}^{-1}}(k h) \cdot {}_{c_{1}^{-1}c_{2}^{-1}c_{3}^{-1}}(c_4) $    \\

$  \sim  (B_0 B_1 B_2 \delta)^2 \cdot {}_{c_{1}}(c_2)  \cdot {}_{c_3 c_{4}}(\delta x) \cdot {}_{c_3^2}(c_4) \cdot {}_{c_{3}^4 c_{1}^{-1}}(c_2) \cdot c_3^2 \cdot c_5 \cdot {}_{c_{1}^{-1}c_3}(\bar{k} \bar{h}) \cdot c_2 \cdot c_1^2 \cdot {}_{c_{1}^{-1}c_{2}^{-1}c_{3}}(c_4) \cdot {}_{c_{1}^{-1}c_{2}^{-1}}(k h) \cdot {}_{c_{1}^{-1}c_{2}^{-1}c_{3}^{-1}}(c_4) $    \\

$  \sim  (B_0 B_1 B_2 \delta)^2 \cdot {}_{c_{1}}(c_2)  \cdot {}_{c_3 c_{4}}(\delta x) \cdot {}_{c_3^2}(c_4) \cdot {}_{c_{3}^4 c_{1}^{-1}}(c_2) \cdot c_5 \cdot {}_{c_3^2 c_{1}^{-1} c_3}(\bar{k} \bar{h}) \cdot {}_{c_{3}^2}(c_2) \cdot c_1^2 c_3^2 \cdot {}_{c_{1}^{-1}c_{2}^{-1}c_{3}}(c_4) \cdot {}_{c_{1}^{-1}c_{2}^{-1}}(k h) \cdot {}_{c_{1}^{-1}c_{2}^{-1}c_{3}^{-1}}(c_4) $    \\

$  \overset{L} {\rightarrow} \  (B_0 B_1 B_2 \delta)^2 \cdot {}_{c_{1}}(c_2)  \cdot {}_{c_3 c_{4}}(\delta x) \cdot {}_{c_3^2}(c_4) \cdot {}_{c_{3}^4 c_{1}^{-1}}(c_2) \cdot c_5 \cdot {}_{c_3^2 c_{1}^{-1} c_3}(\bar{k} \bar{h}) \cdot {}_{c_{3}^2}(c_2) \cdot c_5 \bar{k} \bar{h} \cdot {}_{c_{1}^{-1}c_{2}^{-1}c_{3}}(c_4) \cdot {}_{c_{1}^{-1}c_{2}^{-1}}(k h) \cdot {}_{c_{1}^{-1}c_{2}^{-1}c_{3}^{-1}}(c_4) = \textbf{X(6)}$    \\

\end{proof}

We collect positive relations of $X(n)$ for $2 \leq n \leq 6$ below,  \\

\begin{itemize}
  \item $(B_0 B_1 B_2 \delta)^2 \cdot (c_1c_2c_3c_4{c_5}^2c_4c_3c_2c_1)^2 = \textbf{X(2)}$    \\
  \item $(B_0 B_1 B_2 \delta)^2 \cdot c_1c_2c_3c_4c_5c_5c_4 \cdot {}_{c_{3}}(c_2) \cdot c_5 \bar{k} \bar{h} \cdot {}_{c_{3}^{-1}}(c_2) \cdot c_4c_5c_5c_4c_3c_2c_1 = \textbf{X(3)}$    \\
  \item $(B_0 B_1 B_2 \delta)^2 \cdot {}_{c_{1}}(c_2) \cdot c_3c_4 \cdot \delta x c_3 \cdot c_4 \cdot {}_{c_{3}c_{1}^{-1}}(c_2) \cdot c_5 \cdot {}_{c_{1}^{-1}}(\bar{k} \bar{h}) \cdot {}_{c_{3}^{-1}c_{1}^{-1}}(c_2) \cdot c_4c_5c_5c_4c_3 \cdot {}_{c_{1}^{-1}}(c_2) = \textbf{X(4)}$    \\
  \item $(B_0 B_1 B_2 \delta)^2 \cdot {}_{c_{1}}(c_2) \cdot c_3 c_4 \cdot \delta x \cdot {}_{c_{3}}(c_4) \cdot {}_{c_{3}^2 c_{1}^{-1}}(c_2) \cdot c_5 \cdot {}_{c_{1}^{-1}c_3}(\bar{k} \bar{h}) \cdot {}_{c_{1}^{-1}}(c_2) \cdot {}_{c_{3}}(c_4) \cdot k h c_1 \cdot {}_{c_{3}^{-1}}(c_4) \cdot {}_{c_{1}^{-1}}(c_2) = \textbf{X(5)}$    \\
  \item $(B_0 B_1 B_2 \delta)^2 \cdot {}_{c_{1}}(c_2)  \cdot {}_{c_3 c_{4}}(\delta x) \cdot {}_{c_3^2}(c_4) \cdot {}_{c_{3}^4 c_{1}^{-1}}(c_2) \cdot c_5 \cdot {}_{c_3^2 c_{1}^{-1} c_3}(\bar{k} \bar{h}) \cdot {}_{c_{3}^2}(c_2) \cdot c_5 \bar{k} \bar{h} \cdot {}_{c_{1}^{-1}c_{2}^{-1}c_{3}}(c_4) \cdot {}_{c_{1}^{-1}c_{2}^{-1}}(k h) \cdot {}_{c_{1}^{-1}c_{2}^{-1}c_{3}^{-1}}(c_4) = \textbf{X(6)}$   \\
\end{itemize}

We now give a proof of the main theorem of this section Theorem~\ref{theorem1}. 

\begin{proof} 
Let $X(n)$ for $2 \leq n \leq 6$ be the symplectic $4$-manifold obtained from $K3\#2\CPb$ by applying a sequence of six lantern relation substitutions as in Lemma~\ref{sixl} above. 

We first compute the topological invariants to determine the homeomorphism types of $X(n)$ for $2 \leq n \leq 6$.
 
 \begin{eqnarray*}
e(X(n)) &=& \# {\emph{singular fibers}} + e(\mathbb{S}^{2}) e(\Sigma_{2}) =  (30-n) + 2(-2) =  26 - n ,\\
\sigma(X(n)) &=& -\frac{3}{5}n - \frac{1}{5}s =  -\frac{3}{5}(30 - 2n) - \frac{1}{5}(n) = -18 + n, \\
{c_1}^{2}(X(n)) &:=& 3\sigma(X(n)) + 2e(X(n)) = n - 2, \\
\chi(X(n)) &:=& (e(X(n)) + \sigma(X(n)))/4 = 2 \\
\end{eqnarray*}

\par $X(n)$ for $2 \leq n \leq 6$ are simply-connected as $X(0) = K3\#2\,\CPb$ and $X(n-1) \setminus C_2$ are simply-connected. They have the Euler characteristic $e(X(n)) = e(K3\#2\CPb) - n = (26) - n$ with the signature $\sigma(X(n)) = \sigma (K3\#2\CPb) + n = (-18) + n$. Note that they all are non-spin as there are reducible fibers \cite{Stipsicz2}. All together, $X(n)$ for $2 \leq n \leq 6$ are homeomorphic to $3\CP\# (21-n)\CPb$ from Freedman's classification theorem (cf.\ \cite{Freedman}).

Using the blow up formula for the Seiberg-Witten function \cite{FS2}, we have $SW_{K3\#2\,\CPb}$ 
$= SW_{K3} \cdot \prod_{j=1}^{2}(e^{E_{i}} + e^{-E_{i}}) = (e^{E_{1}} + e^{-E_{1}})(e^{E_{2}} + e^{-E_{2}})$, where $E_{i}$ is an exceptional class coming from the $i^{th}$ blow up. Thus the set of basic classes of $K3\#2\,\CPb$ are given by $\pm E_{1} \pm E_{2}$, and the Seiberg-Witten invariants on these classes are $\pm 1$. After performing one rational blowdown along a copy of the configuration $C_2$, the resulting manifold is diffeomorphic to $K3\#\,\CPb$ by Lemma~\ref{n=2}. Thus, the only basic classes are $\pm E$, where $E \in H^2(K3\#\CPb)$ is the poincar\'e dual of the homology class of the exceptional sphere, which decends from the top classes ${\pm (E_{1} + E_{2})}$ in $K3\#2\,\CPb$. Next, using the Corollary 8.6 in~\cite{FS1}, we see that $X$ has Seiberg-Witten simple type. By applying Theorem~\ref{SW1} and Theorem~\ref{SW2}, we completely determine the Seiberg-Witten invariants of $X$ using the basic classes and invariants of $K3\#\,\CPb$: Up to sign the symplectic manifold $X$ has only one basic class which descends from the canonical class of $K3\#\,\CPb$. By Theorem ~\ref{SW2} (or by Taubes theorem \cite{Taubes1}), the value of the Seiberg-Witten function on these classes, $\pm K_{X(n)}$, are ${ \pm 1}$.           

By using Fintushel-Stern's rational blowdown formula we can also determine the Seiberg-Witten invariants of $X(n)$ for $2 \leq n \leq 6$ directly by computing the algebraic intersection number of the classes $\pm E_{1} \pm E_{2}$ with the classes of $-4$ spheres of six $C_2$ configurations. Note that these $-4$ spheres are the components of the singular fibers of $K3\#2\,\CPb$. As three regions on the genus two surface, where the rational blowdowns are performed always intersect the two exceptional divisors once (cf. \cite{AP}), we compute the intersection numbers as follows: Let $S$ denote the homology class of $-4$ sphere of $C_{2}$. We have $S \cdot E_{1} = S \cdot E_{2} = 1$. Consequently, $S \cdot \pm (E_{1} + E_{2}) = \pm 2$ and $S \cdot \pm (E_{1} - E_{2}) = 0$. Since among the four classes $\pm E_{1} \pm E_{2}$ only $E_{1} + E_{2}$ and $-(E_{1} + E_{2})$ have intersection $\pm 2$ with $-4$ spheres of $C_2$, it follows from Theorem~\ref{SW1} that these are only two classes that descend to $X(n)$ for $2 \leq n \leq 6$.

Next, we apply the connected sum theorem for the Seiberg-Witten invariant and show that $SW$ function is trivial for $3\CP\# (21-n)\CPb$ for $2 \leq n \leq 6$. Since the Seiberg-Witten invariants are diffeomorphism invariants, we conclude that $X(n)$ for $2 \leq n \leq 6$ are not diffeomorphic to $3\CP\# (21-n)\CPb$ for $2 \leq n \leq 6$. 

Using the Seiberg-Witten basic classes, the minimality of $X(n)$ for $2 \leq n \leq 6$ follows from the the fact that $X(n)$ for $2 \leq n \leq 6$ has no two basic classes $K$ and $K'$ such that $(K - K')^2 = -4$. Notice that $(K_{X(n)} - (-K_{X(n)}))^2 = 4({K_X(n)}^{2}) = 16$ for $2 \leq n \leq 6$ in our case.

The symplectic Kodaira dimension $\kappa^s(X(n))$ for $2 \leq n \leq 6$ are equal to $\kappa^s = 1$ for $n=2$ and $\kappa^s = 2$ for $3 \leq n \leq 6$. The $X(2)$ has $\kappa^s(X(2))=1$ since it is a minimal exotic copy of $3\CP\# 19\CPb$ (cf. \cite{DZ, Li}). Finally, $\kappa^s(X(n))=2$ for $3 \leq n \leq 6$ since they are also minimal and have $c_1^2(X(n)) \geq 0$.

Thus the $X(n)$ for $1 \leq n \leq 6$ are simply-connected, symplectic $4$-manifolds homeomorphic but not diffeomorphic to $3\CP\# (21-n)\CPb$ with $b_2^+=3$ and symplectically minimal for $2 \leq n \leq 6$ with symplectic Kodaira dimension $\kappa^s = 0$ for $n=1$, $\kappa^s = 1$ for $n=2$ and $\kappa^s = 2$ for $3 \leq n \leq 6$.

\end{proof}

\section{Construction of $X(7)$}

In this section, we will find one more lantern relation from positive relation of $X(6)$ by replacing the Matsumoto's fibration summand $\mathbb{S}^2 \times \mathbb{T}^2 \# 4 \CPb$ with its globally conjugated copy having the positive relation of

\begin{align}
(c_{1})^{2}(Y_1 Y_2 Y_c)^{2} = 1
\end{align} 

which is introduced in Proposition ~\ref{m2}.

With this relation we can replace the Matsumoto's fibration summand word $(B_0 B_1 B_2 \delta)^2$ in $X(6)$ with $(c_{1})^{2}(Y_1 Y_2 Y_c)^{2}$ which would allow us to perform the seventh lantern substitution.

We can now perform one more lantern substitution via the following Hurwitz move.

\begin{proof}

We begin with relation of $X(6)$

$\textbf{X(6)} = (B_0 B_1 B_2 \delta)^2 \cdot {}_{c_{1}}(c_2)  \cdot {}_{c_3 c_{4}}(\delta x) \cdot {}_{c_3^2}(c_4) \cdot {}_{c_{3}^4 c_{1}^{-1}}(c_2) \cdot c_5 \cdot {}_{c_3^2 c_{1}^{-1} c_3}(\bar{k} \bar{h}) \cdot {}_{c_{3}^2}(c_2) \cdot c_5 \bar{k} \bar{h} \cdot {}_{c_{1}^{-1}c_{2}^{-1}c_{3}}(c_4) \cdot {}_{c_{1}^{-1}c_{2}^{-1}}(k h) \cdot {}_{c_{1}^{-1}c_{2}^{-1}c_{3}^{-1}}(c_4) = 1$   \\

$ \sim (c_{1})^{2}(Y_1 Y_2 Y_c)^{2} \cdot {}_{c_{1}}(c_2)  \cdot {}_{c_3 c_{4}}(\delta x) \cdot {}_{c_3^2}(c_4) \cdot {}_{c_{3}^4 c_{1}^{-1}}(c_2) \cdot c_5 \cdot {}_{c_3^2 c_{1}^{-1} c_3}(\bar{k} \bar{h}) \cdot {}_{c_{3}^2}(c_2) \cdot c_5 \bar{k} \bar{h} \cdot {}_{c_{1}^{-1}c_{2}^{-1}c_{3}}(c_4) \cdot {}_{c_{1}^{-1}c_{2}^{-1}}(k h) \cdot {}_{c_{1}^{-1}c_{2}^{-1}c_{3}^{-1}}(c_4)$   \\

$ \overset{C} {\rightarrow} \ (Y_1 Y_2 Y_c)^{2} \cdot {}_{c_{1}}(c_2)  \cdot {}_{c_3 c_{4}}(\delta x) \cdot {}_{c_3^2}(c_4) \cdot {}_{c_{3}^4 c_{1}^{-1}}(c_2) \cdot c_5 \cdot {}_{c_3^2 c_{1}^{-1} c_3}(\bar{k} \bar{h}) \cdot {}_{c_{3}^2}(c_2) \cdot c_5 \bar{k} \bar{h} \cdot {}_{c_{1}^{-1}c_{2}^{-1}c_{3}}(c_4) \cdot {}_{c_{1}^{-1}c_{2}^{-1}}(k h) \cdot {}_{c_{1}^{-1}c_{2}^{-1}c_{3}^{-1}}(c_4) \cdot c_{1}^{2}$   \\

$ \sim (Y_1 Y_2 Y_c)^{2} \cdot {}_{c_{1}}(c_2)  \cdot {}_{c_3 c_{4}}(\delta x) \cdot {}_{c_3^2}(c_4) \cdot {}_{c_{3}^4 c_{1}^{-1}}(c_2) \cdot {}_{c_5c_3^2 c_{1}^{-1} c_3}(\bar{k} \bar{h}) \cdot {}_{c_5c_{3}^2}(c_2) \cdot c_{5}^2 \cdot \bar{k} \bar{h} \cdot {}_{c_{1}^{-1}c_{2}^{-1}c_{3}}(c_4) \cdot {}_{c_{1}^{-1}c_{2}^{-1}}(k h) \cdot {}_{c_{1}^{-1}c_{2}^{-1}c_{3}^{-1}}(c_4)\cdot c_{1}^{2}$   \\

$ \sim (Y_1 Y_2 Y_c)^{2} \cdot {}_{c_{1}}(c_2)  \cdot {}_{c_3 c_{4}}(\delta x) \cdot {}_{c_3^2}(c_4) \cdot {}_{c_{3}^4 c_{1}^{-1}}(c_2) \cdot {}_{c_5c_3^2 c_{1}^{-1} c_3}(\bar{k} \bar{h}) \cdot {}_{c_5c_{3}^2}(c_2) \cdot  {}_{c_{5}^{2}}(\bar{k} \bar{h}) \cdot {}_{c_{5}^{2}c_{1}^{-1}c_{2}^{-1}c_{3}}(c_4) \cdot {}_{c_{5}^{2}c_{1}^{-1}c_{2}^{-1}}(k h) \cdot {}_{c_{5}^{2}c_{1}^{-1}c_{2}^{-1}c_{3}^{-1}}(c_4)\cdot c_{5}^{2}c_{1}^{2}$   \\

$ \overset{L} {\rightarrow} \ (Y_1 Y_2 Y_c)^{2} \cdot {}_{c_{1}}(c_2)  \cdot {}_{c_3 c_{4}}(\delta x) \cdot {}_{c_3^2}(c_4) \cdot {}_{c_{3}^4 c_{1}^{-1}}(c_2) \cdot {}_{c_5c_3^2 c_{1}^{-1} c_3}(\bar{k} \bar{h}) \cdot {}_{c_5c_{3}^2}(c_2) \cdot  {}_{c_{5}^{2}}(\bar{k} \bar{h}) \cdot {}_{c_{5}^{2}c_{1}^{-1}c_{2}^{-1}c_{3}}(c_4) \cdot {}_{c_{5}^{2}c_{1}^{-1}c_{2}^{-1}}(k h) \cdot {}_{c_{5}^{2}c_{1}^{-1}c_{2}^{-1}c_{3}^{-1}}(c_4) \cdot c_{3} \delta x = \textbf{X(7)}$   \\

\end{proof}

We note that the total space of the $X(7)$ is a symplectically minimal exotic copy of $3\CP\# 14\CPb$ with $b_2^+=3$ and $c_1^2(X(7)) = 5$ by the similar argument as above.

\rmk The observation above that one more lantern relation could be found which allows a sequence of seven rational blowdowns to be performed on $K3\#2\,\CPb$ to acquire $X(7)$ rather six rational blowdowns could potentially have a deeper geometric meaning rather than merely constructing a smaller exotic 4-manifold. By the work of E. Hironaka \cite{Hironaka}, one can `read' latern relation from the planar line arrangement (Thm 1.2 in \cite{Hironaka}). In our case the lantern relation corresponds to the triangle formed by the lines in the branch locus of the double branched covering description for the $K3\#2\,\CPb$. Interestingly, there are correspondingly seven triangles in generic arrangement of six lines which is the linear system $|6\tilde{L}|$ for our branch locus. It is tempting to postulate that one can find seven lantern relation in the global monodromy of $K3\#2\,\CPb$ and we have found them by mapping class group factorization calculus. The concrete interplay between the change in the branch locus of the Hyperelliptic Lefschetz fibration and its braid group monodromy in connection with the change in the topological structure of the Hyperelliptic Lefschetz fibraiton and its mapping class group monodromy is an interesting avenue to be investigated upon which was surveyed by I. Smith and D. Auroux in \cite{AS}. We will continue our investigation on this topic in an upcoming project \cite{Park}.

\begin{figure}[ht]
\begin{center}
\includegraphics[scale=.3]{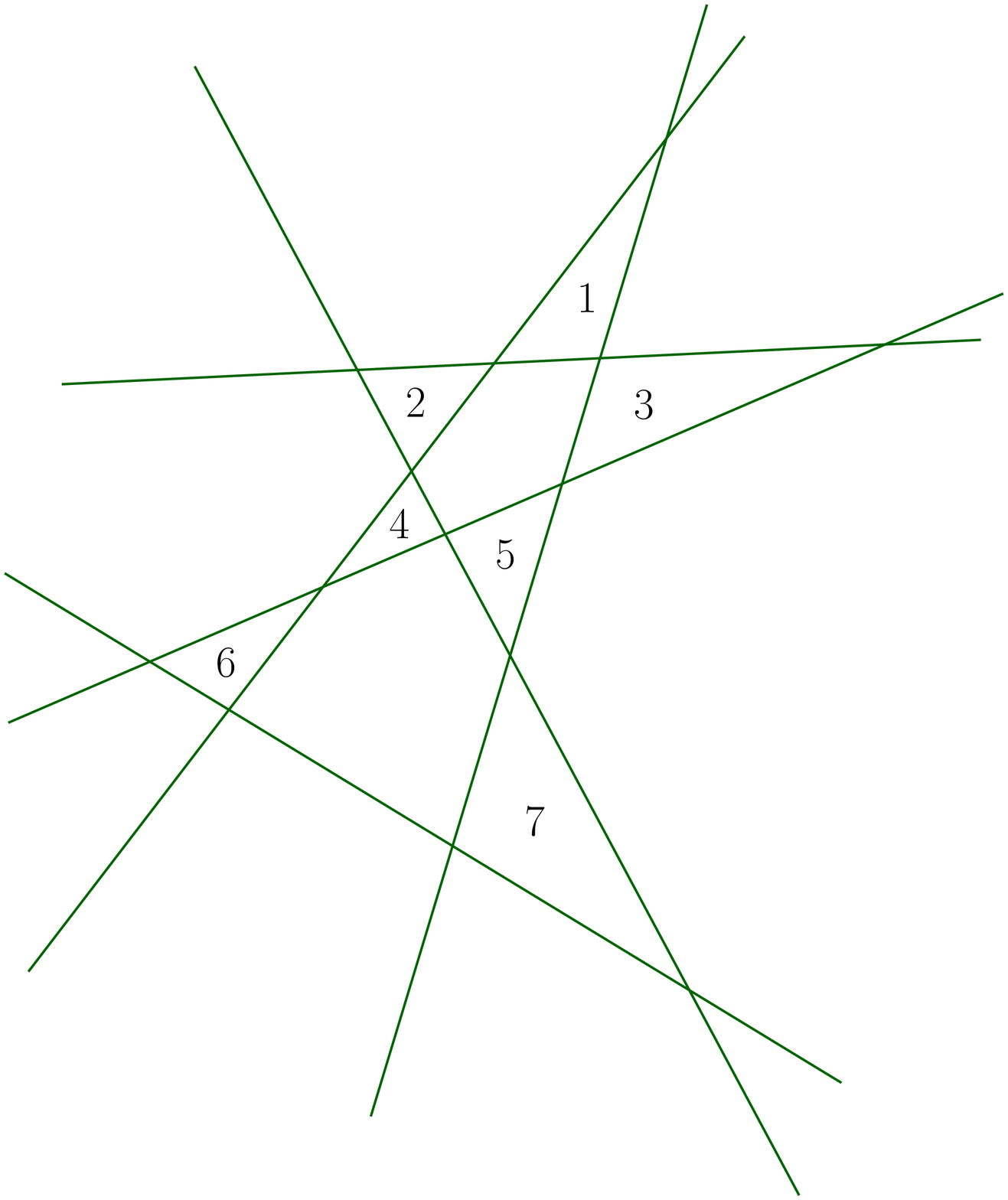}
\caption{Seven Triangles on Branch Locus for $K3\#2\CPb$}
\label{fig:branch}
\end{center}
\end{figure}

\section{Fiber Sum Decomposability and Decomposition}

In this section we prove the decomposability of $X(n)$ for $2 \leq n \leq 6$ and consider their possible decompositions under the genus 2 fiber sum. 

\begin{thm}[Decomposability of $X(n)$ for $2 \leq n \leq 6$]\label{Split}

The genus 2 Lefschetz fibrations $X(n)$ for $2 \leq n \leq 6$ are all decomposable into nontrivial fiber sum of other genus 2 Lefschetz fibrations. Namely, $X(2)$ is isomorphic to an untwisted fiber sum of Matsumoto fibration on $\mathbb{S}^2 \times \mathbb{T}^2 \# 4 \CPb$ with Lefschetz fibration on $Z(0) = \CP \# 13\CPb$. Additionally, $X(3), X(4), X(5),X(6)$ are isomorphic to an untwisted fiber sum of Matsumoto fibration on $\mathbb{S}^2 \times \mathbb{T}^2 \# 4 \CPb$ with $Z(1), Z(2), Z(3),Z(4)$ respectively.
\end{thm}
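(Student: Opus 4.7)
The plan is to read off the decomposition directly from the monodromy factorizations already computed in Lemma~\ref{sixl} (giving $X(n)$ for $2\le n\le 6$) and Lemma~\ref{fourl} (giving $Z(m)$ for $0\le m\le 4$). The key underlying principle, which was recorded in the proof of Proposition~\ref{twol}, is that concatenation of positive relators in $\Gamma_{2}$ corresponds on the total space level to symplectic fiber summation of the associated genus~2 Lefschetz fibrations along a regular fiber via the identity gluing map; that is, an \emph{untwisted} fiber sum. Thus whenever we can exhibit the monodromy of a Lefschetz fibration as a concatenation $W_{1}\cdot W_{2}$ of two positive relators, the total space decomposes as an untwisted fiber sum of the corresponding two Lefschetz fibrations.

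First I would line up the two lists of positive relators side by side. Comparing the displayed relators for $X(n)$ ($2\le n\le 6$) collected after Lemma~\ref{sixl} with the relators for $Z(m)$ ($0\le m\le 4$) collected after Lemma~\ref{fourl}, one sees by direct inspection that each $X(n)$ is written as the concatenation
\[
X(n) \;=\; (B_{0}B_{1}B_{2}\delta)^{2} \cdot Z(n-2), \qquad 2\le n\le 6.
\]
This is exactly why in Lemma~\ref{sixl} the six lantern substitutions were carried out in the ``right half'' of the $X(0)$ relation, and only by Hurwitz moves (commutations and braid substitutions) -- explicitly avoiding the conjugation move $\overset{C}{\to}$ -- so that the left factor $(B_{0}B_{1}B_{2}\delta)^{2}$ is preserved intact throughout the computation (this avoidance was remarked on right after Lemma~\ref{fourl}). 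Thus the identification $W_{n}=Z(n-2)$ is a bookkeeping check, not a fresh computation.

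Having matched the relators, I would invoke the standard dictionary: the left factor $(B_{0}B_{1}B_{2}\delta)^{2}$ is the positive relator of Matsumoto's genus~2 Lefschetz fibration on $\mathbb{S}^{2}\times\mathbb{T}^{2}\#4\CPb$, and for each $0\le m\le 4$ the relator $Z(m)$ is, by construction in Lemma~\ref{fourl}, the positive relator of a genus~2 Lefschetz fibration on the manifold denoted $Z(m)$ (which is $\CP\#13\CPb$ for $m=0$, $\CP\#12\CPb$ for $m=1$ by Proposition~\ref{181}, and a rational blowdown of $\CP\#13\CPb$ in general). Consequently, the concatenation identity above promotes to the topological statement
\[
X(n) \;\cong\; \bigl(\mathbb{S}^{2}\times\mathbb{T}^{2}\#4\CPb\bigr)\;\#_{f}\; Z(n-2),
\]
with the gluing diffeomorphism being the identity, i.e.\ an untwisted fiber sum. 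Nontriviality is automatic because both positive relators are nonempty, so neither summand is a trivial genus~2 fibration over $\mathbb{S}^{2}$.

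The main obstacle, such as it is, is purely bookkeeping: one must verify that the Hurwitz moves used inside the $Z$-block of the $X(n)$ computation in Lemma~\ref{sixl} are performed identically to those used to produce $Z(m)$ in Lemma~\ref{fourl}, so that the right-hand factor of the $X(n)$ relator really is the $Z(n-2)$ relator on the nose. Since Lemma~\ref{sixl} was written with exactly this parallelism in mind, this reduces to termwise comparison. After that, the topological conclusion is immediate from the concatenation-equals-untwisted-fiber-sum principle, and Theorem~\ref{Split} follows.
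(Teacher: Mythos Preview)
Your proposal is correct and follows essentially the same approach as the paper: both read off the decomposition by comparing the positive relators of $X(n)$ from Lemma~\ref{sixl} with those of $Z(n-2)$ from Lemma~\ref{fourl}, noting that the lantern substitutions were performed only in the $Z$-block without using the conjugation move $\overset{C}{\to}$, so that the Matsumoto factor $(B_{0}B_{1}B_{2}\delta)^{2}$ survives intact and the concatenation-equals-untwisted-fiber-sum principle applies. Your write-up is in fact more explicit and carefully organized than the paper's own (very terse) argument.
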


\begin{proof}
As $Z(0) = \CP \# 13\CPb$ portion of the monodromy can be blown down independently (not using the conjugation $ \overset{C} {\rightarrow}$) by the above Lemma 11, it is easy to see that the untwisted fiber sum of Matsumoto's fibration on $\mathbb{S}^2 \times \mathbb{T}^2 \# 4\CPb$ having the positive relation $(\eta_1 \delta \eta_2 \eta_3)^2$ with $Z(m)$ having the positive relations of Lemma 11 for $0 \leq m \leq 4$ will give exotic copies $X(2), X(3), X(4), X(5), X(6)$ as indicated by the above monodromy factorizations of Lemma 12 which are the positive relations of $X(n)$ for $2 \leq n \leq 6$. 
\end{proof}

\begin{thm}[Unique decomposition of $X(2)$]\label{X(2)}
The genus 2 Lefschetz fibration $X(2)$ which has $n$ irreducible singular fibers and $s$ reducible singular fibers pair $(n,s) = (26,2)$ must decompose under the genus 2 fiber sum having the indecomposable summands of Matsumoto's fibration on $\mathbb{S}^2 \times \mathbb{T}^2 \# 4\CPb$ and the genus 2 Lefschetz fibration on $Z(0)=\CP \# 13\CPb$. Each summands are determined up to diffeomorphism.
\end{thm}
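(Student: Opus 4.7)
The plan is to enumerate the finitely many numerical splittings of the singular-fiber data $(n,s) = (26,2)$ permitted by the abelianization $\Gamma_2^{ab} \cong \Z/10\Z$, eliminate all but one, and then classify the two surviving summands up to diffeomorphism. Suppose $X(2) \cong Y(1) \#_{\Sigma_2} Y(2)$ is any nontrivial genus~$2$ fiber sum into relatively minimal Lefschetz fibrations over $\mathbb{S}^2$, with numerical invariants $(n_i, s_i)$ for each factor. Since singular fibers add under the fiber sum, $n_1 + n_2 = 26$ and $s_1 + s_2 = 2$; Stipsicz's theorem forces $n_i \geq 1$, and each $n_i + 2 s_i$ must be divisible by ten. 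Running through these constraints leaves, up to swapping the summands, exactly three possibilities: \textbf{(A)} $((20,0),(6,2))$, \textbf{(B)} $((10,0),(16,2))$, and \textbf{(C)} $((8,1),(18,1))$.

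Next I will rule out cases \textbf{(B)} and \textbf{(C)}. In case \textbf{(C)}, Proposition~\ref{181} immediately identifies the $(18,1)$ summand with $\CP\#12\CPb$. For the $(8,1)$ summand I will apply the same template as in the proofs of Propositions~\ref{200} and~\ref{181}: compute $(e,\sigma,c_1^2) = (5,-5,-5)$, pin down $(b_2^+, b_2^-, b_1)$ from the Noether-style identity together with Stipsicz's $b_1 \leq 3$, invoke Taubes's obstruction and the geography table of~\cite{Sato1} to force $b_2^+ = 1$, and then use Liu's inequality together with Sato's Theorem~3.1 on essential exceptional spheres to conclude that the summand is a rational surface. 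The fiber-sum additivities $b_2^+(Y(1)) + b_2^+(Y(2)) = b_2^+(X(2)) + 1 = 4$, $\sigma(Y(1)) + \sigma(Y(2)) = -16$, and $c_1^2(Y(1)) + c_1^2(Y(2)) = -8$, together with the Seiberg--Witten data of the minimal exotic $X(2)$, will then rule out the purported decomposition. Case \textbf{(B)} is handled identically, with the $(10,0)$ summand having $c_1^2 = -6$ again being forced rational by the same geography argument, after which the same Seiberg--Witten matching fails.

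This leaves case \textbf{(A)}. Proposition~\ref{200} identifies the $(20,0)$ summand, up to diffeomorphism, with the genus~$2$ Lefschetz fibration on $Z(0) = \CP\#13\CPb$. It remains to show that every relatively minimal genus~$2$ Lefschetz fibration over $\mathbb{S}^2$ with $(n,s) = (6,2)$ is diffeomorphic to Matsumoto's fibration on $\mathbb{S}^2 \times \mathbb{T}^2 \# 4\CPb$. For this I will again follow the template of Proposition~\ref{200}: from $(e, \sigma, c_1^2) = (4,-4,-4)$ the Noether-type identity gives $b_2^+ = b_1 - 1$, so $b_1 \in \{2,3\}$; Taubes's nonminimality obstruction applied to the $b_1 = 3$ candidate together with Sato's geography rule out the $b_2^+ = 2$ case, leaving $b_2^+ = 1$ and $b_1 = 2$. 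Then Liu's constraint $c_1^2(\widetilde{X}) \geq 0$ for minimal symplectic $4$-manifolds with $b_2^+ = 1$ that are not rational or ruled pins down the minimal model of the total space as $\mathbb{S}^2 \times \mathbb{T}^2$, and counting essential exceptional spheres via Sato's Theorem~3.1 against the number of blow-ups forces the total space to be $\mathbb{S}^2 \times \mathbb{T}^2 \# 4\CPb$.

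I expect the main obstacle to lie in this last identification of the $(6,2)$ summand. Unlike the $(20,0)$ and $(18,1)$ settings, the $(6,2)$ invariants are compatible with a non-rational ruled minimal model, so the ``rational surface plus Freedman'' punchline of Propositions~\ref{200} and~\ref{181} is not directly available. Moreover, even after pinning down the total space as $\mathbb{S}^2 \times \mathbb{T}^2 \# 4\CPb$, to conclude that Matsumoto's fibration is the unique relatively minimal genus~$2$ Lefschetz fibration with $(n,s) = (6,2)$ up to diffeomorphism I anticipate needing to combine a Hurwitz-equivalence analysis at the mapping class group level with the existing classification of $8$-singular-fiber genus~$2$ fibrations (in the spirit of Sato's work cited in~\cite{Sato2}). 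Once this uniqueness is established the theorem follows; the enumeration and the rule-outs of cases \textbf{(B)} and \textbf{(C)} are relatively routine in comparison.
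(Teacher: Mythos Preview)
Your enumeration into cases \textbf{(A)}, \textbf{(B)}, \textbf{(C)} is correct and matches the paper. However, the paper's proof is far more direct than your plan, and your route to eliminating \textbf{(B)} and \textbf{(C)} contains a real gap.

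For \textbf{(B)} and \textbf{(C)} the paper does not attempt any geography or Seiberg--Witten argument at all: it simply invokes Remark~5.1 of \cite{Sato2}, which asserts outright that the pairs $(n,s)=(10,0)$ and $(n,s)=(8,1)$ cannot occur for any genus~$2$ Lefschetz fibration over $\mathbb{S}^2$. That kills both cases in one line. Your plan to run the template of Propositions~\ref{200} and~\ref{181} on the $(8,1)$ (or $(10,0)$) summand breaks down at the key step: in those propositions one has $e+\sigma=4$, giving $b_2^+=b_1+1$, whereas for $(8,1)$ and $(10,0)$ one computes $e+\sigma=0$, so $b_2^+=b_1-1$. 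Forcing $b_2^+=1$ then yields $b_1=2$, which is incompatible with the conclusion ``rational surface''; the Liu/Sato argument you quote only excludes \emph{not rational nor ruled}, and here the irrationally ruled case (over $\mathbb{T}^2$) is not ruled out by that template. Your subsequent step, using the additivity $b_2^+(Y(1))+b_2^+(Y(2))=b_2^+(X(2))+1$, is also not correct as stated: that identity is sensitive to the $b_1$ of the summands and does not hold in the form you wrote when $b_1(Y(i))\neq 0$.

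For case \textbf{(A)} the paper again avoids all of the work you anticipate: the $(20,0)$ summand is $\CP\#13\CPb$ by Proposition~\ref{200}, and the $(6,2)$ summand is identified with Matsumoto's fibration on $\mathbb{S}^2\times\mathbb{T}^2\#4\CPb$ by a direct citation of Proposition~4.1 of \cite{Sato2}. So the ``main obstacle'' you flag is already a theorem in the literature and need not be reproved.
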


\begin{proof}
Let us suppose $X(2)$ decomposes into two genus 2 Lefschetz fibrations $X(2)= Y(1) \# Y(2)$ where both $Y(1), Y(2)$ are relatively minimal genus 2 Lefschetz fibrations. There are two possible cases to consider for the distribution of reducible singular fibers and hence determine the possible decompositions up to diffeomorphism.

First case is when the two reducible singular fibers distribute wholly to one of the summand (i.e.  $s=(2,0)$) where without the loss of generality, we can assume $Y(1)$ has $(n,s) = (6,2)$ and $Y(2)$ has $(n,s) = (20,0)$. Then $Y(1)$ is diffeomorphic to Lefschetz fibrations $\mathbb{S}^2 \times \mathbb{T}^2 \# 4\CPb$ by the proposition 4.1 \cite{Sato2} and $Y(2)$ is diffeomorphic to $\CP \# 13\CPb$ by above proposition on characterization of genus 2 Lefschetz fibration with 20 irreducible singular fibers. Another possibility is when $Y(1)$ has $(n,s)=(16,2)$ and $Y(2)$ has $(n,s)=(10,0)$ and we know this is impossible by the remark 5.1 of \cite{Sato2}, we know $(n,s)=(10,0)$ (the $(n,s)$ pair for $Y(2)$) cannot occur as the pair of number of singular fibers for genus 2 Lefschetz fibration. Note that these two decompositions are the only possibility for $s=(2,0)$ since  $n+2s \equiv 0 \pmod{10}$.

Second case is when $s=(1,1)$, where without the loss of generality, we can assume $Y(1)$ has $(n,s)=(8,1)$ and $Y(2)$ has $(n,s)=(18,1)$ then this is impossible by the remark 5.1 of \cite{Sato2}, as we know $(n,s)=(8,1)$ (the $(n,s)$ pair for $Y(2)$) cannot occur as the pair of number of singular fibers for genus 2 Lefschetz fibration. Note that this decomposition is the only possibility for $s=(1,1)$ since $n+2s \equiv 0 \pmod{10}$.

\end{proof}

\begin{prop}[Decompositions of $X(3)$]
The genus 2 Lefschetz fibration $X(3)$ which has $n$ irreducible singular fibers and $s$ reducible singular fibers pair $(n,s) = (24,3)$ must decompose under the genus 2 fiber sum having the summand of Matsumoto's fibration on $\mathbb{S}^2 \times \mathbb{T}^2 \# 4\CPb$ and the genus 2 Lefschetz fibration on $Z(1)=\CP \# 12\CPb$ or the genus 2 Lefschetz fibration on $\mathbb{S}^2 \times \mathbb{T}^2 \# 3\CPb$ and the genus 2 Lefschetz fibration on $Z(0)=\CP \# 13\CPb$. Each summands are determined up to diffeomorphism.
\end{prop}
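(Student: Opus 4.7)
The plan is to mimic the case analysis used in the proof of Theorem~\ref{X(2)}. Suppose that $X(3) = Y(1) \# Y(2)$ is a nontrivial genus $2$ fiber sum decomposition with both summands relatively minimal, and write $(n_i, s_i)$ for the pair of irreducible and reducible singular fiber counts of $Y(i)$. Additivity of singular fibers under fiber sum gives $n_1 + n_2 = 24$ and $s_1 + s_2 = 3$, while each summand must independently satisfy $n_i + 2 s_i \equiv 0 \pmod{10}$ (since $\Gamma_2^{\mathrm{ab}} \cong \Z / 10 \Z$) and must have at least one non-separating vanishing cycle by \cite{Stipsicz1}. Up to swapping the summands, the distribution of reducible fibers is either $(s_1, s_2) = (0, 3)$ or $(s_1, s_2) = (1, 2)$.

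First consider $(s_1, s_2) = (0, 3)$. The congruences force $(n_1, n_2) \in \{(0, 24), (10, 14), (20, 4)\}$. The pair $(0, 24)$ is ruled out since a relatively minimal Lefschetz fibration must have at least one singular fiber, and $(10, 14)$ is ruled out because $(n, s) = (10, 0)$ does not occur for a genus $2$ Lefschetz fibration by Remark 5.1 of \cite{Sato2}. The only surviving possibility is $(20, 4)$, in which case $Y(1)$ is diffeomorphic to $Z(0) = \CP \# 13\CPb$ by Proposition~\ref{200} and $Y(2)$ has $(n, s) = (4, 3)$. Next, for $(s_1, s_2) = (1, 2)$, the congruences force $(n_1, n_2) \in \{(8, 16), (18, 6)\}$; the first is ruled out because $(8, 1)$ is forbidden by Remark 5.1 of \cite{Sato2}, and the second gives $Y(1)$ diffeomorphic to $Z(1) = \CP \# 12\CPb$ by Proposition~\ref{181} together with $Y(2)$ diffeomorphic to Matsumoto's fibration on $\mathbb{S}^2 \times \mathbb{T}^2 \# 4\CPb$ by Proposition 4.1 of \cite{Sato2}.

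The main obstacle is to show that a genus $2$ Lefschetz fibration with $(n, s) = (4, 3)$ is diffeomorphic to $\mathbb{S}^2 \times \mathbb{T}^2 \# 3\CPb$. Computing invariants gives $e = 3$, $\sigma = -3$, and $c_1^2 = -3$, while $2 - 2 b_1 + 2 b_2^+ = e + \sigma = 0$ forces $b_2^+ = b_1 - 1$. The Stipsicz bound yields $b_1 \leq 3$, and $b_2^+ \geq 1$ for a symplectic $4$-manifold, so $(b_2^+, b_1) \in \{(1, 2), (2, 3)\}$. The case $(2, 3)$ is ruled out by combining Taubes' theorem \cite{Taubes2} (since $c_1^2 < 0$ with $b_2^+ > 1$ forces non-minimality) with Table 1 of \cite{Sato1}, which admits no non-minimal genus $2$ Lefschetz fibration over $\mathbb{S}^2$ with $b_2^+ > 1$ and $(n, s) = (4, 3)$. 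Hence $(b_2^+, b_1, b_2^-) = (1, 2, 4)$, matching the homological type of $\mathbb{S}^2 \times \mathbb{T}^2 \# 3\CPb$.

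To upgrade this homological match to a diffeomorphism, I would adapt the rational-surface argument of Propositions~\ref{200} and~\ref{181}: since $b_1 = 2 \neq 0$, the minimal model $\widetilde{Y(2)}$ cannot be rational, and Liu's inequality \cite{Liu1} together with the bound on $(-1)$-spheres from Theorem 3.1 of \cite{Sato1} should force $\widetilde{Y(2)}$ into the irrational ruled regime with base of genus one and total Euler characteristic zero. This pins $\widetilde{Y(2)}$ down to an $\mathbb{S}^2$-bundle over $\mathbb{T}^2$, which one must then identify with the trivial bundle $\mathbb{S}^2 \times \mathbb{T}^2$ (ruling out the twisted bundle, for example, via the parity of the intersection form or a spin comparison), after which $Y(2)$ is recovered from $\widetilde{Y(2)}$ by three blowups and the desired diffeomorphism follows. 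This last identification is where I expect the technical difficulty to concentrate.
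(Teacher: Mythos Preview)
Your case analysis matches the paper's proof (and is in fact a bit more careful: you explicitly list and rule out the subcases $(n_1,n_2)=(0,24)$ and $(10,14)$ in the $s=(0,3)$ branch, which the paper glosses over). The enumeration of surviving possibilities and the identifications via Proposition~\ref{200}, Proposition~\ref{181}, and Remark~5.1 of \cite{Sato2} are exactly as in the paper.

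The only substantive difference is your treatment of the summand with $(n,s)=(4,3)$. The paper does not argue this case from scratch; it simply invokes Proposition~4.1 of \cite{Sato2}, which already characterizes genus~2 Lefschetz fibrations with seven singular fibers and in particular identifies the $(4,3)$ case as $\mathbb{S}^2\times\mathbb{T}^2\#3\CPb$. Your two final paragraphs are thus an attempt to reprove (part of) Sato's result, and you leave it incomplete. For what it's worth, your sketch is on the right track, and your anticipated ``technical difficulty'' of distinguishing the trivial from the twisted $\mathbb{S}^2$-bundle over $\mathbb{T}^2$ is in fact a non-issue: these two bundles become diffeomorphic after a single blowup, so after three blowups there is nothing to distinguish. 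But there is no need for any of this---just cite Sato's Proposition~4.1 as the paper does, and the proof is complete.
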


\begin{proof}
Let us suppose $X(3)$ decomposes into two genus 2 Lefschetz fibrations $X(3)= Y(1) \# Y(2)$ where both $Y(1), Y(2)$ are relatively minimal genus 2 Lefschetz fibrations. There are two possible cases to consider for the distribution of reducible singular fibers and hence determine the possible decompositions up to diffeomorphism.

First case is when the three reducible singular fibers distribute wholly to one of the summand (i.e.  $s=(3,0)$) where without the loss of generality, we can assume $Y(1)$ has $(n,s)=(4,3)$ and $Y(2)$ has $(n,s)=(20,0)$. Then $Y(1)$ is diffeomorphic to Lefschetz fibrations $\mathbb{S}^2 \times \mathbb{T}^2 \# 3\CPb$ by the proposition 4.1 \cite{Sato2} and $Y(2)$ is diffeomorphic to $\CP \# 13\CPb$ by above proposition on characterization of genus 2 Lefschetz fibration with 20 irreducible singular fibers. Note that this decomposition is the only possibility for $s=(3,0)$ since $n+2s \equiv 0 \pmod{10}$.

Second case is when $s=(1,2)$, where without the loss of generality, we can assume $Y(1)$ has $(n,s)=(8,1)$ and $Y(2)$ has $(n,s)=(16,2)$ this is impossible by the remark 5.1 of \cite{Sato2}, as we know $(n,s)=(8,1)$ (the $(n,s)$ pair for $Y(1)$) cannot occur as the pair of number of singular fibers for genus 2 Lefschetz fibration.
Another possibility is when $Y(1)$ has $(n,s)=(18,1)$ then $Y(2)$ has $(n,s)=(6,2)$ we know then $Y(1)$ is diffeomorphic to $\CP \# 12\CPb$ by above proposition on characterization of genus 2 Lefschetz fibration with 18 irreducible singular fibers and 1 reducible singular fiber and $Y(2)$ is diffeomorphic to genus 2 Lefschetz fibration $\mathbb{S}^2 \times \mathbb{T}^2 \# 4\CPb$ by the proposition 4.1 \cite{Sato2}. Note that these two decompositions are the only possibility for $s=(3,1)$ since  $n+2s \equiv 0 \pmod{10}$.

\end{proof}

\rmk It is now known there exists a genus 2 Lefschetz fibration structure on $\mathbb{S}^2 \times \mathbb{T}^2 \# 3\CPb$ with seven singular fibers by the work of I. Baykur and M. Korkmaz \cite{Baykur2} (In fact, they were able to show all the possible cases of minimal genus-2 Lefschetz fibrations whose total spaces are homeomorphic to simply-connected 4-manifold with $b_2^+=3$.)

\begin{prop}[Decompositions of $X(4)$]\label{X(4)}
The genus 2 Lefschetz fibration $X(4)$ which has $n$ irreducible singular fibers and $s$ reducible singular fibers pair $(n,s) = (22,4)$ must decompose under genus 2 fiber sum having the summand of Matsumoto's fibration on $\mathbb{S}^2 \times \mathbb{T}^2 \# 4\CPb$ and the genus 2 Lefschetz fibration on $Z(2)=\CP \# 11\CPb$ or the genus 2 Lefschetz fibration on $\mathbb{S}^2 \times \mathbb{T}^2 \# 3\CPb$ and the genus 2 Lefschetz fibration on $Z(1)=\CP \# 12\CPb$. Each summands are determined up to diffeomorphism except for the $Z(2)$ which is only determined up to homeomorphism.
\end{prop}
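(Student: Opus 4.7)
\par The plan is to imitate the case analysis of Theorem~\ref{X(2)} and of the preceding proposition on $X(3)$: assume $X(4) = Y(1) \# Y(2)$ is a nontrivial genus $2$ fiber sum decomposition with both summands relatively minimal, and exhaust the finitely many admissible distributions of the $22$ irreducible and $4$ reducible singular fibers between $Y(1)$ and $Y(2)$. Because the abelianization $\Gamma_2^{ab} \cong \Z/10\Z$ forces $n_i + 2 s_i \equiv 0 \pmod{10}$ for each summand, up to relabeling the distributions of reducible fibers reduce to $(s_1, s_2) \in \{(4,0),(3,1),(2,2)\}$, and within each case only a short list of splittings $n_1 + n_2 = 22$ survives the mod $10$ congruence.

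\par For $(s_1, s_2) = (4, 0)$ the admissible pairs are $(n_1, n_2) \in \{(2,20),(12,10),(22,0)\}$. I would dismiss the trivial summand $(n,s) = (0,0)$ by the standard fact (cf.~\cite{Stipsicz1}) that a Lefschetz fibration over $\mathbb{S}^2$ must carry at least one nonseparating vanishing cycle, and rule out both $(10,0)$ and $(2,4)$ by appealing to Sato's Remark~5.1 in \cite{Sato2}, which catalogs the small $(n,s)$ pairs not realized by any relatively minimal genus $2$ Lefschetz fibration. This eliminates the $(4,0)$ column in its entirety.

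\par For $(s_1, s_2) = (3,1)$ the congruence leaves only $(n_1, n_2) \in \{(4,18),(14,8)\}$, and $(8,1)$ is again dispatched by Sato's Remark~5.1. The surviving splitting $(4,3) + (18,1)$ identifies $Y(1)$ with the Baykur-Korkmaz genus $2$ Lefschetz fibration on $\mathbb{S}^2 \times \mathbb{T}^2 \# 3\CPb$ via Proposition~4.1 of \cite{Sato2} and $Y(2)$ with $Z(1) = \CP \# 12 \CPb$ via Proposition~\ref{181}. For $(s_1, s_2) = (2,2)$ the unique congruence-compatible splitting is $(6,2) + (16,2)$; Proposition~4.1 of \cite{Sato2} pins $Y(1)$ to Matsumoto's fibration on $\mathbb{S}^2 \times \mathbb{T}^2 \# 4 \CPb$, and for $Y(2)$ I would compute $e(Y(2)) = 14$ and $\sigma(Y(2)) = -10$, derive $(b_2^+, b_2^-, b_1) = (1, 11, 0)$ via the same $c_1^2 < 0$ / Taubes / Sato Table~1 argument used in the proofs of Propositions~\ref{200} and~\ref{181}, and invoke Freedman's classification (with non-spinness coming from the presence of a reducible fiber, cf.~\cite{Stipsicz2}) to conclude that $Y(2)$ is homeomorphic to $\CP \# 11 \CPb$, matching $Z(2)$.

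\par The main obstacle, and the reason the statement asserts only a homeomorphism-level identification of the $Z(2)$ summand, is the absence of a diffeomorphism characterization of genus $2$ Lefschetz fibrations with $(n,s) = (16,2)$ analogous to Propositions~\ref{200} and~\ref{181}: without such a rigidity statement one cannot rule out potentially exotic smooth structures on the second summand of the $(6,2) + (16,2)$ decomposition. A secondary technical point is that the argument requires Sato's Remark~5.1 to be strong enough to simultaneously exclude $(10,0)$, $(8,1)$, and $(2,4)$ as $(n,s)$-types of relatively minimal genus $2$ Lefschetz fibrations; once this is granted, the remainder of the proof is a bookkeeping exercise chaining the mod $10$ abelianization constraint with Propositions~\ref{200}, \ref{181}, and Sato's Proposition~4.1.
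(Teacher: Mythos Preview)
Your proof follows essentially the same case analysis as the paper's own proof. The only notable difference is in how you eliminate the putative summand with $(n,s)=(2,4)$: you appeal to Sato's Remark~5.1, whereas the paper invokes Ozbagci's bound $N(2,0)\in\{7,8\}$ on the minimal number of singular fibers in a genus~$2$ Lefschetz fibration over $\mathbb{S}^2$ (the pair $(2,4)$ gives only $6$ singular fibers). Since you flag this reliance explicitly as a ``secondary technical point,'' and since the Ozbagci argument patches it immediately, this is not a genuine gap. Your treatment of the $(16,2)$ summand is in fact more detailed than the paper's, which simply asserts homeomorphism to $\CP\#11\CPb$ without spelling out the $e$, $\sigma$, $c_1^2<0$, Taubes/Sato Table~1, and Freedman chain that you outline; your explanation of why the diffeomorphism argument of Propositions~\ref{200} and~\ref{181} fails here (the contradiction $b_2^- \leq 11$ no longer bites when $b_2^-=11$) is also correct and goes slightly beyond what the paper writes.
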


\begin{proof}
Let us suppose $X(4)$ decomposes into two genus 2 Lefschetz fibrations $X(2)= Y(1) \# Y(2)$ where both $Y(1), Y(2)$ are relatively minimal genus 2 Lefschetz fibrations. There are three possible cases to consider for the distribution of reducible singular fibers and hence determine the possible decompositions up to homeomorphism.

First case is when the four reducible singular fibers distribute wholly to one of the summand (i.e.  $s=(4,0)$) where without the loss of generality, we can assume $Y(1)$ has $(n,s)=(2,4)$ and $Y(2)$ has $(n,s)=(20,0)$. This is impossible as $N(2,0)=\{7,8 \}$ (i.e. the minimal number of singular fibers in a genus 2 Lefschetz fibration over $\mathbb{S}^2$ is 7 or 8 \cite{Ozbagci})  whereas $Y(1)$ has 6 singular fibers. Another possibility is when $Y(1)$ has $(n,s)=(12,4)$ and $Y(2)$ has $(n,s)=(10,0)$ and we know this is also impossible by the remark 5.1 of \cite{Sato2}, as we know $(n,s)=(10,0)$ (the $(n,s)$ pair for $Y(2)$) cannot occur as the pair of number of singular fibers for genus 2 Lefschetz fibration. Note that these two decompositions are the only possibility for $s=(4,0)$ since  $n+2s \equiv 0 \pmod{10}$.

Second case is when $s=(3,1)$, where without the loss of generality, we can assume $Y(1)$ has $(n,s)=(14,3)$ and $Y(2)$ has $(n,s)=(8,1)$ this is impossible by the remark 5.1 of \cite{Sato2}, as we know $(n,s)=(8,1)$ (the $(n,s)$ pair for $Y(2)$) cannot occur as the pair of number of singular fibers for genus 2 Lefschetz fibration.
Another possibility is when $Y(1)$ has $(n,s)=(4,3)$ and $Y(2)$ has $(n,s)=(18,1)$ then we know $Y(1)$ is diffeomorphic to genus 2 Lefschetz fibration $\mathbb{S}^2 \times \mathbb{T}^2 \# 3\CPb$ by the proposition 4.1 \cite{Sato2} and $Y(2)$ is diffeomorphic to $\CP \# 12\CPb$ by the above proposition on characterization of genus 2 Lefschetz fibration with 18 irreducible singular fibers and 1 reducible singular fiber. Note that these two decompositions are the only possibility for $s=(3,1)$ since  $n+2s \equiv 0 \pmod{10}$.

Third case is when $s=(2,2)$, where without the loss of generality, we can assume $Y(1)$ has $(n,s)=(6,2)$ and $Y(2)$ has $(n,s)=(16,2)$ we know then $Y(1)$ is diffeomorphic to genus 2 Lefschetz fibration $\mathbb{S}^2 \times \mathbb{T}^2 \# 4\CPb$ by the proposition 4.1 \cite{Sato2} and $Y(2)$ is homeomorphic to $\CP \# 11\CPb$. Note that this decomposition is the only possibility for $s=(2,2)$ since  $n+2s \equiv 0 \pmod{10}$.

\end{proof}

\begin{prop}[Decompositions of $X(5)$]\label{X(5)}
The genus 2 Lefschetz fibration $X(5)$ which has $n$ irreducible singular fibers and $s$ reducible singular fibers pair $(n,s) = (20,5)$ must decompose under genus 2 fiber sum having the summands of Matsumoto's fibration on $\mathbb{S}^2 \times \mathbb{T}^2 \# 4\CPb$ and the genus 2 Lefschetz fibration on $Z(3)=\CP \# 10\CPb$ or the genus 2 Lefschetz fibration on $\mathbb{S}^2 \times \mathbb{T}^2 \# 3\CPb$ and the genus 2 Lefschetz fibration on $Z(2)=\CP \# 11\CPb$. The $Z(3)$ and $Z(2)$ genus 2 Lefschetz fibration summands are determined up to homeomorphism. The $\mathbb{S}^2 \times \mathbb{T}^2 \# 3\CPb$ and $\mathbb{S}^2 \times \mathbb{T}^2 \# 4\CPb$ genus 2 Lefschetz fibration summands are determined up to diffeomorphism.
\end{prop}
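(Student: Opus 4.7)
The plan is to mirror the strategy used in the proof of Proposition~\ref{X(4)}, namely, to enumerate all $(n,s)$-distributions of the singular fibers that are compatible with the mapping class group constraint, and then apply the known existence/non-existence results together with the characterization propositions to identify which of these enumerated distributions actually occur, and to what they correspond up to diffeomorphism.

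First I would assume a decomposition $X(5)=Y(1)\# Y(2)$ into relatively minimal genus 2 Lefschetz fibrations with $(n_i,s_i)$ the irreducible/reducible fiber pair of $Y(i)$, so that $n_1+n_2=20$, $s_1+s_2=5$, and each summand independently satisfies $n_i+2s_i\equiv 0\pmod{10}$ coming from the abelianization $\Gamma_2^{ab}\cong\Z/10\Z$. Up to relabeling this yields three cases $(s_1,s_2)\in\{(5,0),(4,1),(3,2)\}$, and for each case the congruence condition restricts $(n_1,n_2)$ to a short explicit list.

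Next I would rule out the first two cases. For $(s_1,s_2)=(5,0)$ the congruence forces $(n_1,n_2)\in\{(0,20),(10,10),(20,0)\}$: the first two are excluded because $Y(1)$ has fewer than the minimum seven singular fibers \cite{Ozbagci} or because $(10,0)$ is forbidden by Remark~5.1 of \cite{Sato2}, while $(20,0)$ would force $Y(2)$ to have no singular fibers. For $(s_1,s_2)=(4,1)$ the congruence forces $(n_1,n_2)\in\{(2,18),(12,8)\}$, both of which are excluded, the first by the minimum fiber bound and the second because $(8,1)$ cannot occur by Remark~5.1 of \cite{Sato2}.

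For the surviving case $(s_1,s_2)=(3,2)$, the congruence leaves $(n_1,n_2)\in\{(4,16),(14,6)\}$. In each of these surviving distributions I would apply Proposition~4.1 of \cite{Sato2} to identify the $(4,3)$ and $(6,2)$ summands as diffeomorphic to $\mathbb{S}^{2}\times\mathbb{T}^{2}\#3\CPb$ and $\mathbb{S}^{2}\times\mathbb{T}^{2}\#4\CPb$ respectively, and then use the standard Euler characteristic, signature, and non-spin computations for the $(16,2)$ and $(14,3)$ companion summands to conclude they are homeomorphic to $\CP\#11\CPb$ and $\CP\#10\CPb$, i.e.\ the homeomorphism types of $Z(2)$ and $Z(3)$. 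The step I expect to be least automatic is verifying that no additional distribution sneaks past, and I would double-check the modular arithmetic case by case; the remaining identification of summands is then a direct appeal to the already-established characterization results and to Proposition~4.1 of \cite{Sato2}.
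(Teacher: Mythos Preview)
Your proposal is correct and follows essentially the same case-by-case strategy as the paper: enumerate the possible $(s_1,s_2)$ splits, use $n_i+2s_i\equiv 0\pmod{10}$ to list the admissible $(n_1,n_2)$ pairs, exclude the impossible ones via the minimum-fiber bound and Remark~5.1 of \cite{Sato2}, and identify the surviving small summands via Proposition~4.1 of \cite{Sato2}. The only cosmetic differences are that the paper rules out the $(0,5)$ summand by citing the non-existence of hyperelliptic fibrations with only reducible fibers (rather than the minimum-fiber count you use), and the paper does not bother to list the degenerate $(n_1,n_2)=(20,0)$ case you mention.
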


\begin{proof}
Let us suppose $X(5)$ decomposes into two genus 2 Lefschetz fibrations $X(5)= Y(1) \# Y(2)$ where both $Y(1), Y(2)$ are relatively minimal genus 2 Lefschetz fibrations. There are three possible cases to consider for the distribution of reducible singular fibers and hence determine the possible decompositions up to homeomorphism.

First case is when the five reducible singular fibers distribute wholly to one of the summand (i.e.  $s=(5,0)$) where without the loss of generality, we can assume $Y(1)$ has $(n,s)=(0,5)$ and $Y(2)$ has $(n,s)=(20,0)$. This is impossible as there is no hyperelliptic Lefschetz fibration over $\mathbb{S}^2$ with only reducible singular fibers (cf. \cite{Ozbagci}) whereas $Y(1)$ has 5 reducible singular fibers only. Another possibility is when $Y(1)$ has $(n,s)=(10,5)$ and $Y(2)$ has $(n,s)=(10,0)$. This is impossible by the remark 5.1 of \cite{Sato2}, as we know $(n,s)=(10,0)$ (the $(n,s)$ pair for $Y(2)$) cannot occur as the pair of number of singular fibers for genus 2 Lefschetz fibration. Note that these two decompositions are the only possibility for $s=(5,0)$ since  $n+2s \equiv 0 \pmod{10}$.

Second case is when $s=(4,1)$, where without the loss of generality, we can assume $Y(1)$ has $(n,s)=(12,4)$ and $Y(2)$ has $(n,s)=(8,1)$ this is impossible by the remark 5.1 of \cite{Sato2}, as we know $(n,s)=(8,1)$ (the $(n,s)$ pair for $Y(2)$) cannot occur as the pair of number of singular fibers for genus 2 Lefschetz fibration. 
Another possibility is when $Y(1)$ is has $(n,s)=(2,4)$ and $Y(2)$ has $(n,s)=(18,1)$ This is impossible as $N(2,0)=\{7,8 \}$ (i.e. the minimal number of singular fibers in a genus 2 Lefschetz fibration over $\mathbb{S}^2$ is 7 or 8) \cite{Ozbagci} whereas $Y(1)$ has 6 singular fibers. Note that these two decompositions are the only possibility for $s=(4,1)$ since  $n+2s \equiv 0 \pmod{10}$.

Third case is when $s=(2,3)$, where without the loss of generality, we can assume $Y(1)$ has $(n,s)=(6,2)$ and $Y(2)$ has $(n,s)=(14,3)$ we know then $Y(1)$ is diffeomorphic to genus 2 Lefschetz fibration $\mathbb{S}^2 \times \mathbb{T}^2 \# 4\CPb$ by the proposition 4.1 \cite{Sato2} and $Y(2)$ is homeomorphic to $\CP \# 10\CPb$. Another possibility is when $Y(1)$ has $(n,s)=(4,3)$ and $Y(2)$ has $(n,s)=(16,2)$ then we know $Y(1)$ is diffeomorphic to genus 2 Lefschetz fibration $\mathbb{S}^2 \times \mathbb{T}^2 \# 3\CPb$ by the proposition 4.1 \cite{Sato2} and $Y(2)$ is homeomorphic to $\CP \# 11\CPb$. Note that these two decompositions are the only possibility for $s=(2,3)$ since  $n+2s \equiv 0 \pmod{10}$.

\end{proof}

\begin{prop}[Decompositions of $X(6)$]\label{X(6)}
The genus 2 Lefschetz fibration $X(6)$ which has $n$ irreducible singular fibers and $s$ reducible singular fibers pair $(n,s) = (18,6)$ must decompose under genus 2 fiber sum having the summand of Matsumoto's fibration on $\mathbb{S}^2 \times \mathbb{T}^2 \# 4\CPb$ and the genus 2 Lefschetz fibration on $Z(4)=\CP \# 9\CPb$ or the genus 2 Lefschetz fibration on $\mathbb{S}^2 \times \mathbb{T}^2 \# 3\CPb$ and the genus 2 Lefschetz fibration on $Z(3)=\CP \# 10\CPb$. The $Z(4)$ and $Z(3)$ genus 2 Lefschetz fibration summands are determined up to homeomorphism. The $\mathbb{S}^2 \times \mathbb{T}^2 \# 3\CPb$ and $\mathbb{S}^2 \times \mathbb{T}^2 \# 4\CPb$ genus 2 Lefschetz fibration summands are determined up to diffeomorphism.
\end{prop}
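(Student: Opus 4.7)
The plan is to mimic the case analysis used in the proofs for $X(3), X(4), X(5)$. Assuming a nontrivial decomposition $X(6) = Y(1)\#Y(2)$ into two relatively minimal genus 2 Lefschetz fibrations, with $Y(i)$ having $(n_i,s_i)$ singular fibers, the constraints are $n_1+n_2=18$, $s_1+s_2=6$, together with the divisibility constraint $n_i+2s_i\equiv 0\pmod{10}$ coming from the abelianization $\Gamma_2^{\mathrm{ab}}\cong\mathbb{Z}/10\mathbb{Z}$. I would enumerate the four possible distributions $\{s_1,s_2\}=\{0,6\},\{1,5\},\{2,4\},\{3,3\}$ and for each one list the finitely many candidate $(n_1,n_2)$ pairs.

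For each candidate I would apply the standard obstructions already used in the preceding propositions: Sato's Remark 5.1 which forbids $(n,s)=(10,0)$ and $(n,s)=(8,1)$, Ozbagci's bound $N(2,0)=\{7,8\}$ (no genus 2 Lefschetz fibration over $\mathbb{S}^2$ has fewer than $7$ singular fibers), and the fact that there is no hyperelliptic Lefschetz fibration over $\mathbb{S}^2$ consisting solely of reducible singular fibers. Case $\{0,6\}$ forces one summand to have $(n,s)=(10,0)$ (impossible) or $(0,0)$ (trivial). Case $\{1,5\}$ forces $(8,1)$ (impossible) or $(0,5)$ (impossible). Case $\{2,4\}$ yields either $(2,4)$ on one summand, which has only $6$ singular fibers and is excluded by the Ozbagci bound, or the pair $(12,4)$ and $(6,2)$. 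Case $\{3,3\}$ yields the pair $(14,3)$ and $(4,3)$.

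The two surviving pairs will give precisely the two decompositions stated. I would identify the summand with $(n,s)=(6,2)$ as Matsumoto's fibration on $\mathbb{S}^2\times\mathbb{T}^2\#4\CPb$ up to diffeomorphism using Proposition 4.1 of \cite{Sato2}, and the summand with $(4,3)$ as the genus 2 Lefschetz fibration on $\mathbb{S}^2\times\mathbb{T}^2\#3\CPb$ up to diffeomorphism via the same proposition together with the Baykur--Korkmaz construction \cite{Baykur2}. For the $(12,4)$ and $(14,3)$ summands I compute $(e,\sigma)=(12,-8)$ and $(13,-9)$ respectively, which forces the homeomorphism types $\CP\#9\CPb$ and $\CP\#10\CPb$ (matching $Z(4)$ and $Z(3)$) by an argument analogous to Propositions~\ref{200} and~\ref{181}: one shows $(b_2^+,b_1)=(1,0)$ via the non-minimality table of \cite{Sato1} and then rules out a non-rational minimal model using \cite{Liu1} and \cite{Sato1}. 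Since these $(n,s)$ pairs do not satisfy the hypotheses of Propositions~\ref{200} or \ref{181}, the identification only goes up to homeomorphism, which is why those summands are claimed only to that level.

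The main obstacle is not a single hard step but the need to be exhaustive: the proof is essentially a careful bookkeeping exercise, and the subtlety lies in distinguishing which summands can be pinned down up to diffeomorphism (via Sato's Proposition 4.1, which applies only to small $(n,s)$) versus which can only be pinned down up to homeomorphism, so that the statement about the equivalence class of each summand is stated at exactly the right strength.
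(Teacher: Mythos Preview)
Your proposal is correct and follows essentially the same approach as the paper: the same four-way split on $\{s_1,s_2\}$, the same obstructions (Sato's Remark~5.1 excluding $(10,0)$ and $(8,1)$, Ozbagci's bound excluding $(2,4)$, and the nonexistence of purely reducible fibrations excluding $(0,5)$), and the same identification of the surviving summands via Sato's Proposition~4.1. You actually give more justification than the paper for the homeomorphism types of the $(12,4)$ and $(14,3)$ summands---the paper simply asserts these---though note that for $(n,s)=(12,4)$ one has $c_1^2=0$, so the Taubes step in the template of Propositions~\ref{200} and~\ref{181} does not apply verbatim and that part of your sketch would need a slight adjustment.
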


\begin{proof}
Let us suppose $X(6)$ decomposes into two genus 2 Lefschetz fibrations $X(2)= Y(1) \# Y(2)$ where both $Y(1), Y(2)$ are relatively minimal genus 2 Lefschetz fibrations. There are four possible cases to consider for the distribution of reducible singular fibers and hence determine the possible decompositions up to homeomorphism.

First case is when the six reducible singular fibers distribute wholly to one of the summand (i.e.  $s=(6,0)$) where without the loss of generality, we can assume $Y(1)$ has $(n,s)=(8,6)$ and $Y(2)$ has $(n,s)=(10,0)$. This is impossible by the remark 5.1 of \cite{Sato2}, as we know $(n,s)=(10,0)$ cannot occur as the pair of number of singular fibers for Lefschetz fibration. Note that this decomposition is the only possibility for $s=(6,0)$ since  $n+2s \equiv 0 \pmod{10}$.

Second case is when $s=(5,1)$, where without the loss of generality, we can assume $Y(1)$ has $(n,s)=(0,5)$ and $Y(2)$ has $(n,s)=(18,1)$ this is impossible by the remark 5.1 of \cite{Sato2}, as we know there is no hyperelliptic Lefschetz fibration over $\mathbb{S}^2$ with only reducible singular fibers \cite{Ozbagci} whereas $Y(1)$ has 5 reducible singular fibers only. Another possibility is when $Y(1)$ has $(n,s)=(10,5)$ and $Y(2)$ has $(n,s)=(8,1)$ this is impossible by the remark 5.1 of \cite{Sato2}, as we know $(n,s)=(8,1)$ (the $(n,s)$ pair for $Y(2)$) cannot occur as the pair of number of singular fibers for genus 2 Lefschetz fibration. Note that these two decompositions are the only possibility for $s=(5,1)$ since  $n+2s \equiv 0 \pmod{10}$.

Third case is when $s=(4,2)$, where without the loss of generality, we can assume $Y(1)$ has $(n,s)=(2,4)$ then $Y(2)$ has $(n,s)=(16,2)$ this is impossible as $N(2,0)=\{7,8 \}$ (i.e. the minimal number of singular fibers in a genus 2 Lefschetz fibration over $\mathbb{S}^2$ is 7 or 8) \cite{Ozbagci} whereas $Y(1)$ has 6 singular fibers. Another possibility is when $Y(1)$ has $(n,s)=(12,4)$ and $Y(2)$ has $(n,s)=(6,2)$ then we know $Y(1)$ is homeomorphic to $\CP \# 9\CPb$ and $Y(2)$ is diffeomorphic to genus 2 Lefschetz fibration $\mathbb{S}^2 \times \mathbb{T}^2 \# 4\CPb$ by the proposition 4.1 \cite{Sato2}. Note that these two decompositions are the only possibility for $s=(4,2)$ since  $n+2s \equiv 0 \pmod{10}$.

Fourth case is when $s=(3,3)$, where without the loss of generality, we can assume $Y(1)$ has $(n,s)=(4,3)$ and $Y(2)$ has $(n,s)=(14,3)$ then we know $Y(1)$ is diffeomorphic to genus 2 Lefschetz fibration $\mathbb{S}^2 \times \mathbb{T}^2 \# 3\CPb$ by the proposition 4.1 \cite{Sato2} and $Y(2)$ is homeomorphic to $\CP \# 10\CPb$. Note that this decomposition is the only possibility for $s=(3,3)$ since  $n+2s \equiv 0 \pmod{10}$.
\end{proof}

\rmk Even though one can easily see indecomposability of $X(0)$ and $X(1)$ from non-minimality (cf. \cite{Stipsicz3}) one can also prove $X(0)$ and $X(1)$ are indecomposable under the genus 2 fiber sum by the similar reasoning on the possible pairs of $(n,s)$ for the summands. 

\par As $X(0)$ has 30 irreducible singular fibers $(n,s) = (30,0)$ if it were to decompose into two genus 2 Lefschetz fibrations $X(2)= Y(1) \# Y(2)$ where both $Y(1), Y(2)$ are relatively minimal genus 2 Lefschetz fibrations there is only one possible case of decomposition. Since $n+2s \equiv 0 \pmod{10}$, without the loss of the generality $Y(1)$ has $(n,s) = (10,0)$ and $Y(2)$ has $(n,s) = (20,0)$ this is impossible by the remark 5.1 of \cite{Sato2}, we know $(n,s)=(10,0)$ cannot occur as the pair of number of singular fibers for Lefschetz fibration.

\par Similarly for $X(1)$ which has $(n,s)=(28,1)$ we can consider possible pairs of $(n,s)$ for both $Y(1), Y(2)$. There are only two possible cases to consider namely when $Y(1)$ has $(n,s)=(8,1)$ while $Y(2)$ has $(n,s)=(20,0)$ and another possible case when $Y(1)$ has $(n,s)=(18,1)$ while $Y(2)$ has $(n,s)=(10,0)$. Both cases are impossible by the remark 5.1 of \cite{Sato2}, we know $(n,s)=(10,0)$ and $(n,s)=(8,1)$ cannot occur as the pair of number of singular fibers for Lefschetz fibration and thus such decomposition is impossible.\\

\rmk Similar reasoning on the possible pairs of $(n,s)$ for the summands applies also to the Endo-Gurtas examples such as $Z(m)$ for $0 \leq m \leq 3$ to show indecomposability.  

\par As $Z(0)$ has 20 irreducible singular fibers $(n,s) = (20,0)$ if it were to decompose into two genus 2 Lefschetz fibrations $X(2)= Y(1) \# Y(2)$ where both $Y(1), Y(2)$ are relatively minimal genus 2 Lefschetz fibrations there is only one possible case of decomposition. Since $n+2s \equiv 0 \pmod{10}$, without the loss of the generality $Y(1)$ has $(n,s) = (10,0)$ and $Y(2)$ has $(n,s) = (10,0)$ this is impossible by the remark 5.1 of \cite{Sato2}, we know $(n,s)=(10,0)$ cannot occur as the pair of number of singular fibers for Lefschetz fibration.

\par Similarly for $Z(1)$ which has $(n,s)=(18,1)$ we can consider possible pairs of $(n,s)$ for both $Y(1), Y(2)$. There is only one possible case to consider namely when $Y(1)$ has $(n,s)=(8,1)$ while $Y(2)$ has $(n,s)=(10,0)$ whereas we know both are ruled out of existence by the remark 5.1 of \cite{Sato2}. 

\par As for $Z(2)$ there are only two possible cases to consider namely when $Y(1)$ has $(n,s)=(6,2)$ while $Y(2)$ has $(n,s)=(10,0)$ and another possible case when $Y(1)$ has $(n,s)=(8,1)$ while $Y(2)$ has $(n,s)=(8,1)$. Both cases are impossible as  the remark 5.1 of \cite{Sato2}, we know $(n,s)=(10,0)$ and $(n,s)=(8,1)$ cannot occur as the pair of number of singular fibers for Lefschetz fibration and thus such decomposition is impossible. 

\par Finally for the $Z(3)$, there are again only two possible cases to consider namely when $Y(1)$ has $(n,s)=(4,3)$ while $Y(2)$ has $(n,s)=(10,0)$ and another possible case when $Y(1)$ has $(n,s)=(16,2)$ while $Y(2)$ has $(n,s)=(8,1)$. Both cases are again impossible by the remark 5.1 of \cite{Sato2}, we know $(n,s)=(10,0)$ and $(n,s)=(8,1)$ cannot occur as the pair of number of singular fibers for Lefschetz fibration and thus such decomposition is impossible.\\

\par Interestingly, it is impossible to rule out the decomposability of $Z(4)$ as suggested by Endo-Gurtas,

\begin{prop}[Decompositions of $Z(4)$]\label{Z(4)}
$Z(4)$ which has $n$ irreducible singular fibers and $s$ reducible singular fibers pair $(n,s) = (12,4)$ if it were to decompose it must decompose under genus 2 fiber sum having the indecomposable summands of Matsumoto's fibration on $\mathbb{S}^2 \times \mathbb{T}^2 \# 4\CPb$. The summands are determined up to diffeomorphism.
\end{prop}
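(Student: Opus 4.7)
The plan is to replicate the numerical case analysis used for $X(3)$ through $X(6)$, now applied to the singular fiber pair $(n,s) = (12,4)$ of $Z(4)$. Suppose $Z(4) = Y(1) \# Y(2)$ with both $Y(i)$ relatively minimal genus $2$ Lefschetz fibrations, and write $(n_i, s_i)$ for the numbers of irreducible and reducible singular fibers of $Y(i)$. Then $n_1 + n_2 = 12$, $s_1 + s_2 = 4$, and each pair satisfies the divisibility constraint $n_i + 2 s_i \equiv 0 \pmod{10}$ coming from $\Gamma_2^{\mathrm{ab}} \cong \Z/10\Z$. Without loss of generality $s_1 \geq s_2$, so the three cases to check are $(s_1, s_2) \in \{(4,0),(3,1),(2,2)\}$.

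In the case $s=(4,0)$, the divisibility constraint forces $n_1 \equiv 2$ and $n_2 \equiv 0 \pmod{10}$, and the only nontrivial possibility compatible with $n_1 + n_2 = 12$ is $(n_1, n_2) = (2, 10)$. But $(n_1, s_1) = (2, 4)$ gives only $6$ singular fibers in $Y(1)$, contradicting the lower bound $N(2,0) \geq 7$ of \cite{Ozbagci}, while $(n_2, s_2) = (10, 0)$ is excluded by Remark 5.1 of \cite{Sato2}. In the case $s = (3, 1)$, the constraint forces $(n_1, n_2) = (4, 8)$, and $(n_2, s_2) = (8, 1)$ is again excluded by Remark 5.1 of \cite{Sato2}. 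Only the case $s = (2, 2)$ survives, in which the unique solution compatible with $n_1 + n_2 = 12$ is $(n_1, s_1) = (n_2, s_2) = (6, 2)$.

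By Proposition 4.1 of \cite{Sato2}, every relatively minimal genus $2$ Lefschetz fibration with $(n,s) = (6,2)$ is diffeomorphic to Matsumoto's fibration on $\mathbb{S}^2 \times \mathbb{T}^2 \# 4\CPb$, so the only numerically admissible genus $2$ fiber sum decomposition of $Z(4)$ is a self sum of two copies of Matsumoto's fibration, with both summands determined up to diffeomorphism, which is exactly the claim. The main obstacle is that this purely numerical argument gives no information about whether such a decomposition actually exists; the monodromy factorization for $Z(4)$ written out in Lemma~\ref{fourl} does not visibly split as a concatenation of two Matsumoto relators, and settling the remaining dichotomy would require either exhibiting an explicit sequence of Hurwitz moves and conjugations that separates the global monodromy of $Z(4)$ into two Matsumoto blocks with compatible gluing, or computing a diffeomorphism invariant that distinguishes $Z(4)$ from the untwisted self sum of Matsumoto's fibration.
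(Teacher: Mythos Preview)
Your proof is correct and follows essentially the same approach as the paper: the identical three-case split on $(s_1,s_2)\in\{(4,0),(3,1),(2,2)\}$, the same exclusions via the minimal singular fiber count from \cite{Ozbagci} and Remark~5.1 of \cite{Sato2}, and the same appeal to Proposition~4.1 of \cite{Sato2} in the surviving case. Your closing caveat about the argument not establishing actual decomposability also matches the paper's own remark immediately following the proof.
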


\begin{proof}
Let us suppose $Z(4)$ decomposes into two genus 2 Lefschetz fibrations $X(2)= Y(1) \# Y(2)$ where both $Y(1), Y(2)$ are relatively minimal genus 2 Lefschetz fibrations. There are three possible cases to consider for the distribution of reducible singular fibers and hence determine the possible decompositions up to diffeomorphism.

First case is when the four reducible singular fibers distribute wholly to one of the summand (i.e.  $s=(4,0)$) where without the loss of generality, we can assume $Y(1)$ has $(n,s)=(2,4)$ and $Y(2)$ has $(n,s)=(10,0)$. This is impossible as $N(2,0)=\{7,8 \}$ (i.e. the minimal number of singular fibers in a genus 2 Lefschetz fibration over $\mathbb{S}^2$ is 7 or 8) \cite{Ozbagci} whereas $Y(1)$ has 6 singular fibers. It is also impossible by the remark 5.1 of \cite{Sato2}, as we know $(n,s)=(10,0)$ (the $(n,s)$ pair for $Y(2)$) cannot occur as the pair of number of singular fibers for genus 2 Lefschetz fibration. Note that this is the only possible decomposition case to consider for $s=(4,0)$ since  $n+2s \equiv 0 \pmod{10}$.

Second case is when $s=(3,1)$, where without the loss of generality, we can assume $Y(1)$ has $(n,s)=(4,3)$ and $Y(2)$ has $(n,s)=(8,1)$ this is impossible by the remark 5.1 of \cite{Sato2}, as we know $(n,s)=(8,1)$ (the $(n,s)$ pair for $Y(2)$) cannot occur as the pair of number of singular fibers for genus 2 Lefschetz fibration. Note that this is the only possible decomposition case to consider for $s=(3,1)$ since  $n+2s \equiv 0 \pmod{10}$.

Third case is when $s=(2,2)$, where without the loss of generality, we can assume $Y(1)$ has $(n,s)=(6,2)$ and $Y(2)$ has $(n,s)=(6,2)$ we know then $Y(1)$ and $Y(2)$ must be diffeomorphic to genus 2 Lefschetz fibration $\mathbb{S}^2 \times \mathbb{T}^2 \# 4\CPb$ by the proposition 4.1 \cite{Sato2}.
\end{proof}

\par As it is still not known whether or not $Z(4)$ in our article or $E$ in Endo-Gurtas are actually decomposable into the two genus 2 Lefschetz fibrations to begin with this decomposition result alone does not fully answer the question asked by Endo-Gurtas \cite{EG}.

\section*{Acknowledgments} I am grateful to Anar Akhmedov and Refik \.{I}nan\c{c} Baykur for suggesting this problem and for many useful discussions and ideas. I am also grateful to Tian-Jun Li, Andr\'{a}s I. Stipsicz and Chuen-Ming Michael Wong for helpful conversations.

\bibliographystyle{mrl}

\end{document}